\def\widebreve{\mathpalette\wide@breve}
\def\wide@breve#1#2{\sbox\z@{$#1#2$}%
	\mathop{\vbox{\m@th\ialign{##\crcr
				\kern0.08em\brevefill#1{0.8\wd\z@}\crcr\noalign{\nointerlineskip}%
				$\hss#1#2\hss$\crcr}}}\limits}
\def\brevefill#1#2{$\m@th\sbox\tw@{$#1($}%
	\hss\resizebox{#2}{\wd\tw@}{\rotatebox[origin=c]{90}{\upshape(}}\hss$}
\def\vec#1{{\bf #1}}
\newcommand{\figref}[1]{Fig.~\ref{#1}}
\newtheorem{theorem}{Theorem}[section]
\newtheorem{proposition}{Proposition}[section]
\newtheorem{lemma}{Lemma}[section]
\newtheorem{remark}{Remark}[section]
\newtheorem{expl}{Example}[section]
\newtheorem{algo}{Algorithm}[section]
\providecommand{\keywords}[1]
{
	\small
	\textbf{\textit{Keywords:}} #1
}
\title{{\large \bf A Physical-Constraint-Preserving Finite Volume WENO Method for Special Relativistic Hydrodynamics on Unstructured Meshes}\footnote{The work of Y.~Chen is partially supported by National Natural Science Foundation of China (grant No.~11901460). The work of K.~Wu is partially supported by National Natural Science Foundation of China (grant No.~12171227).}} 
\author{
	Yaping Chen\thanks{
		School of Mathematics and Statistics,
		Xi’an Key Laboratory of Scientific Computation and Applied Statistics, NPU-UoG International Cooperative Lab for Computation and Application in Cardiology,
		Northwestern Polytechnical University, Xi’an 710129, Shaanxi
		Province, P.R.~China.  ({\tt ypchen@nwpu.edu.cn}). },
	\quad 
	Kailiang Wu\thanks{Corresponding Author. Department of Mathematics \& SUSTech International Center for Mathematics, Southern University of Science and Technology, and National Center for Applied Mathematics Shenzhen (NCAMS), Shenzhen, Guangdong 518055, China.  ({\tt wukl@sustech.edu.cn}).  }
}
\date{}
\begin{document}

	\maketitle

	\vspace{-3mm}

	\begin{abstract}
		This paper presents a highly robust third-order accurate finite volume weighted essentially non-oscillatory (WENO) method for special relativistic hydrodynamics on unstructured triangular meshes. We rigorously prove that the proposed method 
		is physical-constraint-preserving (PCP), namely, 
		always preserves the positivity of the pressure and the rest-mass density as well as the subluminal constraint on the fluid velocity. The method is built on a highly efficient compact WENO reconstruction on unstructured meshes, a simple PCP limiter, the provably 
		PCP 
		 property of the Harten--Lax--van Leer flux, and third-order strong-stability-preserving time discretization. Due to the relativistic effects, the primitive variables (namely, the rest-mass density, velocity, and pressure) are highly nonlinear implicit functions in terms of the  conservative variables, making the design and analysis of our method nontrivial. To address the difficulties arising from the strong nonlinearity, we adopt a novel quasilinear technique for the theoretical proof of the PCP property. 
		Three provable convergence-guaranteed iterative algorithms  are also introduced for the robust recovery of primitive quantities from admissible conservative variables. 
		We also propose a slight modification to an existing WENO reconstruction 
		to ensure the scaling invariance of the nonlinear weights and thus to 
		accommodate the homogeneity of the evolution operator, leading to the advantages of the modified WENO reconstruction in  resolving multi-scale wave structures.     
		Extensive  
		numerical examples are presented to demonstrate the robustness, expected accuracy, and high resolution of the proposed method.

		\vspace{2mm}
		\noindent
		\keywords{Physical-constraint-preserving,
			special relativistic hydrodynamics, WENO,
			finite volume, high-order accuracy, unstructured mesh}

	\end{abstract}


	\newpage

	\section{Introduction}
\label{sec:intro}

Relativistic hydrodynamics (RHD) plays a key role in high-energy astrophysical phenomena and laboratory plasma experiments when the fluid moves close to the speed of light
	or/and 
its internal energy is comparable to the rest-mass density. 
The strong nonlinearity of the RHD system makes it extremely difficult to obtain its analytical solutions. Hence numerical simulation has become a powerful and primary tool to solve and understand it.  
In the past several decades, many high-resolution and high-order accurate numerical methods have been developed for the RHD equations, including but not limited to 
finite volume methods  (e.g.~\cite{mignone2005hllc,tchekhovskoy2007wham,BalsaraKim2016,chen2021second}), finite difference methods (e.g.~\cite{dolezal1995,del2002efficient,radice2012thc,WuTang2015}), and discontinuous Galerkin (DG) methods (e.g.~\cite{radice2011discontinuous,zhao2013runge,KIDDER201784,teukolsky2016formulation}). 
Adaptive mesh refinement \cite{2006Zhang} and adaptive moving mesh \cite{he2012adaptive1} were used to further improve the resolution of discontinuities and complicated RHD flow structures. 
The interested readers are referred to the review  \cite{marti2003numerical,Marti2015}, the textbook  \cite{rezzolla2013relativistic}, and a limited list of some recent works \cite{endeve2019thornado,WuShu2019SISC,marquina2019capturing,mewes2020numerical} as well as references therein.

Most of the existing numerical schemes for RHD equations were designed on structured meshes, and there are only a few works \cite{dumbser2009very, duffell2011tess} done on unstructured meshes which are highly desirable for some  applications in problems with complex geometries. 
We are interested in developing robust high-order finite volume schemes for RHD on unstructured meshes. 
An important building block for high-order finite volume methods is the reconstruction of the variables inside the computational cell from the cell averages. For example, 
the well-known weighted essentially non-oscillatory (WENO)
reconstruction \cite{liu1994weighted,jiang1996},  stemming from the essentially non-oscillatory (ENO) reconstruction \cite{HARTEN1987231}, has become one of the most popular reconstruction techniques on structured meshes. 
The strategies of ENO or WENO reconstructions were extended to unstructured triangular meshes in, for example, \cite{abgrall1994,hu1999,liu2013robust}. 
To make the  
order of WENO accuracy higher than that of the reconstruction on each smaller stencil, the linear weights 
in the classic WENO reconstruction should be carefully designed \cite{hu1999}. 
This requirement makes the extension and implementation of the classic WENO reconstruction 
on unstructured meshes difficult and complicated, 
as the desired optimal linear weights depend on the quadrature points and the topological  structure of the mesh,  and moreover, the linear weights could easily become negative which need some special treatment \cite{shi2002technique}.  
Such difficulty may be avoided if one decreases the WENO  
accuracy order 
on the combined large stencil to
the highest accuracy order among the small candidate stencils  
(see, e.g., \cite{friedrich1998weighted,dumbser2007arbitrary,Dumbser2007,zhu2016new,balsara2020}), 
so that the linear weights could be chosen equally or even rather arbitrarily as long as their summation equals one. 
Recently, Zhu and Qiu \cite{zhu2016new,zhu2017new} proposed a new type of WENO reconstruction, 
which is based on a combination of a high degree polynomial 
with several linear polynomials. 
This new WENO reconstruction is highly compact and efficient, and its linear weights can also be chosen arbitrarily. 
As a result, it has been easily extended to two-dimensional (2D) triangular meshes \cite{zhu2018new} and three-dimensional (3D) tetrahedral meshes \cite{zhu2017tetrahedral}. 
More recently, the multi-resolution WENO schemes 
with similar linear weights were proposed in \cite{zhu2018MRWENO}.

Although these WENO schemes are stable and robust in many numerical experiments, they may fail to simulate ultra-relativistic flows with large Lorentz factor (high speed), 
low pressure, low density, and/or strong discontinuities. 
A major cause of the failure is the violation of the intrinsic physical constraints, namely, the positivity of the pressure and the rest-mass density as well as the subluminal constraint on the fluid velocity. 
In fact, if any of these constraints are numerically violated, the corresponding discrete
problem could become ill-posed as the hyperbolicity of the system is lost, which may finally lead to numerical instability or blowup of the code. 
It is therefore necessary to develop physical-constraint-preserving (PCP) numerical methods. 
In the past decade, two types of limiters were developed for constructing 
high-order bound-preserving type schemes for hyperbolic conservation laws. 
One is the simple scaling limiting procedure, which was first proposed by 
Zhang and Shu for 
scalar conservation laws \cite{zhang2010} and the compressible Euler equations \cite{zhang2010b} on structured rectangular meshes, and later extended to unstructured  triangular meshes \cite{zhang2012}. 
Another type is the flux-correction limiting procedure; see e.g.~\cite{xu2014parametrized,Hu2013,Xiong2016}. 
 We also refer the interested reader to recent thorough reviews in \cite{XuZhang2017,Shu2018},  and some recent works \cite{qin2018implicit,Wu2017a,WuShu2019} as well as references therein.  
These limiting techniques were also generalized to achieve 
PCP schemes for RHD. 
The first PCP work for RHD was done in \cite{WuTang2015}, where an explicit form of the 
admissible state set was established, 
the local Lax--Friedrichs scheme was rigorously proven to be PCP, and 
high-order PCP finite difference WENO schemes were proposed with flux-correction PCP  limiters. 
Bound-preserving DG methods were designed for the special RHD by Qin, Shu, and Yang in \cite{QinShu2016} with a provable $L^1$-stability. 
The PCP Lagrangian finite volume schemes with the HLLC flux were later developed in \cite{LingDuanTang2019}. 
Recently, a minimum principle on specific entropy and high-order accurate invariant region preserving numerical methods 
were proposed in \cite{WuMEP2021} for the special RHD. 
These works were focused on the special RHD system with an ideal equation of state. 
PCP central DG schemes were constructed in \cite{WuTang2017ApJS}, where a general equation of state was considered. 
Frameworks of designing high-order PCP methods were established in \cite{Wu2017} for  general RHD.  
The design and analysis of PCP schemes were carried out in 
 \cite{WuTangM3AS,wu2021provably} for the relativistic magnetohydrodynamics (MHD), 
which extended the positivity-preserving non-relativistic MHD schemes \cite{Wu2017a,WuShu2018,WuShu2019}. 
The analysis revealed for the first time that 
the PCP property of MHD schemes is strongly connected with a discrete divergence-free condition on the magnetic field \cite{WuTangM3AS,Wu2017a}. 
  Besides, a flux limiter was proposed in \cite{radice2014high} to enforce the positivity of the rest-mass density, and 
  a subluminal reconstruction was developed in \cite{BalsaraKim2016} to 
  ensure the subluminal bound of the fluid velocity.

The aim of this paper is to construct, analyze, and implement a robust PCP third-order finite volume method for the special RHD on unstructured triangular meshes. 
A distinctive feature of the proposed method lies in its desirable non-oscillatory property, homogeneity, and provably PCP property. 
	To achieve this goal, we will make the following efforts in this work.
	\begin{itemize} 
		\item  Due to relativistic effects, neither 
		 the primitive quantities nor the flux can be explicitly expressed by the conservative variables. 
		 This makes the design and analysis of the PCP schemes nontrivial in the RHD case. 
		To address the difficulties arising from the strong nonlinearity, we adopt a novel quasilinear technique to theoretically prove the PCP property of our method. This technique was named Geometric Quasi-Linearization (GQL) \cite{Wu2021GQL} due to its intrinsic geometric meaning; see the general GQL framework recently established in \cite{Wu2021GQL}.
		\item 
		Due to the nonlinear implicit mappings from 
		the conservative variables to the primitive quantities and flux, 
		 it requires to solve a nonlinear algebraic equation to recover the corresponding primitive quantities from the conservative variables in numerical computations. 
		 We present 
		 three provable convergence-guaranteed algorithms for the robust recovery of physically admissible primitive quantities.		
		 \item To achieve high-order accuracy in spatial discretization, we employ the compact and efficient WENO reconstruction proposed in \cite{zhu2018new} on triangular meshes.  
		 It will be observed that the nonlinear weights used in  \cite{zhu2018new} do not satisfy certain  scaling-invariance property, which seems important for simulating multi-scale problems 
		 (see Examples \ref{1Driemann3} and \ref{1DMSP} of this paper).   
		 We propose a slight modification to the nonlinear weights to 
		 ensure its scaling invariance and to accommodate the homogeneity of the evolution operator. 
		 The modified WENO reconstruction will be shown to be advantageous in resolving multi-scale wave structures. 
		\item To validate the robustness, accuracy, and effectiveness of our method, 
		we conduct extensive numerical tests on unstructured triangular meshes. 
		It will be shown that our PCP scheme is capable of simulating benchmark problems and more   
		challenging problems in regular and irregular domains successfully, such as a relativistic forward-facing step problem with initial velocities $v_1=0.999$ and shock-vortex interaction problems involving the low pressure and density of $1.78\times10^{-20}$ and $7.8\times10^{-15}$, respectively.
	\end{itemize}

This paper is organized as follows. We will introduce the governing equations of special RHD in Section \ref{sec:GovenEqn}. Section
\ref{Sec:Numerical_method} presents high-order PCP finite volume method for the RHD equations, including the outline and key ingredients of our method in Subsection \ref{sec:outline}, high-order characteristic WENO reconstruction on unstructured triangular mesh with a modification of the nonlinear weights to be scaling invariant in Subsection \ref{sec:RCST}, 
a PCP limiting operator in Subsection \ref{sec:constraint-preserving}, three convergence-guaranteed algorithms for primitive variables recovery in Subsection \ref{sec:recovery}, and the rigorous proof of the PCP property in Subsection \ref{sec:CPP}. 
In Section \ref{sec:examples}, we provide extensive one-dimensional (1D) and 2D numerical tests 
on demanding RHD problems to validate the effectiveness of our method. Section \ref{sec:conclusions} concludes the paper.

	\section{Governing equations of special relativistic hydrodynamics}
	\label{sec:GovenEqn}
	The equations governing RHD can be formulated in the covariant form as
		\begin{equation}\label{eq:covariant}
		\begin{cases}
			\partial_{\alpha}(\rho u^{\alpha}) = 0,\\
			\partial_{\alpha} T^{\alpha\beta}= 0,
		\end{cases}
	\end{equation}
	which describe the conservation laws of the
	baryon number density and the stress-energy tensor $T^{\alpha\beta}$. Here $\rho$ represents
	the rest-mass density,  $u^{\alpha}$ stands for the four-velocities, and $\partial_{\alpha}=\partial_{x^{\alpha}}$ is the covariant derivative.
	We have employed in \eqref{eq:covariant} the Einstein summation convention over the repeated index $\alpha$, with the Greek indices running from 0 to 3.
	For an ideal fluid the stress-energy tensor takes the form of 
	$$T^{\alpha\beta}=\rho H u^{\alpha}u^{\beta}+pg^{\alpha\beta},$$
	where $p$ is the pressure, $H=1+e+\frac{p}{\rho}$ denotes the specific enthalpy,
	 $e$ represents the specific internal energy, and the
	 geometrized unit system is used so that the speed of light $c$ in
	 vacuum equals one. Equations \eqref{eq:covariant} are closed by an equation of state, e.g., $e=e(p,\rho)$. In this paper, we focus on the ideal equation of state, which reads
	 \begin{equation}\label{EOS}
	 	 e = \frac{p}{ (\Gamma-1)\rho }
	 \end{equation}
	 with the constant $\Gamma \in (1,2]$ being the ratio of specific heats; the restriction $\Gamma \le 2$ is required by compressibility assumptions and the relativistic causality (cf.~\cite{WuTang2015}).

	For the special relativity, the spacetime metric
	 $(g^{\alpha\beta})_{4\times4}$ is Minkowski's tensor $\text{diag}\{-1,1,1,1\}$. The four-dimensional space-time coordinates become $(x^{\alpha})=(t,x_1,x_2,x_3)^\top$, and the four-velocities become $(u^{\alpha})=\gamma(1,v_1,v_2,v_3)^\top$, where
	$\gamma=1/\sqrt{1-\|\bm{v}\|^2}$  is the Lorentz factor with $\| \cdot \|$ denoting $2$-norm of the fluid velocity vector ${\bm v}=(v_1,v_2,v_3)$.
	Thus, in the special RHD case, system \eqref{eq:covariant} can be rewritten as
	\begin{equation}\label{eq:RHD3D}
	\frac{\partial \vec{U}}{\partial t} + \sum^{3}_{i=1}\frac{\partial\vec F_{i}(\vec{U})}{\partial x_i} = {\bf 0},
\end{equation}
where the conservative vector ${\bf U}$ and the fluxes, $\vec F_{i}$, $1\le i \le 3$, are defined by
	\begin{align}\label{eq:UF1}
		& {\bf U} = (D, m_1, m_2, m_3, E )^\top,
		& {\bf F}_1= (D v_1, m_1 v_1 + p, m_2 v_1, m_3 v_1, m_1)^\top,
		\\ \label{eq:UF2}
		& {\bf F}_2= (D v_2, m_1 v_2 , m_2 v_2+p, m_3 v_2, m_2)^\top,
		&{\bf F}_3= (D v_3, m_1 v_3 , m_2 v_3, m_3 v_3+p, m_3)^\top,
	\end{align}
with
\begin{equation}\label{eq:UU}
D=\rho \gamma, \qquad m_i=\rho H \gamma^2 v_i, \qquad E=\rho H \gamma^2 -p
\end{equation}
denoting the mass density, momentum in $x_i$-direction, and energy, respectively.

From equations \eqref{eq:UF1}--\eqref{eq:UU}, we see that
the conservative vector ${\bf U}$ and the fluxes $\vec F_{i}$
can be explicitly expressed by using the primitive quantities
${\bf W}:=(\rho,{\bm v}, p)^\top$
in the local rest frame.
However, unlike the non-relativistic case, for RHD there are no explicit
expressions for either the fluxes $\vec F_{i}$ or the primitive vector
${\bf W}$ in terms of the conservative variables ${\bf U}$.
This poses more additional challenges for the numerical simulations of the RHD than that for the non-relativistic case.
In practice, in order to evaluate the flux ${\bf F}_i({\bf U})$ in the computations, we have to first
recover the primitive quantities ${\bf W}$ from the
conservative vector ${\bf U}$ by performing the inverse transformation
of \eqref{eq:UU}, within every mesh cell and at each time step.
Given a conservative vector ${\bf U}=(D, {\bm m}, E)^\top$, we can get the values of the corresponding $\{p({\bf U}), {\bm v} ({\bf U}), \rho ({\bf U}) \}$ as follows:
first numerically solve a nonlinear algebraic equation \cite{WuTang2015}
\begin{equation}\label{eq:Eqforp}
	\Phi_{ \bf U}  ( p ) := \frac{p}{\Gamma - 1} - E + 
	\frac{ \| {\bm m} \|^2 }{E+p}
	+ D \sqrt{  1 - \frac{ \| {\bm m} \|^2 }{ (E+p)^2 }  } =0, \qquad p \in [0, + \infty),
\end{equation}
by utilizing a root-finding algorithm to obtain the pressure $p({\bf U})$; then calculate the velocity and rest-mass density by
\begin{equation}\label{eq:vUdU}
{\bm v} ({\bf U}) =
{ \bm m }/{\big(E + p ({\bf U})\big)}, \quad \rho( {\bf U} ) = D \sqrt{ 1 - \left \| {\bm v} ({\bf U}) \right \|^2 }.
\end{equation}
We denote the above recovery procedure by the operator ${\bm {\mathcal W}}: {\bf U}\rightarrow {\bf W}$, namely,
$${\bf W} = {\bm {\mathcal W}}({\bf U})
= \Big( \rho ({\bf U}), {\bm v}({\bf U}), p ({\bf U}) \Big)^\top.$$
Let $\vec{f}_i ({\bf W})$ denote the flux $\vec{F}_i$ that is expressed as a vector function of
the primitive variables ${\bf W}$.
Then the flux function $\vec{F}_i({\bf U})$ in terms of the conservative variables ${\bf U}$ can be
expressed by
\begin{equation*}
{\bf F}_i (\vec{U}) = \vec{f}_i ( {\bm {\mathcal W}}  (\vec{U}) ) = \vec{f}_i\circ {\bm {\mathcal W}}  (\vec{U}), \qquad 1\le i \le 3.
\end{equation*}

In the following, we shall restrict our attention to the special RHD system
in the two space dimensions:
\begin{equation}\label{eq:RHD}
	\frac{\partial \vec{U}}{\partial t} + \frac{\partial\vec F_{1}(\vec{U})}{\partial x} + \frac{\partial\vec F_{2}(\vec{U})}{\partial y} = {\bf 0},
\end{equation}
where $(x,y)$ represents the spatial coordinates; the conservative vector $\vec{U}$ and the fluxes $\vec F_{i}$ reduce to
	\begin{equation*}
	 {\bf U} = (D, m_1, m_2, E )^\top,\quad~
	 {\bf F}_1= (D v_1, m_1 v_1 + p, m_2 v_1, m_1)^\top,\quad~
	 {\bf F}_2= (D v_2, m_1 v_2 , m_2 v_2+p, m_2)^\top.
\end{equation*}
In physics, the rest-mass density $\rho$ and pressure $p$ should be positive,
and, as required by the relativistic causality, the fluid velocity magnitude $\| {\bm v} \|$ must not
exceed the speed of light $c$.
That is, the primitive vector must stay in the following set
\begin{equation}\label{Gw}
	G_w := \big \{\vec{W}=(\rho,{\bm v},p)^\top \in \mathbb{R}^{4}:~\rho>0,~p>0,~\|{\bm v}\|<c=1 \big\},
\end{equation}
where the speed of light in vacuum $c=1$ as we employed the geometrized unit system.  
Accordingly, the conservative vector $\vec{U}$ must satisfy the following constraints
	\begin{equation} \label{Gu}
	G_u := \Big\{\vec{U}=(D,{\bm m},E)^\top \in \mathbb{R}^{4}:~\rho(\vec{U})>0,~p(\vec{U})>0,~\|{\bm v}(\vec{U})\|<1 \Big\}.
\end{equation}
The functions $\rho(\vec{U}), p(\vec{U})$ and $\bm{v}(\vec{U})$ in \eqref{Gu} are highly nonlinear and have no explicit expressions, as defined by \eqref{eq:Eqforp} and \eqref{eq:vUdU}. This makes the studies on PCP numerical methods for RHD nontrivial.

We refer to $G_u$ as the physically admissible state set.
The following two properties of $G_{\bf u}$ were rigorously proven in \cite{WuTang2015}.

\begin{lemma}
The admissible state set $G_u$ is a convex set.
\end{lemma}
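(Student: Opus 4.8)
The plan is to bypass the implicit, analytically intractable functions $\rho(\vec U)$, $p(\vec U)$, and $\bm v(\vec U)$ appearing in \eqref{Gu} by first deriving an \emph{explicit} equivalent characterization of $G_u$ purely in terms of the conservative components $(D,\bm m,E)$. Concretely, I would prove the identity
\begin{equation*}
	G_u = \Big\{ \vec U=(D,\bm m,E)^\top\in\mathbb R^4 :~ D>0,~~ q(\vec U):= E-\sqrt{D^2+\|\bm m\|^2}>0 \Big\}.
\end{equation*}
Granting this, the convexity conclusion is immediate and is the easy part: the set $\{D>0\}$ is an open half-space and hence convex; the map $(D,\bm m)\mapsto\sqrt{D^2+\|\bm m\|^2}$ is a Euclidean norm and therefore convex, so $q$ is the sum of the linear functional $\vec U\mapsto E$ and a concave function and is itself concave; consequently its strict superlevel set $\{q>0\}$ is convex. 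Since $G_u$ is the intersection of these two convex sets, it is convex.

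The substance of the argument thus lies entirely in establishing the displayed characterization, where the strong nonlinearity of the recovery map $\bm{\mathcal W}$ must be confronted. First I would study the scalar equation $\Phi_{\vec U}(p)=0$ of \eqref{eq:Eqforp}. Using the causality restriction $\Gamma\le2$ (so that $\tfrac1{\Gamma-1}\ge1$) together with $\|\bm m\|<E+p$, one verifies that $\Phi_{\vec U}'(p)>0$ on $[0,+\infty)$, whence $\Phi_{\vec U}$ is strictly increasing with $\Phi_{\vec U}(p)\to+\infty$ as $p\to+\infty$; therefore a positive root exists if and only if $\Phi_{\vec U}(0)<0$. A short computation reduces $\Phi_{\vec U}(0)$ to
\begin{equation*}
	\Phi_{\vec U}(0)=\frac{\sqrt{E^2-\|\bm m\|^2}}{E}\Big(D-\sqrt{E^2-\|\bm m\|^2}\Big),
\end{equation*}
whose sign is controlled exactly by the sign of $E^2-\|\bm m\|^2-D^2$, i.e.\ by the sign of $q$. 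This delivers the inclusion $\{D>0,\,q>0\}\subseteq G_u$: if $D>0$ and $q>0$ then $\Phi_{\vec U}(0)<0$, so the recovered pressure satisfies $p(\vec U)>0$, and \eqref{eq:vUdU} combined with $\|\bm m\|<E$ forces $\|\bm v(\vec U)\|<1$ and $\rho(\vec U)>0$.

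For the reverse inclusion $G_u\subseteq\{D>0,\,q>0\}$, I would start from an admissible primitive state and substitute the ideal-gas relation $H=1+\tfrac{\Gamma}{\Gamma-1}\tfrac p\rho$, implied by \eqref{EOS}, into $D=\rho\gamma$, $\bm m=\rho H\gamma^2\bm v$, $E=\rho H\gamma^2-p$. Since $E>0$ here, $q>0$ is equivalent to $E^2-D^2-\|\bm m\|^2>0$, and this target reduces to the positivity of a quadratic in $p$ whose coefficients turn out to be nonnegative precisely because $\tfrac{\Gamma}{\Gamma-1}\ge2$ under $\Gamma\le2$; hence the inequality holds at every state of $G_w$. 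I expect this algebraic reduction, and in particular tracking where the restriction $\Gamma\le2$ is genuinely needed, to be the main obstacle, while the convexity statement itself then follows in one line from the concavity of $q$.
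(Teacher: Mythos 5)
Your proposal is correct and follows essentially the same route the paper relies on: the paper does not reprove this lemma but cites \cite{WuTang2015}, and its Lemma~2.2 records exactly your key step---the equivalence of $G_u$ with $G_u^{(1)}=\{D>0,\ E-\sqrt{D^2+\|\bm m\|^2}>0\}$ together with the concavity of $g(\vec U)=E-\sqrt{D^2+\|\bm m\|^2}$---from which convexity is immediate as the intersection of a half-space with a superlevel set of a concave function. Your supporting analysis of $\Phi_{\vec U}$ (strict monotonicity, the sign of $\Phi_{\vec U}(0)$, and the algebraic verification of the reverse inclusion using $\Gamma\le 2$) matches the computations the paper itself carries out in Section~3.4 and the cited reference.
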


\begin{lemma}\label{lem:Gu1}
	The admissible state set $G_u$ is exactly equivalent to the following set
\begin{equation} \label{Gu1}
	G_u^{(1)} := \Big \{\vec{U}=(D,{\bm m},E)^\top \in \mathbb{R}^{4}:~D>0,~ g(\vec{U})>0 \Big \},
\end{equation}
where the function $g(\vec{U}):=E-\sqrt{D^2+\|{\bm m}\|^2}$ is a concave function. 
\end{lemma}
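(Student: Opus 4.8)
The plan is to prove the two set inclusions $G_u\subseteq G_u^{(1)}$ and $G_u^{(1)}\subseteq G_u$ separately, and to dispatch the concavity of $g$ at the outset. Concavity is immediate: $(D,\bm m)\mapsto\sqrt{D^2+\|\bm m\|^2}$ is the Euclidean norm of $(D,\bm m)$ and hence convex, so $-\sqrt{D^2+\|\bm m\|^2}$ is concave, and adding the linear term $E$ preserves concavity. The substance of the lemma is therefore the equivalence of the two descriptions of the admissible set.

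For the inclusion $G_u\subseteq G_u^{(1)}$, I would take $\vec U$ with $\rho>0$, $p>0$, $\|\bm v\|<1$. Positivity of $D=\rho\gamma$ is immediate from $\rho>0$ and $\gamma\ge1$. It remains to show $g(\vec U)>0$, i.e.\ $E>\sqrt{D^2+\|\bm m\|^2}$; since $E=\rho H\gamma^2-p$ with $H\ge1$ one first checks $E>0$ (writing $E=\rho\gamma^2+\rho\gamma^2 e+p(\gamma^2-1)$ makes each summand nonnegative and the first strictly positive), so it suffices to prove $E^2>D^2+\|\bm m\|^2$. I would substitute the defining relations \eqref{eq:UU}, use the identity $\gamma^4(1-\|\bm v\|^2)=\gamma^2$ to eliminate $\|\bm v\|$, and invoke the ideal equation of state \eqref{EOS} in the form $\rho(H-1)=\tfrac{\Gamma}{\Gamma-1}p$. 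This reduces $E^2-D^2-\|\bm m\|^2$ to an expression in $\rho,p,\gamma,\Gamma$ that I expect to collect into a sum of two manifestly nonnegative terms, one proportional to $\rho p\gamma^2/(\Gamma-1)$ and one proportional to $p^2\bigl(\gamma^2\Gamma(2-\Gamma)+(\Gamma-1)^2\bigr)$; here the hypothesis $\Gamma\le2$ is exactly what keeps the coefficient of $p^2$ positive, so strict positivity follows because $\rho,p>0$.

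For the reverse inclusion $G_u^{(1)}\subseteq G_u$, I would take $\vec U$ with $D>0$ and $g(\vec U)>0$ and show the recovery \eqref{eq:Eqforp}--\eqref{eq:vUdU} returns an admissible state. From $g(\vec U)>0$ I get $E>\sqrt{D^2+\|\bm m\|^2}\ge\|\bm m\|\ge0$, so $E>0$ and $E+p>\|\bm m\|$ for all $p\ge0$, which guarantees the square root in $\Phi_{\bf U}$ is real on $[0,+\infty)$. I would then study $\Phi_{\bf U}$ on $[0,+\infty)$: evaluating at $p=0$ and factoring gives
\begin{equation*}
\Phi_{\bf U}(0)=\frac{\sqrt{E^2-\|\bm m\|^2}\,\bigl(D-\sqrt{E^2-\|\bm m\|^2}\bigr)}{E},
\end{equation*}
which is strictly negative precisely because $g(\vec U)>0$ forces $\sqrt{E^2-\|\bm m\|^2}>D$; meanwhile $\Phi_{\bf U}(p)\to+\infty$ as $p\to+\infty$ since the leading term $p/(\Gamma-1)$ dominates. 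A direct differentiation shows $\Phi_{\bf U}'(p)\ge\frac{1}{\Gamma-1}-\frac{\|\bm m\|^2}{(E+p)^2}$ after discarding a nonnegative term, and using $\tfrac{1}{\Gamma-1}\ge1$ (from $\Gamma\le2$) together with $\|\bm m\|^2/(E+p)^2\le\|\bm m\|^2/E^2<1$ gives $\Phi_{\bf U}'>0$, so $\Phi_{\bf U}$ is strictly increasing and has a unique root $p^\ast>0$. Finally $\|\bm v(\vec U)\|=\|\bm m\|/(E+p^\ast)<1$ because $E+p^\ast>\|\bm m\|$, and then $\rho(\vec U)=D\sqrt{1-\|\bm v(\vec U)\|^2}>0$, so $\vec U\in G_u$.

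The main obstacle I anticipate is the forward direction's algebra: reducing $E^2-D^2-\|\bm m\|^2$ to a transparently positive form and pinpointing where $\Gamma\le2$ enters (it is the sign of the $p^2$ coefficient). The reverse direction is more structural and should follow cleanly once the sign of $\Phi_{\bf U}(0)$ and the monotonicity of $\Phi_{\bf U}$ are established, both of which again rely on $g(\vec U)>0$ and $\Gamma\le2$.
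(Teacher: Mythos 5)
Your argument is correct. The paper does not actually supply its own proof of this lemma---it cites \cite{WuTang2015}---but both halves of your proposal can be checked against material the paper does contain. Your reverse inclusion $G_u^{(1)}\subseteq G_u$ is essentially the analysis the paper carries out at the start of Section \ref{sec:recovery}: the same factored expression for $\Phi_{\bf U}(0)$ (negative precisely because $g(\vec U)>0$), the same lower bound $\Phi_{\bf U}'(p)\ge 1-\|{\bm m}\|^2/(E+p)^2>0$ obtained by discarding the nonnegative term carrying the factor $D$, and the same appeal to the Intermediate Value Theorem, followed by \eqref{eq:vUdU} to conclude $\|{\bm v}(\vec U)\|<1$ and $\rho(\vec U)>0$. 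The forward inclusion is the part you genuinely supply, and the algebra you sketch does close: using $\rho H=\rho+\tfrac{\Gamma}{\Gamma-1}p$ and $\|{\bm m}\|^2=(\rho H)^2(\gamma^4-\gamma^2)$ one obtains exactly
\[
E^2-D^2-\|{\bm m}\|^2=\frac{2\rho p\gamma^2}{\Gamma-1}+\frac{p^2}{(\Gamma-1)^2}\Bigl(\gamma^2\Gamma(2-\Gamma)+(\Gamma-1)^2\Bigr)>0 ,
\]
which together with your observation that $E=\rho\gamma^2+\rho e\gamma^2+p(\gamma^2-1)>0$ gives $g(\vec U)>0$; the hypothesis $\Gamma\le 2$ enters exactly where you say it does. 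One point worth making explicit in a final write-up: since $G_u$ in \eqref{Gu} is phrased via the implicit functions $\rho(\vec U)$, $p(\vec U)$, ${\bm v}(\vec U)$, the forward direction should be read as starting from a primitive state in $G_w$ and forming $\vec U$ by \eqref{eq:UU}, with your reverse direction then closing the loop by showing the recovery map returns a unique admissible primitive state, so the two descriptions genuinely coincide.
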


The satisfaction of constraints \eqref{Gu} is necessary, not only for physical significance, but also for
the hyperbolicity and well-posedness of the special RHD system \eqref{eq:RHD}.
In fact, as long as ${\bf U}\in G_u$, the system \eqref{eq:RHD}
is strictly hyperbolic. Let $\lambda_{\bf n}^{(k)}( {\bf U})$ and ${\bf R}_{\bf n} ({\bf U})$ be the eigenvalues and the corresponding right eigenmatrix of the rotated Jacobian matrix $\vec{A}_{\bf n}({\bf U})$,  which are given in detail in Appendix \ref{eigenstructure}.
Note that for any $\zeta >0$ and any ${\bf U} \in G_u$, the following homogeneous properties hold
\begin{equation}\label{eq:HOG}
	{\bf F}_i( \zeta {\bf U} ) = \zeta {\bf F}_i(  {\bf U} ), \quad 
	g( \zeta {\bf U} ) = \zeta g(  {\bf U} ),
\end{equation}
and the following properties hold
\begin{equation}\label{eq:DML}
\lambda_{\bf n}^{(k)}( \zeta {\bf U}) =  \lambda_{\bf n}^{(k)}( {\bf U}), \quad  {\bf R}_{\bf n} (\zeta  {\bf U}) = {\bf R}_{\bf n} ({\bf U}), \quad {\bf R}_{\bf n}^{-1} (\zeta {\bf U}) = {\bf R}_{\bf n}^{-1} ({\bf U}).
\end{equation}
Based on \eqref{eq:HOG}, we can show the following properties.

\begin{lemma}\label{lem:lamG}
If ${\bf U} \in G_u$, then for any constant $\zeta >0$, $\zeta {\bf U} \in G_u$.
\end{lemma}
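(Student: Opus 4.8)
The plan is to reduce the claim to the equivalent characterization of $G_u$ furnished by Lemma \ref{lem:Gu1} together with the homogeneity relation recorded in \eqref{eq:HOG}, so that no direct manipulation of the implicit recovery maps $\rho(\vec U)$, $p(\vec U)$, $\bm v(\vec U)$ is required. Attacking the defining constraints in \eqref{Gu} head-on would be awkward, since those are phrased through the nonlinear implicit functions defined by \eqref{eq:Eqforp}--\eqref{eq:vUdU}; the whole point is to sidestep that difficulty by passing to the explicit description in \eqref{Gu1}.

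First I would invoke Lemma \ref{lem:Gu1} to replace the condition $\vec U \in G_u$ by the two explicit inequalities $D > 0$ and $g(\vec U) = E - \sqrt{D^2 + \|\bm m\|^2} > 0$. Given $\vec U = (D, \bm m, E)^\top \in G_u$ and a constant $\zeta > 0$, the scaled state reads $\zeta \vec U = (\zeta D, \zeta \bm m, \zeta E)^\top$, whose mass-density component $\zeta D$ is positive because both factors are. For the second constraint I would use the first-order homogeneity $g(\zeta \vec U) = \zeta\, g(\vec U)$ from \eqref{eq:HOG}; as $\zeta > 0$ and $g(\vec U) > 0$, this gives $g(\zeta \vec U) > 0$. (A self-contained check is equally immediate, since $\sqrt{(\zeta D)^2 + \|\zeta \bm m\|^2} = \zeta\sqrt{D^2 + \|\bm m\|^2}$ for $\zeta > 0$.) Having established both $\zeta D > 0$ and $g(\zeta \vec U) > 0$, I would conclude $\zeta \vec U \in G_u^{(1)} = G_u$ by Lemma \ref{lem:Gu1}, which finishes the argument.

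There is no genuine obstacle here: the entire content of the statement is already packaged into the equivalent representation of Lemma \ref{lem:Gu1} and the scaling identity \eqref{eq:HOG}. The only thing to be careful about is to argue through the $g$-based set $G_u^{(1)}$ rather than the original implicit-function definition, since the latter would otherwise force one to track how $\rho$, $p$, and $\bm v$ respond to a rescaling of $\vec U$ through the root-finding equation \eqref{eq:Eqforp} --- an unnecessary complication that the convexity/characterization lemmas are designed to avoid.
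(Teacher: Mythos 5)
Your proof is correct and follows essentially the same route as the paper: both pass to the equivalent set $G_u^{(1)}$ of Lemma \ref{lem:Gu1} and use the homogeneity $g(\zeta \vec{U}) = \zeta g(\vec{U})$ from \eqref{eq:HOG} to conclude. You simply spell out the positivity of $\zeta D$ explicitly, which the paper leaves implicit.
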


\begin{proof}
This directly follows from Lemma \ref{lem:Gu1} and $g(\zeta {\bf U}) = \zeta g({\bf U})>0$.
\end{proof}

\begin{proposition}\label{prop:invar}
Let ${\mathcal S}_t$ denote the exact time evolution operator of 	the RHD system \eqref{eq:RHD}, i.e. the exact solution satisfies
$$
{\bf U}({\bm x},t) =  {\mathcal S}_t \big( {\bf U}({\bm x},0) \big). 
$$
Then for any constant $\zeta>0$, we have 
$$
{\bf U}({\bm x},t) = \frac{1}{\zeta} {\mathcal S}_t \big( \zeta {\bf U}({\bm x},0) \big). 
$$
This indicates the exact time evolution operator ${\mathcal S}_t$ is homogeneous.
\end{proposition}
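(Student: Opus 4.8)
The plan is to exploit the flux homogeneity \eqref{eq:HOG} together with the uniqueness of the exact solution operator. First I would fix $\zeta>0$ and let $\mathbf{U}(\bm{x},t)$ be the exact solution of \eqref{eq:RHD} with initial data $\mathbf{U}(\bm{x},0)$, so that $\mathbf{U}(\bm{x},t)=\mathcal{S}_t\big(\mathbf{U}(\bm{x},0)\big)$ by definition. I then introduce the rescaled field
\begin{equation*}
	\widetilde{\mathbf{U}}(\bm{x},t) := \zeta\,\mathbf{U}(\bm{x},t),
\end{equation*}
and the strategy is to verify that $\widetilde{\mathbf{U}}$ is itself an exact solution of \eqref{eq:RHD}, but with the rescaled initial data $\zeta\,\mathbf{U}(\bm{x},0)$. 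Once this is established, uniqueness of the evolution forces $\widetilde{\mathbf{U}}(\bm{x},t)=\mathcal{S}_t\big(\zeta\,\mathbf{U}(\bm{x},0)\big)$, which upon dividing by $\zeta$ yields exactly the claimed identity.

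The verification splits into matching the initial data and matching the PDE. The initial condition is immediate since $\widetilde{\mathbf{U}}(\bm{x},0)=\zeta\,\mathbf{U}(\bm{x},0)$. For the evolution equation, I would compute the time derivative as $\partial_t\widetilde{\mathbf{U}}=\zeta\,\partial_t\mathbf{U}$, and then invoke the homogeneity of the flux from \eqref{eq:HOG} to write $\mathbf{F}_i(\widetilde{\mathbf{U}})=\mathbf{F}_i(\zeta\mathbf{U})=\zeta\,\mathbf{F}_i(\mathbf{U})$ for $i=1,2$, so that $\partial_{x_i}\mathbf{F}_i(\widetilde{\mathbf{U}})=\zeta\,\partial_{x_i}\mathbf{F}_i(\mathbf{U})$. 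Substituting these into the left-hand side of \eqref{eq:RHD} and factoring out $\zeta$ gives
\begin{equation*}
	\partial_t\widetilde{\mathbf{U}} + \partial_x\mathbf{F}_1(\widetilde{\mathbf{U}}) + \partial_y\mathbf{F}_2(\widetilde{\mathbf{U}})
	= \zeta\Big( \partial_t\mathbf{U} + \partial_x\mathbf{F}_1(\mathbf{U}) + \partial_y\mathbf{F}_2(\mathbf{U}) \Big) = \mathbf{0},
\end{equation*}
since $\mathbf{U}$ solves \eqref{eq:RHD}. Thus $\widetilde{\mathbf{U}}$ satisfies the same conservation law.

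The remaining, and most delicate, point is the notion of solution under which uniqueness is asserted, since solutions of \eqref{eq:RHD} are generally understood only in the weak entropy sense. I would therefore note that the scaling $\mathbf{U}\mapsto\zeta\mathbf{U}$ keeps the state physically admissible by Lemma \ref{lem:lamG} (so hyperbolicity is retained), and that it respects the weak formulation: the Rankine--Hugoniot relation $s\,\jump{\mathbf{U}}=\jump{\mathbf{F}_i(\mathbf{U})}$ scales to $s\,\jump{\zeta\mathbf{U}}=\jump{\mathbf{F}_i(\zeta\mathbf{U})}$ with the \emph{same} shock speed $s$, consistent with the invariance of the characteristic speeds in \eqref{eq:DML}; likewise the entropy inequalities scale homogeneously. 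Hence $\widetilde{\mathbf{U}}$ is an admissible entropy solution with data $\zeta\,\mathbf{U}(\bm{x},0)$, and uniqueness closes the argument. The main obstacle is thus not the algebra, which is a one-line consequence of \eqref{eq:HOG}, but rather the careful appeal to well-posedness: one must grant that the exact operator $\mathcal{S}_t$ is well defined and single-valued on admissible data, so that two entropy solutions sharing the same initial state necessarily coincide.
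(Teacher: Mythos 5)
Your proposal is correct and follows essentially the same route as the paper: the paper's proof likewise observes that, by the flux homogeneity \eqref{eq:HOG} and Lemma \ref{lem:lamG}, the rescaled field $\zeta\,{\bf U}({\bm x},t)$ is the exact solution with initial data $\zeta\,{\bf U}({\bm x},0)$, and concludes by identifying it with ${\mathcal S}_t(\zeta\,{\bf U}({\bm x},0))$. Your additional remarks on the Rankine--Hugoniot relation, the entropy inequalities, and the uniqueness of the entropy solution make explicit a well-posedness point that the paper leaves implicit, but they do not alter the argument.
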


\begin{proof}
	Based on ${\bf F}_i( \zeta {\bf U} ) = \zeta {\bf F}_i(  {\bf U} ) $ and Lemma \ref{lem:lamG}, 
	one can verify that $\zeta {\bf U}({\bm x},t)$ is the exact solution to 
	the RHD system \eqref{eq:RHD} with initial data $\zeta {\bf U}({\bm x},0)$. This completes the proof. 
\end{proof}

	\section{Numerical method}
	\label{Sec:Numerical_method}
	In this section, we present
	a high-order PCP finite volume method,
	which always keeps numerical solutions in the
	 admissible state set $G_u$,
	 for the 2D special RHD equations \eqref{eq:RHD} on unstructured triangular meshes.

Let ${\bm x}:=(x,y)$ represents the 2D spatial coordinates.
Assume that the 2D domain $\Omega$ is partitioned into triangular control volumes $\mathcal{T}_h$. For every cell $K\in \mathcal{T}_h$,
integrating the RHD system \eqref{eq:RHD} over $K$ and then using the divergence theorem gives
\begin{equation}\label{numerical1}
	\frac{\rm d}{{\rm d}t}\iint_{K} {\bf U} {\rm d} {\bm x} +\sum_{j=1}^{3} \int_{\mathcal{E}^j_K} {\bf n}^{(j)}_K \cdot { {\bf F}( {\bf U} )}  {\rm d} s = {\bf 0},
\end{equation}
where $\mathcal{E}^j_K, 1\le j \le 3$,
stand for the three edges of triangle $K$, the real vector ${\bf n}^{(j)}_K=(n_{K,1}^{(j)},n_{K,2}^{(j)})$ denotes the unit outward normal vector of edge $\mathcal{E}_K^j$, and ${\bf n}^{(j)}_K \cdot { {\bf F}( {\bf U} )} :=
n_{K,1}^{(j)} \vec{F}_1(\vec{U})  +  n_{K,2}^{(j)} \vec{F}_2(\vec{U})$.

\subsection{Outline and key ingredients of our method}
\label{sec:outline}

Let $\overline{\bf U}_K(t)$ denotes the numerical approximation to the cell-averaged solution $\frac{1}{|K|}\iint_{K} {\bf U}( {\bm x},t) {\rm d} {\bm x}$, where $|K|$ is the area of cell $K$.
From \eqref{numerical1}, one can obtain
a semi-discrete
finite volume method for the RHD system \eqref{eq:RHD}, in the following form
\begin{equation}\label{eq:FV1}
	\frac{\rm d}{ {\rm d} t}\overline{{\bf U}}_K =- \frac{1}{ |K| } \sum_{j=1}^{3} \int_{\mathcal{E}_K^j} \widehat{{\bf F}} \left(\vec{U}_h^{\text{int(K)}},\vec{U}_h^{\text{ext(K)}};{\bf n}^{(j)}_K \right) {\rm d} s,
\end{equation}
where $\widehat{{\bf F}}(\vec{U}_h^{\text{int(K)}},\vec{U}_h^{\text{ext(K)}};{\bf n}^{(j)}_K)$ denotes the numerical flux which approximates ${\bf n}^{(j)}_K \cdot { {\bf F}( {\bf U} )}$ and will be specified later, $\vec{U}_h({\bm x},t)$
is a suitable high-order numerical approximation to the exact solution $\vec{U}({\bm x},t)$, and the superscripts ``$\text{int(K)}$'' and ``$\text{ext(K)}$'' are the associated limits of  $\vec{U}_h$ at the cell interfaces which are taken from the interior and  exterior of $K$, respectively.

To achieve $(k+1)$th-order accuracy in space, we use a piecewise polynomial vector function $\vec{U}_h({\bm x},t) \in \mathbb{V}_h^k$ to approximate the exact solution $\vec{U}(\vec{x},t)$ for any fixed $t$, where
\[
\mathbb{V}_h^k :=\left \{\vec{u}=(u_1, \cdots, u_4)^\top:~ u_{l}|_K \in \mathbb{P}^k(K),~1\le l \le 4,\forall K\in\mathcal{T}_h \right \},
\]
and $\mathbb{P}^k(K)$ is the space of polynomials of total degree up to $k$ in  cell $K$.
In our finite volume method, the approximate solution function $\vec{U}_h({\bm x},t)$ is reconstructed from the cell averages $\left \{ \overline{{\bf U}}_K: K\in\mathcal{T}_h \right \}$.
Mathematically, the reconstruction procedure can be denoted by an  operator $\mathcal{R}_h^K:\mathbb{V}_h^0\rightarrow \mathbb{V}_h^k$, which maps the cell averages $\left \{ \overline{{\bf U}}_K: K\in\mathcal{T}_h \right \}$ to the piecewise polynomial solution ${\bf U}_h$.
For notational convenience, in the following we will temporarily suppress the $t$ dependence of all quantities, if no confusion arises.

	The edge integral in \eqref{eq:FV1} cannot be analytically evaluated in general
	and should be approximated by some 1D quadrature, for example, the $Q$-point Gauss quadrature with $2Q-1 \ge k$.
	Then the semi-discrete scheme \eqref{eq:FV1} becomes
	\begin{equation}\label{eq:FV}
		\frac{ {\rm d} \overline{{\bf U}}_K }{ {\rm d} t} =- \frac{1}{ |K| } \sum_{j=1}^{3}|\mathcal{E}_K^j|\sum_{q=1}^{\text{Q}} \omega_q \widehat{{\bf F}} \left(\vec{U}_h^{\text{int}(K)}({\bm x}_K^{(jq)}),\vec{U}_h^{\text{ext}(K)}({\bm x}_K^{(jq)});{\bf n}^{(j)}_K \right)=: \vec{L}_K(\vec{U}_h),
	\end{equation}
	where $|\mathcal{E}_K^j|$ represents the length of $\mathcal{E}_K^j$,
	$\{{\bm x}_K^{(jq)}\}_{1\leq q\leq Q}$ denote the Gauss quadrature points on $\mathcal{E}_K^j$, and $\{ \omega_q\}_{1\leq q\leq Q}$ are the associated weights with $\sum_{q=1}^{\text{Q}}\omega_q=1$. For a third-order accurate scheme with $k=2$,  we take $Q=2$ and $\omega_1=\omega_2=1/2$.

	In order to define the PCP finite volume schemes, we introduce the following two subsets of $\mathbb{V}_h^k$:
	\begin{align*}
		&\overline {\mathbb{G}}_h^k:= \left \{\vec{u}\in\mathbb{V}_h^k:~ 	 \overline{\vec{u}}_K= \frac{1}{|K|} \iint_K\vec{u}({\bm x}) {\rm d} {\bm x} \in G_u,\forall K \in\mathcal{T}_h \right \}, \\
		&	\mathbb{G}_h^k:= \left \{\vec{u}\in \overline {\mathbb{G}}_h^k:~ {\bf u}_K^{ (jq) } \in G_u, 1\le j \le 3, 1\le q \le Q,  \frac{ \overline{\vec{u}}_K-\frac23\widehat{\omega}_1\sum\limits_{j=1}^3\sum\limits_{q=1}^Q\omega_{q} {\bf u}_K^{ (jq) }  }{1-2\widehat{\omega}_1}  \in G_u,\forall K \in\mathcal{T}_h \right \},
	\end{align*}
	where
	$
	{\bf u}_K^{ (jq) } := \vec{u} \big|_K(  {\bm x}_K^{(jq)} )
	$, $\overline{\vec{u}}_K$ is the cell average of ${\bf u}$ over cell $K$, and
	 $\widehat{\omega}_1 = \frac{1}{ L ( L-1 ) }$ is the first weight of
	 the $L$-point Gauss-Lobatto quadrature with $L:=\left\lceil \frac{k+3}2 \right\rceil $.

Our high-order PCP finite volume schemes on unstructured triangular meshes are built on the following
four key ingredients (KI):
\begin{description}
	\item[(KI-1)] High-order reconstruction operator $\mathcal{R}_h^k:\mathbb{V}_h^0\rightarrow \mathbb{V}_h^k$. This operator represents the reconstruction procedure which constructs the piecewise polynomial solution ${\bf U}_h$
	from the cell averages $\left \{ \overline{{\bf U}}_K: K\in\mathcal{T}_h \right \}$.
	Several high-order reconstruction techniques were developed on unstructured triangular meshes in the literature, including but not limited to \cite{wang2017compact,zhu2018new}. 
	In our finite volume method, the reconstruction operator should keep the conservativeness:
\begin{equation}\label{eq:rec00}
	\frac{1}{|K|}\iint_K \mathcal{R}_h^k (\vec{u}){\rm d}{\bm x}=\frac{1}{|K|}\iint_K {\bf u} {\rm d} {\bm x}, \qquad \forall K\in \mathcal{T}_h,~ \forall \vec{u}\in \mathbb{V}_h^0,
\end{equation}
	 which yields
	 \begin{equation}\label{eq:rec1}
	 	{\bf U}_h \in \overline {\mathbb{G}}_h^k,  \quad \mbox{provided that } \overline{{\bf U}}_K \in G_u~~\forall K\in\mathcal{T}_h,
	 \end{equation}
 	namely, the operator range satisfies
 	\begin{equation}\label{eq:rec2}
 		\mathcal{R}_h^k (\overline {\mathbb{G}}_h^0) \subseteq \overline {\mathbb{G}}_h^k .
 	\end{equation}
	 Most of the existing reconstruction approaches satisfy \eqref{eq:rec00}, and in particular, we employ the simple high-order WENO reconstruction recently developed in \cite{zhu2018new}.
	 We will take the case $k=2$ as an example to illustrate the WENO reconstruction procedure $\mathcal{R}_h^k$
	 in Section \ref{sec:RCST}.
	\item[(KI-2)] High-order PCP limiting operator $\Pi_h: \overline {\mathbb{G}}_h^k \to \mathbb{G}_h^k$. This operator denotes a simple limiter,
	which maps the reconstructed numerical solution
	$\vec{U}_h\in \overline {\mathbb{G}}_h^k$ to $\widetilde{\vec{U}}_h:=\Pi_h(\vec{U}_h)\in {\mathbb{G}}_h^k$. The limiter also maintains the high-order accuracy and the conservativeness
	\[
	\frac{1}{|K|}\iint_K{\Pi}_h(\vec{u}) {\rm d}{\bm x}=\frac{1}{|K|}\iint_K\vec{u} {\rm d} {\bm x}, \qquad \forall K\in \mathcal{T}_h,~ \forall \vec{u}\in \overline{\mathbb{G}}_h^k.
	\]
	Clearly, the limited solution $\widetilde{\vec{U}}_h:=\Pi_h(\vec{U}_h)\in {\mathbb{G}}_h^k$ satisfies
	\begin{equation*}
	 \widetilde{\vec{U}}_{jq}^{{\rm int}(K)} :=	\widetilde{\vec{U}}_h^{\text{int}(K)}({\bm x}_K^{(jq)}) \in G_u, \quad \widetilde{\vec{U}}_{jq}^{{\rm ext}(K)} := \widetilde{\vec{U}}_h^{\text{ext}(K)}({\bm x}_K^{(jq)}) \in G_u,\quad \forall j,q,K,
	\end{equation*}
	which guarantee
	 the existence and uniqueness of the positive solution to the nonlinear equation \eqref{eq:Eqforp} and therefore, theoretically ensure the unique (physically admissible) primitive
	variables
	\begin{equation}
	 \widetilde{\vec{W}}_{jq}^{{\rm int}(K)} :=	{\bm {\mathcal W}} \left( \widetilde{\vec{U}}_{jq}^{{\rm int}(K)} \right) \in G_w, \qquad
	 \widetilde{\vec{W}}_{jq}^{{\rm ext}(K)} :=	{\bm {\mathcal W}} \left( \widetilde{\vec{U}}_{jq}^{{\rm ext}(K)} \right) \in G_w.
	\end{equation}
	More details of the PCP limiter $\Pi_h$ will be presented in Section \ref{sec:constraint-preserving}.
	\item[(KI-3)] Convergence-guaranteed algorithms for recovery of primitive variables.
	Although the existence and uniqueness of the
	positive solution to the nonlinear equation \eqref{eq:Eqforp} are ensured in theory by
	the PCP limiting procedure in {\bf (KI-2)}, some root-finding algorithms, such as Newton's method, may still fail to get the unique positive solution of \eqref{eq:Eqforp}.
	 In Section \ref{sec:recovery}, we will present three effective algorithms for solving the
	  nonlinear equation \eqref{eq:Eqforp}. We will rigorously prove the proposed algorithms provably guarantee the convergence in recovering the unique primitive
	  variables ${\bf W} \in G_w$ from any given admissible ${\bf U} \in G_u$.
	\item[(KI-4)] The PCP numerical flux and provable PCP property.
	In order to preserve the cell averages $\overline{{\bf U}}_K \in G_u$ during the time evolution of the fully discrete scheme, we need to seek a spatial discretization operator $\vec{L}_K$ such that the following PCP property
	\begin{equation}\label{eq:constraint-preservingproperty}
			\overline{\bf U}_K + \Delta t \vec{L}_K( \widetilde {\bf U}_h)  \in G_u~~\forall K\in\mathcal{T}_h,  \quad \mbox{provided that } \widetilde {\bf U}_h \in \mathbb{G}_h^k,
	\end{equation}
	holds under some CFL condition on $\Delta t$, where the requirement $\widetilde {\bf U}_h = \Pi_h(\vec{U}_h) \in \mathbb{G}_h^k$ is ensured by
	the PCP limiter in {\bf (KI-2)}.
	We achieve the PCP property \eqref{eq:constraint-preservingproperty} by adopting the HLL numerical flux
\begin{equation} \label{eq:HLL}
	\widehat{\bf F}^{hll} ( {\bf U}^-, {\bf U}^+; {\bf n} )
	= \frac{ \sigma_r {\bf n} \cdot {\bf F}({\bf U}^-) 
- \sigma_l  {\bf n}  \cdot	{\bf F}({\bf U}^+) + \sigma_l \sigma_r ( {\bf U}^+- {\bf U}^- )
  } { \sigma_r - \sigma_l }
\end{equation}
with
\begin{align*}
&	\sigma_l ({\bf U}^-, {\bf U}^+; {\bf n}) := \min \left\{
\lambda_{\bf n}^{(1)}( {\bf U}^- ), \lambda_{\bf n}^{(1)}({\bf U}^+), 0
 \right\},
 \\
&  \sigma_r ({\bf U}^-, {\bf U}^+; {\bf n}) := \max \left\{
\lambda_{\bf n}^{(4)}( {\bf U}^- ), \lambda_{\bf n}^{(4)}({\bf U}^+), 0
\right\}.
\end{align*}
	In Theorem \ref{THCPP} of Section \ref{sec:CPP}, we will rigorously prove:
	 if the numerical flux $\widehat{{\bf F}}$ in \eqref{eq:FV} chosen as the above HLL flux
	$\widehat{\bf F}^{hll}$,
	 then the resulting spatial discretization operator
	 $
	 \vec{L}_K
	 $
	 satisfies the desired PCP property \eqref{eq:constraint-preservingproperty}.
	Some other numerical fluxes such as the Lax-Friedrichs flux and the HLLC flux  also meet the PCP property \eqref{eq:constraint-preservingproperty} in this framework.
\end{description}

\begin{remark}
To compute
$\widehat{\bf F}^{hll} \left( \widetilde{\vec{U}}_{jq}^{{\rm int}(K)}, \widetilde{\vec{U}}_{jq}^{{\rm ext}(K)};{\bf n}^{(j)}_K \right) $ for $\vec{L}_K( \widetilde {\bf U}_h)$ in \eqref{eq:constraint-preservingproperty},
we need to evaluate
\begin{equation}\label{eq:1112}
	{\bf n}_K^{(j)} \cdot {\bf F} \left( \widetilde{\vec{U}}_{jq}^{{\rm int}(K)} \right)   = \sum_{i=1}^2 n_{K,i}^{(j)} ~ \vec{F}_i \left( \widetilde{\vec{U}}_{jq}^{{\rm int}(K)} \right),\quad~ {\bf n}_K^{(j)} \cdot {\bf F} \left( \widetilde{\vec{U}}_{jq}^{{\rm ext}(K)} \right)  = \sum_{i=1}^2 n_{K,i}^{(j)} ~ \vec{F}_i \left( \widetilde{\vec{U}}_{jq}^{{\rm ext}(K)} \right).
\end{equation}
As mentioned in Section \ref{sec:GovenEqn}, unlike the non-relativistic case, for RHD there are no explicit
expressions for $\vec F_{i} ({\bf U})$.
To evaluate the flux ${\bf F}_i({\bf U})$ in \eqref{eq:1112}, we have to first
recover the primitive quantities $\widetilde{\vec{W}}_{jq}^{{\rm int}(K)}$ and $\widetilde{\vec{W}}_{jq}^{{\rm ext}(K)}$ from the
conservative vectors $\widetilde{\vec{U}}_{jq}^{{\rm int}(K)}$ and  $\widetilde{\vec{U}}_{jq}^{{\rm ext}(K)}$, respectively.
The recovery is theoretically ensured by {\bf (KI-2)},
and three provably convergent algorithms for practical recovery are discussed in {\bf (KI-3)} and Section \ref{sec:recovery}.
\end{remark}

Assume that the time interval is partitioned into a mesh $\{t_0=0,t_{n+1}=t_n+\Delta t_n, 0\leq n<N_t\}$ with the time step-size $\Delta t$ determined by some CFL condition.
Let $\overline {\bf U}_K^n$ denote the numerical approximation to the cell-averaged
solution on cell $K$ at $t=t_n$.
Let $\overline {\bf U}^n({\bm x})$ denote the piece-wise constant function defined by the cell averages $\{ \overline {\bf U}_K^n: K \in {\mathcal T}_h \}$.

Based on the above four key ingredients {\bf (KI-1)}--{\bf (KI-4)} and the forward Euler time discretization, we obtain
a fully discrete PCP finite volume method:
\begin{equation}\label{eq:1stconstraint-preservingFV}
\overline {\bf U}^{n+1}_K = \overline {\bf U}^{n}_K + \Delta t_n
{\bf L}_K \big( \widetilde {\bf U}_h \big) = \overline {\bf U}^{n}_K + \Delta t_n
{\bf L}_K \left( \Pi_h \mathcal{R}_h^k  \overline {\bf U}^n \right)
\end{equation}
with the initial cell averages given by
$$
\overline {\bf U}^{0}_K = \frac{1}{|K|} \iint_K {\bf U} ( {\bm x}, 0 ) {\rm d} {\bm x} \in G_u \qquad \forall K \in {\mathcal T}_h.
$$
The PCP property of the scheme \eqref{eq:1stconstraint-preservingFV} can be easily verified by induction as follows:
\begin{itemize}
	\item Thanks to the convexity of $G_u$, one has $\overline {\bf U}^{0}_K \in G_u$.
	\item Given $\overline {\bf U}^{n}_K \in G_u$, we have
	$ \overline {\bf U}^n \in   \overline {\mathbb{G}}_h^0 $,
	which implies $\mathcal{R}_h^k  \overline {\bf U}^n \in \overline {\mathbb{G}}_h^k$ by \eqref{eq:rec2}.
	Thus $\Pi_h \mathcal{R}_h^k  \overline {\bf U}^n \in  {\mathbb{G}}_h^k$ by
	the PCP limiter. This ensures $\overline {\bf U}^{n+1}_K \in G_u$
	by \eqref{eq:constraint-preservingproperty}.
\end{itemize}
For clarification, we draw the flowchart of the PCP method \eqref{eq:1stconstraint-preservingFV} in Fig. \ref{fig:flowmap}.
\begin{figure}[tbhp]
	\centering
	\includegraphics[width=0.7\textwidth]{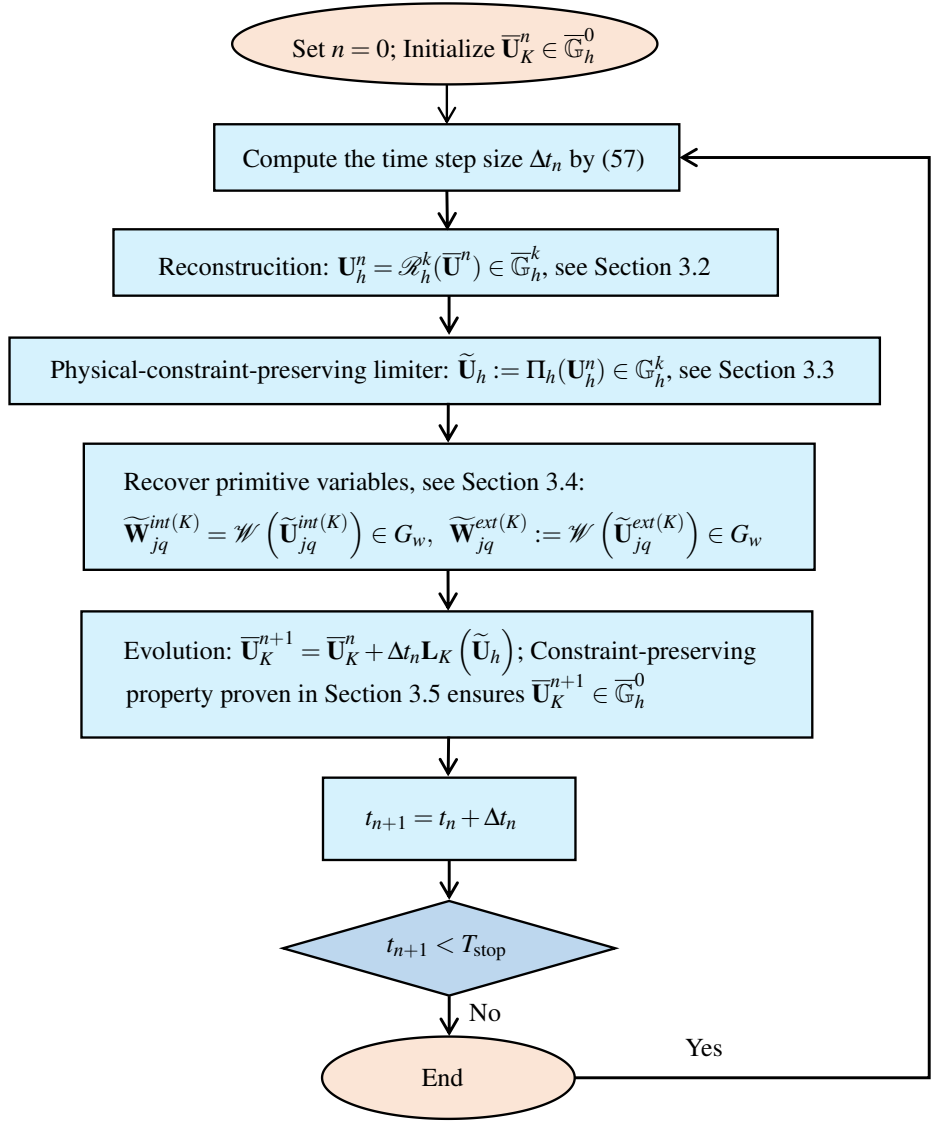}
	\caption{The flowchart of the proposed method. }
	\label{fig:flowmap}
\end{figure}

The PCP scheme is only first-order accurate in time.
	To achieve high-order accuracy in time, we can use the strong-stability-preserving (SSP) high-order methods \cite{GottliebShuTadmor2001}. 
	Since an SSP method is formally
	a convex combination of
	the forward Euler method, the PCP property remains valid due to the convexity of $G_u$.
	For example, when the third-order accurate SSP Runge-Kutta (SSP-RK) method is adopted,
	we obtain the following third-order accurate, fully discrete, PCP finite volume method:
	\begin{equation}
		\begin{cases}
		\displaystyle	\overline{\vec{U}}^{*}_K = \overline{\vec{U}}^{n}_K + \Delta t_n \vec{L}_K \left( \Pi_h \mathcal{R}_h^k \overline {\bf U}^n \right),\\[2mm]
		\displaystyle	\overline{\vec{U}}^{**}_K = \frac{3}{4}\overline{\vec{U}}^{n}_K +\frac14\left(\overline{\vec{U}}^{*}_K + \Delta t_n \vec{L}_K \left( \Pi_h \mathcal{R}_h^k \overline {\bf U}^* \right)  \right), \\[2mm]
		\displaystyle	\overline{\vec{U}}^{n+1}_K = \frac{1}{3}\overline{\vec{U}}^{n}_K +\frac23\left(\overline{\vec{U}}^{**}_K + \Delta t_n \vec{L}_K \left( \Pi_h \mathcal{R}_h^k \overline {\bf U}^{**} \right)  \right).
		\end{cases}
	\end{equation}

\begin{remark} The  HLL numerical flux $\widehat{\bf F}^{hll}$ not only meets the PCP requirement, but also has a homogeneous property as the flux function ${\bf F}({\bf U})$ in \eqref{eq:HOG}. This leads to 
the homogeneity of  the spacial discretization operator $\vec{L}_K$, namely,   
	$\vec{L}_K(\zeta\vec{U}_h)=\zeta\vec{L}_K(\vec{U}_h), \forall \zeta>0,$ as it will be shown in the proof of Theorem \ref{thrm:invarnumeric}. 
If the high-order reconstruction operator $\mathcal{R}^k_h$ and the PCP limiting operator $\Pi_h$ are both 
homogeneous, namely, 
\begin{equation}\label{eq:R-PI-HOM}
	\mathcal{R}^k_h ( \zeta \overline {\bf U} ) = \zeta \mathcal{R}^k_h ( \overline {\bf U} ), \qquad \Pi_h ( \zeta \overline {\bf U} ) = \zeta \Pi_h ( \overline {\bf U} ), 
\end{equation} 
then our numerical method preserves the scaling invariance property of the RHD system; see 
Theorem \ref{thrm:invarnumeric}. 
We remark that some existing WENO reconstructions may not satisfy the above homogeneity, due to the loss of scaling invariance of the nonlinear weights; see Remark \ref{remark:weight}. 
We will show our (slightly modified) WENO reconstruction operator  $\mathcal{R}^k_h$ and the PCP limiting operator $\Pi_h$ satisfy \eqref{eq:R-PI-HOM}; see Lemma \ref{lem:charactweno}, identity \eqref{eq:PiHOM}, and Theorem \ref{thrm:invarnumeric}. 

\end{remark}

	\subsection{High-order reconstruction operator $\mathcal{R}_h^k$} \label{sec:RCST}
	This section introduces a reconstruction procedure $\mathcal{R}_h^k:\mathbb{V}_h^0\rightarrow \mathbb{V}_h^k$ on
	unstructured triangular meshes, which constructs the piecewise polynomial solution ${\bf U}_h$
	from the cell averages $\left \{ \overline{{\bf U}}_K: K\in\mathcal{T}_h \right \}$.
	 In fact, one can use any proper high-order reconstruction techniques, and the PCP property is not affected by the chosen reconstruction approach. In this paper, we employ the
	 new high-order WENO reconstruction recently developed by Zhu and Qiu  \cite{zhu2018new},
	 because it is highly compact and efficient as its linear weights can be chosen rather arbitrarily provided that their summation equals one. 
	 We observe that the nonlinear weights in \cite{zhu2018new} do not satisfy certain scaling invariance property, so that the resulting WENO reconstruction operator is generally not homogeneous. A (slight) modification to the nonlinear weights will be proposed to address this.

\subsubsection{Review of a WENO reconstruction for scalar problems}

We first briefly review the WENO reconstruction in \cite{zhu2018new} for scalar functions on triangular meshes.  We take the case $k=2$ as an example to illustrate the third-order accurate WENO reconstruction procedure.
Given the cell averages $\overline u = \{ \overline u_K: K \in {\mathcal T}_h \}$ of a scalar,
we reconstruct a quadratic polynomial as approximation on an arbitrary target cell $K_0 \in {\mathcal T}_h$:
\begin{equation}\label{WENO-scalar}
	\mathcal P_{K_0} \overline u := \varpi_0  \left(\frac{1}{\gamma_0} \phi_2 (x,y) - \sum_{\ell=1}^4\frac{\gamma_\ell }{\gamma_0} \phi_1^{(\ell)} (x,y)\right) + \sum_{\ell=1}^4 \varpi_\ell \phi_1^{(\ell)} (x,y),
\end{equation}
where
\begin{itemize}
	\item $\phi_2 (x,y)$ is a quadratic polynomial, and $\{ \phi_1^{(\ell)} (x,y), 1\le \ell \le 4 \}$ are four linear polynomials; they are reconstructed from the cell averages $\{ \overline u_K\}$ and satisfy
	\begin{equation}\label{key5643}
		\frac{1}{|K_0|} \iint_{K_0} \phi_2 (x,y) {\rm d} x {\rm d} y = \overline u_{K_0}, \qquad
		\frac{1}{|K_0|} \iint_{K_0} \phi_1^{(\ell)} (x,y) {\rm d} x {\rm d} y = \overline u_{K_0}, \quad 1\le \ell \le 4;
	\end{equation}
	\item $\{ \gamma_\ell, 0 \le \ell \le 4 \}$ are the linear weights, which are all positive, and their summation equals one;
	\item $\{ \varpi_\ell, 0 \le \ell \le 4 \}$ are the nonlinear weights for suppressing potential nonphysical oscillations in discontinuous problems.
\end{itemize}

First, we construct $\phi_2 (x,y)$.  Taking the requirement \eqref{key5643} into account, we express $\phi_2 (x,y)$ as
\begin{equation}\label{key2678}
	\phi_2(x,y) = \overline u_{K_0} +  \sum_{j=1}^5 a_j \psi_j ( x,y )
\end{equation}
with
\begin{align*}
	&  \psi_1 =  \frac{x - x_0}{\sqrt{|K_0|}},\quad \psi_2 = \frac{y - y_0}{\sqrt{|K_0|}},\quad
	\psi_3 = \frac{(x - x_0)(y-y_0)}{|K_0|} - \frac{1}{|K_0|^2} \iint_{K_0} (x - x_0)(y-y_0)  {\rm d} x {\rm d}y,
	\\
	& \psi_4 = \frac{(x - x_0)^2}{|K_0|} - \frac{1}{|K_0|^2} \iint_{K_0} (x - x_0)^2 {\rm d} x {\rm d}y, \quad
	\psi_5 = \frac{(y - y_0)^2}{|K_0|} -  \frac{1}{|K_0|^2} \iint_{K_0} (y - x_0)^2 {\rm d} x {\rm d}y,
\end{align*}
where $(x_0,y_0)$ denotes the barycenter of the target cell $K_0$. The coefficients ${\bm a}=(a_1,a_2,\dots, a_5)$ in \eqref{key2678} are determined by solving the least-squares problem
$$
\min_{ {\bm a} \in \mathbb R^5 } \sum_{ K \in \mathbb N_0 } \left| \frac{1}{ |K| } \iint_K \phi_2(x,y) {\rm d} x {\rm d} y - \overline u_{K} \right|^2
$$
where $\mathbb{N}_0 :=\{K_0, K_1, K_2, K_3, K_{11}, K_{12}, K_{21}, K_{22}, K_{31}, K_{32} \}$ denotes
the large stencil for the target cell $K_0$ as shown in Fig.~ \ref{fig:stencil}.

\begin{figure}
	\centering
	\includegraphics[width=0.99\textwidth]{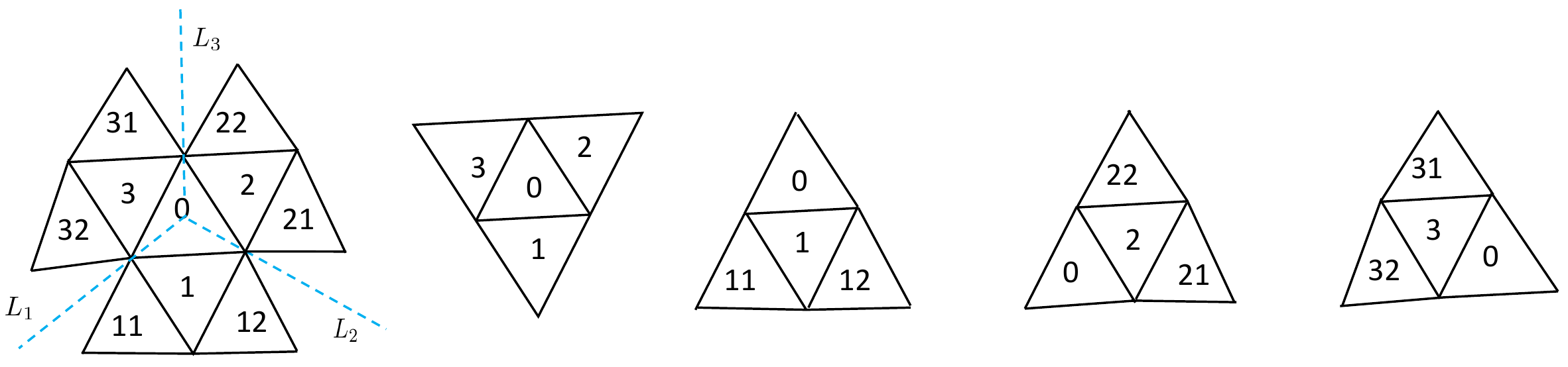}
	\caption{Reconstruction stencils $\mathbb{N}_0$ and $\mathbb{N}_1, \mathbb{N}_2, \mathbb{N}_3, \mathbb{N}_4$ (from left to right). }
	\label{fig:stencil}
\end{figure}

Next, we construct $\phi_1^{(\ell)} (x,y)$, $1\le \ell \le 4$.
With the requirement \eqref{key5643} taken into account, we express $\phi_1^{(\ell)} (x,y)$ as
\begin{equation}\label{key4333}
	\phi_1^{(\ell)}(x,y) = \overline u_{K_0} +  \sum_{j=1}^2 b_j^{ (\ell) } \psi_j ( x,y ),
\end{equation}
where the coefficients ${\bm b}^{(\ell)} = ( b_1^{ (\ell) }, b_2^{ (\ell) } )$ are determined by solving the least-squares problems
$$
\min_{ {\bm b}^{(\ell)} \in \mathbb R^2 } \sum_{ K \in \mathbb N_\ell } \left| \frac{1}{ |K| } \iint_K \phi_1^{ (\ell) }(x,y) {\rm d} x {\rm d} y - \overline u_{K} \right|^2, \qquad 1\le \ell \le 4,
$$
where
\begin{equation*}
	\mathbb{N}_1:=\{K_0,K_1,K_2,K_3\},  \mathbb{N}_2:=\{K_0,K_1,K_{11},K_{12} \},  \mathbb{N}_3:=\{K_0,K_2,K_{21},K_{22} \},
	\mathbb{N}_4 :=\{K_0,K_3,K_{31},K_{32}\}
\end{equation*}
are four small stencils as shown in Fig.~\ref{fig:stencil}. Specifically,
$\mathbb{N}_1$ is called the central stencil; $\mathbb{N}_2$, $\mathbb{N}_3$, and $\mathbb{N}_4$ are three sectorial stencils, each of which consists of the target cell $K_0$ and its neighboring cells whose barycenters lie in the same sector. As shown in
Fig.~\ref{fig:stencil},
 the three sectors are divided by three lines $L_1$, $L_2$, and $L_3$ connecting the centroid and three vertices of $K_0$.

Now we compute the linear weights $\gamma_\ell$ and nonlinear weights $\varpi_\ell$ in \eqref{WENO-scalar}. Following \cite{zhu2018new}, we set the linear weights as $\gamma_0=0.96, \gamma_1=\gamma_2=\gamma_3=\gamma_4=0.01$ in our computations.
However, slightly different from \cite{zhu2018new}, we take the nonlinear weights as 
\begin{equation}\label{OUR-weights}
	\varpi_\ell = \frac{\delta_\ell }{\sum_{i=0}^4\delta_i}, \qquad \delta_\ell = \gamma_\ell \left(1+\frac{\tau^2}{(\beta_\ell+\epsilon)^2}\right),\qquad  \quad \ell=0,\cdots,4,
\end{equation}
where $\epsilon$ is a small positive number used to avoid the denominator being zero. The quantity     
 $\tau$ is defined as 
\begin{equation}\label{tau}
	\tau = \frac14(|\beta_0-\beta_1|+ |\beta_0-\beta_2| + |\beta_0-\beta_3| + |\beta_0-\beta_4|) ={\mathcal O}(|K|^{\frac32}),
\end{equation}
where $\tau ={\mathcal O}(|K|^{\frac32})$ follows from \cite[Page A908, Eq.~(2.13)]{zhu2018new}, 
and $\{\beta_i, 0 \le i \le 4\}$ are the smooth indicators which are defined in a classic way as in
\cite{hu1999}:
\begin{equation*}
\beta_0= \sum_{| {\bm \alpha} |=1}^2\iint_{K}|K|^{| {\bm \alpha} |-1} \left({\mathcal D}^{ {\bm \alpha} } \phi_2(x,y) \right)^2 {\rm d} x {\rm d} y, \quad
\beta_\ell = \sum_{| {\bm \alpha}|=1}\iint_{K} \left( {\mathcal D}^{\bm \alpha} \phi_1^{(\ell)} (x,y) \right)^2 {\rm d} x {\rm d} y,~1\le \ell \le 4,
\end{equation*}
where $\bm \alpha$ is a multi-index and $\mathcal D$ is the partial derivative operator; for example, when ${\bm \alpha}=(1,1)$, then $|{\bm \alpha}|=2$ and ${\mathcal D}^{\bm \alpha} \phi(x,y)=\partial^2 \phi/\partial x\partial y$.
In this paper, we take $\epsilon = |K_0|\times \max_{K \in \mathbb N_0}\{ |\overline u_K|^2 \}$ in \eqref{OUR-weights} to make the nonlinear weights $\varpi_\ell$ scaling-invariant (see Lemma \ref{lem:Pk}), so as to achieve  
the homogeneity of $\mathcal{R}_h^k$ in \eqref{eq:R-PI-HOM} (see Lemma \ref{lem:charactweno}). 
Note the nonlinear weights in \cite{zhu2018new} are not scaling-invariant (see Remark \ref{remark:weight}).

\begin{lemma}\label{lem:Pk}
	The nonlinear weights \eqref{OUR-weights} are scaling-invariant, namely, 
	for any given constant $\zeta>0$, if we scale the cell averages $\{ \overline u_K: K \in  \mathbb N_0 \}$ to $\{ \zeta \overline u_K: K \in  \mathbb N_0 \}$, then the corresponding weights $\varpi_\ell$ remain unchanged.   
	Consequently, 
	the operator $\mathcal P_{K_0}$ is homogeneous, namely, for any constant $\zeta>0$, 
	$\mathcal P_{K_0} ( \zeta\overline u ) = \zeta \mathcal P_{K_0} \overline u$.
\end{lemma}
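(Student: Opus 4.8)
The plan is to track how each ingredient of the nonlinear weights \eqref{OUR-weights} transforms under the scaling $\overline u_K \mapsto \zeta\overline u_K$ and to verify that all $\zeta$-dependence cancels in the ratio $\tau^2/(\beta_\ell+\epsilon)^2$. First I would establish that the reconstructed polynomials are positively homogeneous of degree one in the cell averages. Writing $\phi_2 = \overline u_{K_0} + \sum_j a_j \psi_j$ as in \eqref{key2678}, the least-squares problem for $\bm a$ has data $\{\overline u_K - \overline u_{K_0}\}$ and a coefficient matrix whose entries $\frac{1}{|K|}\iint_K \psi_j$ depend only on the geometry of the stencil, not on the data. Since a linear least-squares solution depends linearly on its data, scaling the data by $\zeta$ scales $\bm a$ by $\zeta$; as the constant term $\overline u_{K_0}$ also scales by $\zeta$, we obtain $\phi_2 \mapsto \zeta\phi_2$. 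The identical argument applied to \eqref{key4333} gives $\phi_1^{(\ell)} \mapsto \zeta\phi_1^{(\ell)}$ for every $\ell$.

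Next I would propagate this through to the smoothness indicators. Because each $\beta_\ell$ is a sum of integrals of squared derivatives of the degree-one-homogeneous polynomials, and both differentiation and integration are linear, every $\beta_\ell$ is homogeneous of degree two: $\beta_\ell \mapsto \zeta^2\beta_\ell$. Consequently $\tau = \tfrac14\sum_{\ell=1}^4|\beta_0-\beta_\ell|$ in \eqref{tau} scales as $\tau \mapsto \zeta^2\tau$, so $\tau^2 \mapsto \zeta^4\tau^2$. The decisive step is the scaling of $\epsilon$: with the chosen value $\epsilon = |K_0|\max_{K\in\mathbb N_0}\{|\overline u_K|^2\}$ one has $\epsilon \mapsto \zeta^2\epsilon$, matching exactly the degree-two scaling of the $\beta_\ell$. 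Hence $(\beta_\ell+\epsilon)^2 \mapsto \zeta^4(\beta_\ell+\epsilon)^2$, the ratio $\tau^2/(\beta_\ell+\epsilon)^2$ is invariant, and so each $\delta_\ell = \gamma_\ell(1+\tau^2/(\beta_\ell+\epsilon)^2)$ is unchanged. Normalizing, every $\varpi_\ell = \delta_\ell/\sum_i\delta_i$ is unchanged, which is the asserted scaling invariance.

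For the homogeneity of $\mathcal P_{K_0}$ I would substitute the two facts just obtained into \eqref{WENO-scalar}: the weights $\varpi_\ell$ are invariant while $\phi_2$ and every $\phi_1^{(\ell)}$ acquire a common factor $\zeta$, so each term of $\mathcal P_{K_0}\overline u$ scales linearly, giving $\mathcal P_{K_0}(\zeta\overline u) = \zeta\mathcal P_{K_0}\overline u$. The only genuinely delicate point — and the entire reason for the modification relative to \cite{zhu2018new} — is the matching of homogeneity orders inside $\delta_\ell$. Had $\epsilon$ been a fixed positive constant (the standard choice), it would not scale with $\zeta$, the cancellation in $\tau^2/(\beta_\ell+\epsilon)^2$ would fail, and the weights would become $\zeta$-dependent. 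Thus the main obstacle is structural rather than computational: one must recognize that $\tau^2$ and $(\beta_\ell+\epsilon)^2$ both carry degree four, which holds precisely because the chosen data-dependent $\epsilon$ inherits the degree-two scaling of the smoothness indicators.
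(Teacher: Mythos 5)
Your proposal is correct and follows essentially the same route as the paper's proof: establish degree-one homogeneity of $\phi_2$ and $\phi_1^{(\ell)}$, deduce $\beta_\ell \mapsto \zeta^2\beta_\ell$, $\tau \mapsto \zeta^2\tau$, and $\epsilon \mapsto \zeta^2\epsilon$, and conclude that $\delta_\ell$ and hence $\varpi_\ell$ are unchanged. The only difference is that you spell out why the least-squares reconstruction is linear in the data, a step the paper simply asserts by inspection.
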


\begin{proof}
Let $\phi_2(x,y;\zeta)$ and $\phi^{(\ell)}_1(x,y;\zeta), 1\le \ell \le4$ be the quadratic polynomial and four linear polynomials reconstructed from the scaled cell averages $\{ \zeta \overline u_K: K \in  \mathbb N_0 \}$, and $\beta_\ell^{ (\zeta)}$ be the corresponding smooth indicators.
Denote $\tau^{(\zeta)},\delta_\ell^{(\zeta)}$, and $\varpi_\ell^{(\zeta)}$ be the values calculated from $\beta_\ell^{(\zeta)}$ by using the equations \eqref{OUR-weights} and \eqref{tau}.  Observing that $\phi_2(x,y;\zeta) = \zeta \phi_2(x,y)$ and $\phi^{(\ell)}_1(x,y;\zeta) = \zeta \phi^{(\ell)}_1(x,y)$, we obtain that $\beta_\ell^{(\zeta)} = \zeta^2 \beta_\ell, 0\le \ell \le4$ and $\tau^{(\zeta)} = \zeta^2 \tau$. Thanks to 
$\epsilon^{(\zeta)}:= |K_0|\times \max_{K \in \mathbb N_0}\{ |\zeta \overline  u_K|^2 \}=\zeta^2 \epsilon,$  
we then have 
$$\delta^{(\zeta)}_\ell = \delta_\ell, \qquad \varpi^{(\zeta)}_\ell = \varpi_\ell.$$ 
This means the nonlinear weights \eqref{OUR-weights} are scaling-invariant.
It follows that $\mathcal P_{K_0} ( \zeta\overline u ) = \zeta \mathcal P_{K_0} \overline u$. 
\end{proof}

\begin{remark}\label{remark:weight}
It should be explained why we prefer to use the nonlinear weights \eqref{OUR-weights} different from that in \cite{zhu2018new}. 
With our above notations, the nonlinear weights used in \cite{zhu2018new} 
can be rewritten as
\begin{equation}\label{ZQweights}
	\widetilde \varpi_\ell = \frac{ \widetilde \delta_\ell }{\sum_{i=0}^4  \widetilde \delta_i}, \qquad \widetilde \delta_\ell = \gamma_\ell \left(1+\frac{\tau^2}{\beta_\ell+\epsilon}\right),\qquad   \ell=0,\cdots,4.
\end{equation}
Notice that our notation $\tau^2$ corresponds to the notation $\tau$ in \cite{zhu2018new}. The nonlinear weights \eqref{ZQweights} work well for many benchmark problems in \cite{zhu2018new}. However, the weights \eqref{ZQweights} are not scaling-invariant, even if we set $\epsilon = |K_0|\times \max_{K \in \mathbb N_0}\{ |\overline u_K|^2 \}$. 
This is because $(\tau^{(\zeta)} )^2 = \zeta^4\tau^2$ and $\beta^{(\zeta)}_\ell+\epsilon^{(\zeta)}=\zeta^2(\beta_\ell+\epsilon)$, so  
that $\tau^2/(\beta_\ell + \epsilon) + 1$ is ``not dimensionless''. 
It seems important to accommodate the scaling invariance, as the quantity $u$ may have very different scales/values for 
different characteristic variables and different problems. 
Because the nonlinear weights are used for suppressing potential numerical oscillations, they should be dimensionless 
and independent of the solution scales.  
Therefore, the scaling-invariant property and the resulting homogeneity of $\mathcal P_{K_0}$ are desirable and may be helpful for resolving multi-scale flow structures and suppressing nonphysical oscillations. 
This observation will be further confirmed by
numerical results in Examples \ref{1Driemann3} and \ref{1DMSP} of Section \ref{sec:examples}, where 
the two sets of nonlinear weights \eqref{OUR-weights} and \eqref{ZQweights} will be compared. 
Although our modification is proposed on unstructured meshes, it also applies to structured meshes.

\end{remark}

It is shown in \cite{zhu2018new} that $\beta_i= {\mathcal O} (|K|), i=0,\cdots,4$. It follows that $\frac{\tau^2}{(\beta_i+\epsilon)^2}= {\mathcal O} (|K|), i=0,\cdots,4$. 
 Therefore,  $\delta_i=\gamma_i(1+ {\mathcal O} (|K|))$ which gives $\varpi_i=\gamma_i(1+ {\mathcal O} (|K|))$. As a result, the accuracy of the above WENO reconstruction is third-order as expected. This verifies that our nonlinear weights \eqref{OUR-weights} also meet the accuracy requirement.

\subsubsection{Characteristic WENO reconstruction for RHD system}
One can apply the above WENO reconstruction to the RHD equations \eqref{eq:RHD}
either component-wisely or in local characteristic directions.
It has been widely realized that characteristic reconstruction
usually produces better nonoscillatory results for high-order schemes.
Therefore, we impose the WENO reconstruction on the local characteristic variables for the RHD system.

Assume that $\overline {\bf U}_K \in G_u$ for all $K \in {\mathcal T}_h$.
Then, by the algorithms that will be introduced in Section \ref{sec:recovery},
we can uniquely recover the corresponding primitive variables
\begin{equation}
	\overline {\bf W}_K =	{\bm {\mathcal W}} \left( \overline {\bf U}_K \right) \in G_w \quad \forall K \in {\mathcal T}_h.
\end{equation}
We would like to reconstruct, for every $K \in {\mathcal T}_h$, a polynomial vector function ${\bf P}_K({\bm x})$
satisfying
\begin{equation}\label{key134}
	\frac{1}{|K|}\iint_K {\bf P}_K({\bm x}) {\rm d} {\bm x} = \overline {\bf U}_K,
\end{equation}
so as to obtain
\begin{equation} \label{eq:RC113}
	{\bf U}_h ({\bm x}) = \mathcal{R}_h^k \overline {\bf U} := \sum_{ K \in {\mathcal T}_h }  {\bf P}_K({\bm x})  \chi_K({\bm x}), \qquad
	\chi_K({\bm x}) = \begin{cases}
		1,~~ & {\bm x} \in K,
		\\
		0,~~ & {\bm x} \notin K.
	\end{cases}
\end{equation}
The property \eqref{key134} implies that the reconstructed piecewise polynomial vector function ${\bf U}_h \in \overline {\mathbb{G}}_h^k$.

Based on local characteristic decomposition, we reconstruct
the polynomial vector ${\bf P}_{K_0}({\bm x})$ in \eqref{eq:RC113}
 for an arbitrary target cell $K_0 \in {\mathcal T}_h$ as follows:

\begin{description}
	\item[Step 1] For each normal direction ${\bf n}_j:=  {\bf n}^{(j)}_{K_0}$ of $K_0$, $j \in \{1,2,3\}$, do the following:
\end{description}
\begin{itemize}
	\item  Compute the local eigenvector matrix in the direction ${\bf n}_j$, i.e.,  ${\bf R}_{{\bf n}_j} (\overline{\bf U}_{K_0})$ and ${\bf R}^{-1}_{{\bf n}_j} (\overline{\bf U}_{K_0})$ according to the formulas in Proposition \ref{prop111}. 
	For the RHD system, ${\bf R}_{{\bf n}_j}$ and ${\bf R}^{-1}_{{\bf n}_j}$ cannot be explicitly expressed by the conservative variables, therefore, we have to first recover the primitive vector $\overline{\bf W}_{K_0}$ and then use the primitive variables to evaluate ${\bf R}_{{\bf n}_j}$ and ${\bf R}^{-1}_{{\bf n}_j}$.
	\item Project the cell averages $\{\overline {\bf U}_s: s \in \mathbb N_0 \}$
	into the local characteristic fields
	$$
	\overline {\bf Z}_s^{(j)} := {\bf R}^{-1}_{{\bf n}_j}  \overline {\bf U}_s \quad \forall s \in \mathbb N_0.
	$$
	\item Perform the scalar WENO reconstruction procedure $\mathcal P_{K_0}$, defined in
	 \eqref{WENO-scalar}, component-wisely to the cell averages $ \overline {\bf Z}^{(j)} := \{
	 \overline {\bf Z}_s^{(j)}: s \in \mathbb N_0
	 \}$ and obtain the polynomial approximation of the characteristic variables
	 $$
	 {\bf Z}_{K_0} ( {\bm x} ) = \mathcal P_{K_0} \overline {\bf Z}^{(j)}.
	 $$
	\item Project the polynomial vectors ${\bf Z}_{K_0}$ into the physical space of conservative variables
	$$
	{\bf P}_{K_0}^{(j)} ({\bm x}) = {\bf R}_{{\bf n}_j} {\bf Z}_{K_0} ( {\bm x} ).
	$$
\end{itemize}

\begin{description}
	\item[Step 2]
	The final reconstructed polynomial vector on the target cell $K_0$ is obtained by taking a weighted average of $\{ {\bf P}_{K_0}^{(j)}, 1\le j \le 3 \}$, i.e.
\begin{equation}\label{keyPPP}
		{\bf P}_{K_0} ({\bm x}) = \frac{ \sum_{j=1}^3 |K_j| {\bf P}_{K_0}^{(j)} ({\bm x}) }{ \sum_{j=1}^3 |K_j| }.
\end{equation}
\end{description}
One can verify that the reconstructed polynomial vector \eqref{keyPPP} satisfies \eqref{key134}.

Thanks to \eqref{eq:DML} and the homogeneity of the operator $\mathcal{P}_{K_0}$ proven in Lemma \ref{lem:Pk}, 
we immediately obtain that the characteristic WENO reconstruction operator $\mathcal{R}_h^k$ is also homogeneous.  
\begin{lemma} \label{lem:charactweno}
     For any constant $\zeta>0$, $ \mathcal{R}_h^k(\zeta  \bf \overline U) = \zeta \mathcal{R}_h^k(  \bf \overline U)$.
\end{lemma}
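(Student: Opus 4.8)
The plan is to trace the scaling factor $\zeta$ through the four substeps of \textbf{Step~1} and the averaging in \textbf{Step~2}, exploiting exactly two facts: the scale invariance of the eigenmatrices in \eqref{eq:DML} and the homogeneity of the scalar reconstruction $\mathcal{P}_{K_0}$ established in Lemma~\ref{lem:Pk}. It suffices to show that, for every target cell $K_0$, the reconstructed polynomial vector $\mathbf{P}_{K_0}$ defined in \eqref{keyPPP} that is computed from $\{\zeta\overline{\mathbf{U}}_s:s\in\mathbb{N}_0\}$ equals $\zeta$ times the one computed from $\{\overline{\mathbf{U}}_s:s\in\mathbb{N}_0\}$; assembling the local results over $K\in\mathcal{T}_h$ via \eqref{eq:RC113} then yields the claim.

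First I would observe that the only data-dependent ingredients in the whole construction are (i) the eigenmatrices $\mathbf{R}_{\mathbf{n}_j}(\overline{\mathbf{U}}_{K_0})$ and $\mathbf{R}^{-1}_{\mathbf{n}_j}(\overline{\mathbf{U}}_{K_0})$, and (ii) the nonlinear weights hidden inside $\mathcal{P}_{K_0}$; everything else (the barycenter, the basis functions $\psi_j$, the fixed linear weights $\gamma_\ell$, and the geometric averaging weights $|K_j|$ in \eqref{keyPPP}) depends only on the mesh and is unaffected by scaling the cell averages. Since the eigenmatrices are evaluated at $\overline{\mathbf{U}}_{K_0}$, identity \eqref{eq:DML} gives $\mathbf{R}_{\mathbf{n}_j}(\zeta\overline{\mathbf{U}}_{K_0})=\mathbf{R}_{\mathbf{n}_j}(\overline{\mathbf{U}}_{K_0})$ and likewise for the inverse, so the objects in (i) are left unchanged by the scaling.

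Next I would follow the characteristic projection. By linearity of the matrix product and the invariance just noted, the projected characteristic averages scale linearly, $\mathbf{R}^{-1}_{\mathbf{n}_j}(\zeta\overline{\mathbf{U}}_s)=\zeta\,\overline{\mathbf{Z}}_s^{(j)}$ for all $s\in\mathbb{N}_0$. Applying the scalar operator $\mathcal{P}_{K_0}$ component-wise to these scaled characteristic averages and invoking its homogeneity from Lemma~\ref{lem:Pk} yields $\mathcal{P}_{K_0}(\zeta\overline{\mathbf{Z}}^{(j)})=\zeta\,\mathbf{Z}_{K_0}$; here the scale invariance of the nonlinear weights in Lemma~\ref{lem:Pk} is precisely what makes the factor $\zeta$ come out cleanly. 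Projecting back with the (unchanged) matrix $\mathbf{R}_{\mathbf{n}_j}$ gives $\mathbf{P}_{K_0}^{(j)}\mapsto\zeta\,\mathbf{P}_{K_0}^{(j)}$ for each $j\in\{1,2,3\}$, and since the averaging weights $|K_j|$ in \eqref{keyPPP} are purely geometric, the weighted average in \textbf{Step~2} carries the common factor out front, $\mathbf{P}_{K_0}\mapsto\zeta\,\mathbf{P}_{K_0}$. Reassembling over all cells via \eqref{eq:RC113} completes the argument.

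This is a direct bookkeeping computation rather than one with a genuine obstacle; the only point requiring care — and the reason the earlier modification of the nonlinear weights was introduced — is the homogeneity of $\mathcal{P}_{K_0}$ in Lemma~\ref{lem:Pk}. Had the nonlinear weights not been scaling invariant (cf.\ the weights \eqref{ZQweights} discussed in Remark~\ref{remark:weight}), the step $\mathcal{P}_{K_0}(\zeta\overline{\mathbf{Z}}^{(j)})=\zeta\,\mathbf{Z}_{K_0}$ would fail and the entire chain would break. Thus the crux is simply to apply Lemma~\ref{lem:Pk} component-wise to the characteristic variables, in combination with the eigenmatrix invariance \eqref{eq:DML}.
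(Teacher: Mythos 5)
Your proposal is correct and follows exactly the same route as the paper, which deduces the lemma directly from the eigenmatrix scale invariance \eqref{eq:DML} together with the homogeneity of $\mathcal P_{K_0}$ from Lemma \ref{lem:Pk}; your write-up simply makes explicit the bookkeeping through the characteristic projection and the weighted average \eqref{keyPPP} that the paper leaves implicit.
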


	\subsection{Physical-constraint-preserving limiting operator $\Pi_h$}\label{sec:constraint-preserving}
Now we detail the 
operator $\Pi_h: \overline{\mathbb{G}}_h^k \to \mathbb{G}_h^k$.  
Let ${\bf U}_h = \sum_{ K \in {\mathcal T}_h }  {\bf P}_K({\bm x})  \chi_K({\bm x}) \in  \overline{\mathbb{G}}_h^k$ denote the  
reconstructed WENO solution with ${\bf P}_K({\bm x})=:( D_K({\bm x}), {\bm m}_K( {\bm x} ), E_K( {\bm x} ) )^\top$. 
Define $\overline{\bf U}_K =: ( \overline D_K, \overline {\bm m}_K, \overline E_K)^\top$. 
We denote the PCP limited solution by 
\begin{equation}\label{constraint-preservinglimited}
	\widetilde {\bf U}_h = {\Pi}_h {\bf U}_h =: \sum_{ K \in {\mathcal T}_h } \widetilde {\bf P}_K({\bm x})  \chi_K({\bm x}),
\end{equation}
where the limited polynomial vector $\widetilde {\bf P}_K({\bm x})= ( \widetilde D_K({\bm x}), \widetilde {\bm m}_K( {\bm x} ), \widetilde E_K( {\bm x} ) )^\top$ is 
given as follows. 

\begin{description}
	\item[Step 1] First, modify the mass density: 
	\begin{equation}\label{constraint-preservinglimit1}
		\widehat { D }_K ( {\bm x} ) = \theta_D \left( D_K( {\bm x}  ) - \overline D_K \right) + \overline D_K,\qquad  
		\theta_D :=\text{min} \left\{ \left|\frac{\overline{D}_K-\varepsilon_D}{\overline{D}_K-D_{\text{min}}} \right|,1 \right \},
	\end{equation}
	where $D_{\rm min}:=\min \left \{ D_K^{(1)}, \min_{jq} D_K({\bm x}_K^{(jq)})  \right \} $ with 
	\begin{equation}\label{key23452}
		D_K^{(1)}=\frac{1}{1-2\widehat{\omega}_1}\left(\overline{D}_K-\frac23\widehat{\omega}_1\sum\limits_{j=1}^3\sum\limits_{q=1}^Q\omega_{q}D_K({\bm x}_K^{(jq)})\right),
	\end{equation}
	and $\varepsilon_D$ is a small positive number introduced 
	to avoid the influence of the round-off error on the PCP property and may be taken as $\varepsilon_D = \min\{ 10^{-13}, \overline D_K \}$. 
	\item[Step 2] Then, modify 
	the polynomial vector $\widehat {\bf P}_K ({\bm x}) := \left( \widehat {D}_K( {\bm x} ), {\bm m}_K( {\bm x} ), E_K ({\bm x}) \right)^\top$ into 
	\begin{equation}\label{constraint-preservinglimit2}
		\widetilde {\bf P}_K ( {\bm x} ) = \theta_g 
		( \widehat {\bf P}_K ({\bm x}) - \overline {\bf U}_K ) + \overline {\bf U}_K, \qquad \theta_g := \min \left\{  \left| \frac{ g (\overline {\bf U}_K ) -\varepsilon_g }{ g(\overline {\bf U}_K  ) - g_{\rm min} }  \right|, 1
		\right\},
	\end{equation}
	where $ g_{\rm min} := \min \left\{ g( \widehat {\bf U}^{(2)} ), \min_{jq} g\left( \widehat {\bf P}_K ( {\bm x}^{(jq)} ) \right) \right \} $ with 
	\begin{equation}\label{key6732}
		\widehat {\bf U}^{(2)} :=\frac{1}{1-2\widehat{\omega}_1}\left(\overline{\vec{U}}_K-\frac23\widehat{\omega}_1\sum\limits_{j=1}^3\sum\limits_{q=1}^Q\omega_{q}\widehat{\bf P}_K({\bm x}_K^{(jq)})\right),
	\end{equation}
	and $\varepsilon_g$ is a small positive number introduced 
	to avoid the influence of the round-off error on the PCP property and may be taken as $\varepsilon_g = \min\{ 10^{-13}, g (\overline {\bf U}_K) \}$.
\end{description}

\begin{remark}\label{2Dquadrature}
	On any triangular cell $K$, one can construct a 2D quadrature rule,    
	which is exact for all polynomials $P \in \mathbb P^k(K)$, has positive weights, and includes all the edge Gaussian points $\{ {\bm x}_K^{(jq)} \}$ as a subset of the 2D quadrature points.  
	Zhang, Xia, and Shu \cite{zhang2012} constructed such a quadrature by using a Dubinar transform from rectangles to triangles, which gives 
	$$
	\frac{1}{ |K| } \iint_K P ( {\bm x} ) {\rm d} {\bm x} 
	= \frac23\widehat{\omega}_1\sum\limits_{j=1}^3\sum\limits_{q=1}^Q\omega_{q} P ({\bm x}_K^{(jq)}) + \sum_{q=1}^{\widetilde Q} \widetilde \omega_{q} P ( \widetilde {\bm x}_K^{ (q) } ),
	$$
	where $Q=2$ for $k=2$, $\{ \widetilde {\bm x}_K^{ (q) }, 1\le q \le \widetilde Q \}$ are the other quadrature points in $K$ with $\widetilde \omega_{q}>0$ being the associated weights satisfying 
	$
	\frac23\widehat{\omega}_1\sum\limits_{j=1}^3\sum\limits_{q=1}^Q\omega_{q}  + \sum_{q=1}^{\widetilde Q} \widetilde \omega_{q} = 2 \widehat{\omega}_1 + \sum_{q=1}^{\widetilde Q} \widetilde \omega_{q} = 1.
	$
	It follows that  
	\begin{equation}\label{key673211}
		\frac{1}{1-2\widehat{\omega}_1}\left( \frac{1}{ |K| } \iint_K P ( {\bm x} ) {\rm d} {\bm x}-\frac23\widehat{\omega}_1\sum\limits_{j=1}^3\sum\limits_{q=1}^Q\omega_{q} P_K({\bm x}_K^{(jq)})\right) = \sum_{q=1}^{\widetilde Q} 
		\frac{ \widetilde \omega_{q} }{ 1 - 2 \widehat{\omega}_1 } P ( \widetilde {\bm x}_K^{ (q) } ),
	\end{equation}
	with $\sum_{q=1}^{\widetilde Q} 
	\frac{ \widetilde \omega_{q} }{ 1 - 2 \widehat{\omega}_1 } = 1$. 
	The quadrature points are illustrated in Fig.~\ref{fig:quadraturepoints},  where the (orange) solid points denote $\{{\bm x}_K^{(jq)}\}$ and the (black) hollow circles stand for $\{{\widetilde{\bm x}}_K^{(q)}\}$. It is worth noting that this 2D quadrature is not applied to evaluate any integrals, but merely employed in our PCP limiter and theoretical analysis. 
 \end{remark}

\begin{figure}[htbp]
	\centering
	\includegraphics[width=0.46\textwidth]{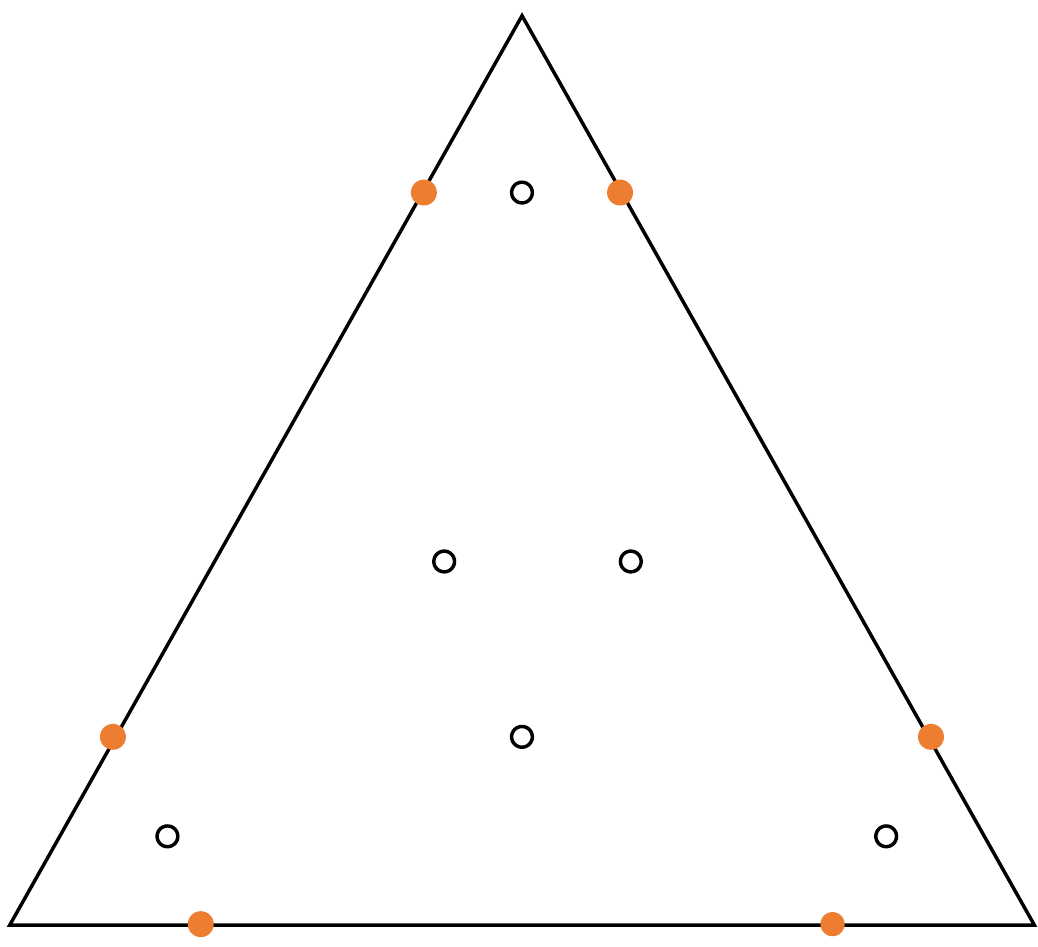}
	\caption{The quadrature points for $Q=2$ and $k=2$. }
	\label{fig:quadraturepoints}
\end{figure}

\begin{proposition}
	The limited solution $\widetilde {\bf U}_h$, defined in \eqref{constraint-preservinglimited} 
	with $\widetilde {\bf P}_K ( {\bm x} )$ given by \eqref{constraint-preservinglimit2}, satisfies 
	$$
	\widetilde {\bf U}_h \in \mathbb G_h^k, \qquad \quad \frac{1}{|K|} \iint_K \widetilde {\bf U}_h {\rm d} {\bm x} = \overline {\bf U}_K.
	$$
\end{proposition}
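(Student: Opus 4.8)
The plan is to establish the two assertions separately: first the conservation identity, which is immediate, and then the membership $\widetilde{\bf U}_h \in \mathbb{G}_h^k$, which is the substantive part. For conservation, I would observe that both limiting steps \eqref{constraint-preservinglimit1} and \eqref{constraint-preservinglimit2} are affine maps of the form $X \mapsto \theta(X-\overline X)+\overline X$ that fix the cell mean. Since the reconstruction satisfies $\frac{1}{|K|}\iint_K {\bf P}_K \, {\rm d}{\bm x} = \overline{\bf U}_K$ by \eqref{key134}, the cell average of $\widehat{\bf P}_K$ stays $\overline{\bf U}_K$ after Step~1 (only the density component is rescaled, and its mean is preserved), and then the cell average of $\widetilde{\bf P}_K$ stays $\overline{\bf U}_K$ after Step~2. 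This yields $\frac{1}{|K|}\iint_K \widetilde{\bf U}_h\, {\rm d}{\bm x} = \overline{\bf U}_K$ and, together with the hypothesis $\overline{\bf U}_K \in G_u$, also verifies the cell-average requirement built into $\overline{\mathbb{G}}_h^k$.

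For membership I would use the characterization $G_u = G_u^{(1)} = \{\,D>0,\ g({\bf U})>0\,\}$ from Lemma~\ref{lem:Gu1}, so that it suffices to control the density and the concave functional $g$ at the edge Gauss points ${\bm x}_K^{(jq)}$ and at the interior node value appearing in the definition of $\mathbb{G}_h^k$. Step~1 handles the density by the standard scaling argument: writing $\widehat{D}_K = \theta_D D_K + (1-\theta_D)\overline{D}_K$ as a convex combination and using $\overline{D}_K \geq \varepsilon_D$ together with the explicit choice of $\theta_D$, I would show $\widehat{D}_K({\bm x}_K^{(jq)}) \geq \varepsilon_D$ and, since $D_{\rm min}$ also includes $D_K^{(1)}$ of \eqref{key23452}, that the density component of $\widehat{\bf U}^{(2)}$ equals $\theta_D(D_K^{(1)}-\overline{D}_K)+\overline{D}_K \geq \varepsilon_D$; the last identity holds because the edge-average defining $\widehat{\bf U}^{(2)}$ in \eqref{key6732} and the affine density scaling commute.

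The $g$-constraint is then enforced in Step~2, and here concavity of $g$ is essential rather than a naive linear bound. Writing each limited value $\widetilde{\bf P}_K({\bm x}) = \theta_g\,\widehat{\bf P}_K({\bm x}) + (1-\theta_g)\overline{\bf U}_K$ as a convex combination and applying concavity gives $g(\widetilde{\bf P}_K({\bm x})) \geq \theta_g\, g(\widehat{\bf P}_K({\bm x})) + (1-\theta_g)\, g(\overline{\bf U}_K)$; since $g(\widehat{\bf P}_K({\bm x})) \geq g_{\rm min}$ at every edge Gauss point and (again by commutation of the affine Step-2 map with the edge-average) also at $\widehat{\bf U}^{(2)}$, the right-hand side is bounded below by $\theta_g g_{\rm min} + (1-\theta_g)g(\overline{\bf U}_K)$. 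The choice of $\theta_g$ is exactly the largest value making this quantity $\geq \varepsilon_g > 0$ (using $g(\overline{\bf U}_K) \geq \varepsilon_g$). Simultaneously, density positivity survives Step~2 because $\widetilde{D}_K$ is a convex combination of $\widehat{D}_K \geq \varepsilon_D$ and $\overline{D}_K \geq \varepsilon_D$. Hence the edge Gauss values and the node value both satisfy $D>0$ and $g>0$, i.e. lie in $G_u$; combined with conservation this delivers all three defining conditions of $\mathbb{G}_h^k$.

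The main obstacle I anticipate is the bookkeeping coupling the two affine steps with the interior node value: one must verify that the density-scaled node quantity $\widehat{\bf U}^{(2)}$ in \eqref{key6732} has positive density (so that $g(\widehat{\bf U}^{(2)})$ corresponds to a genuine state), and that the node value of the final limited polynomial equals $\theta_g(\widehat{\bf U}^{(2)} - \overline{\bf U}_K) + \overline{\bf U}_K$, so that the third condition of $\mathbb{G}_h^k$ is precisely what Step~2 controls. This rests on the fact that the edge-averaging operator defining the node value is affine and therefore commutes with both scalings; once this commutation is checked, the density and concavity estimates apply verbatim to the node value. A final routine point is to confirm $1 - 2\widehat{\omega}_1 > 0$ (true since $\widehat{\omega}_1 = 1/6$ for $k=2$), so that the node value is a legitimate state at which the constraints are imposed.
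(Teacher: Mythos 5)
Your proposal is correct and follows essentially the same route as the paper's proof: conservation from the mean-preserving affine form of both limiting steps, the characterization $G_u=G_u^{(1)}$, the explicit scaling choice of $\theta_D,\theta_g$ for the lower bounds, concavity (Jensen) of $g$ for Step~2, and the affine commutation with the edge-average functional (which the paper phrases via the interior-quadrature identity \eqref{key673211}) to handle the node value $\widehat{\bf U}^{(2)}$. The only cosmetic difference is that you treat the density in two stages (Step~1 gives $\widehat D_K\ge\varepsilon_D$, Step~2 preserves it as a convex combination with $\overline D_K\ge\varepsilon_D$), whereas the paper composes the two scalings into the single factor $\theta_g\theta_D$; both are valid.
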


\begin{proof}
	According the local scaling nature of the PCP limiter, one has 
	\begin{align*}
		& \frac{1}{|K|} \iint_K \widehat D_K ({\bm x}) {\rm d} {\bm x}	
		= \theta_D \left( \frac{1}{|K|} \iint_K  D_K ({\bm x}) {\rm d} {\bm x} - \overline D_K  \right) + \overline D_K = \overline D_K, \\
		&
		\frac{1}{|K|} \iint_K \widetilde {\bf U}_h ({\bm x}) {\rm d} {\bm x}
		= \frac{1}{|K|} \iint_K \widetilde {\bf P}_K ({\bm x}) {\rm d} {\bm x}
		= \theta_g \left( \frac{1}{|K|} \iint_K \widehat {\bf P}_K ({\bm x}) {\rm d} {\bm x} - \overline {\bf U}_K \right) + \overline {\bf U}_K = \overline {\bf U}_K. 
	\end{align*}
	Note that 
	\begin{align*}
		\widetilde D_K ( {\bm x}^{(jq)} ) & = \theta_g \left( 
		\widehat D_K ( {\bm x}^{(jq)} ) - \overline D_K 
		\right)	+ \overline D_K 
		\\
		& =\theta_g \theta_D \left( 
		D_K ( {\bm x}^{(jq)} ) - \overline D_K 
		\right)	+ \overline D_K 
		\\
		& \ge \theta_g \theta_D \left( 
		D_{\rm min} - \overline D_K 
		\right)	+ \overline D_K 
		\\
		& \ge - \theta_D \left| D_{\rm min} - \overline D_K  \right| + ( \overline D_K - \varepsilon_D ) + \varepsilon_D \ge \varepsilon_D > 0,
	\end{align*}
	and applying Jensen's inequality to the concave function $g (\vec{U})=E-\sqrt{D^2+\|{\bm m}\|^2}$, we get 
	\begin{align*}
		g \left(	\widetilde {\bf P}_K ( {\bm x}^{(jq)} ) \right) & =
		g \left(  \theta_g	\widehat {\bf P}_K ( {\bm x}^{(jq)} ) + (1-\theta_g) 
		\overline {\bf U}_K \right)
		\\
		& \ge    \theta_g g \left(	\widehat {\bf P}_K ( {\bm x}^{(jq)} )  \right) + (1-\theta_g)   g \left(
		\overline {\bf U}_K \right)
		\\
		& \ge    \theta_g g_{\rm min} + (1-\theta_g)   g \left(
		\overline {\bf U}_K \right) 
		\\
		& = \theta_g  \left( 
		g_{\rm min} - g \left(
		\overline {\bf U}_K \right) 
		\right)	+ g \left(
		\overline {\bf U}_K \right) 
		\\
		& \ge - \theta_g \left|  g_{\rm min} - g \left(
		\overline {\bf U}_K \right)  \right| + \left(  g \left(
		\overline {\bf U}_K \right) - \varepsilon_g \right) + \varepsilon_g \ge \varepsilon_g > 0.
	\end{align*}
	Thus we have $\widetilde {\bf P}_K ( {\bm x}^{(jq)} ) \in G_u^{(1)} = G_u$ for all $j$, $q$,  and $K$.

	Thanks to the identity \eqref{key673211}, we obtain 
	\begin{align*}
		& \frac{1}{1-2\widehat{\omega}_1}\left(\overline{D}_K-\frac23\widehat{\omega}_1\sum\limits_{j=1}^3\sum\limits_{q=1}^Q\omega_{q} \widetilde D_K({\bm x}_K^{(jq)})\right)
		=  \sum_{q=1}^{\widetilde Q} 
		\frac{ \widetilde \omega_{q} }{ 1 - 2 \widehat{\omega}_1 } \widetilde D_K ( \widetilde {\bm x}_K^{ (q) } ) 
		\\
		&\qquad = \theta_D {\theta_g} 
		\left(
		\sum_{q=1}^{\widetilde Q} 
		\frac{ \widetilde \omega_{q} }{ 1 - 2 \widehat{\omega}_1 }  D_K ( \widetilde {\bm x}_K^{ (q) } ) - \overline D_K 
		\right) +  \overline D_K 
		\\
		& \qquad  = \theta_D \theta_g  \left( D_K^{(1)} - \overline D_K \right) +   \overline D_K
		\\
		& \qquad  \ge \theta_D \theta_g  \left( D_{\rm min} - \overline D_K \right) +   \overline D_K > 0,
	\end{align*}
	and 
	\begin{align*}
		& g \left(	\frac{1}{1-2\widehat{\omega}_1}\Big(\overline{\bf U}_K-\frac23\widehat{\omega}_1\sum\limits_{j=1}^3\sum\limits_{q=1}^Q\omega_{q} \widetilde {\bf P}_K({\bm x}_K^{(jq)})\Big) \right) 
		= g \left( \sum_{q=1}^{\widetilde Q} 
		\frac{ \widetilde \omega_{q} }{ 1 - 2 \widehat{\omega}_1 } \widetilde {\bf P}_K ( \widetilde {\bm x}_K^{ (q) } )  \right) 
		\\
		& \qquad  = g \left( \theta_g \Big( \sum_{q=1}^{\widetilde Q} 
		\frac{ \widetilde \omega_{q} }{ 1 - 2 \widehat{\omega}_1 } \widehat {\bf P}_K ( \widetilde {\bm x}_K^{ (q) } ) \Big) + (1-\theta_g) \overline {\bf U}_K  \right) 
		\\
		& \qquad  \ge \theta_g g \left( \sum_{q=1}^{\widetilde Q} 
		\frac{ \widetilde \omega_{q} }{ 1 - 2 \widehat{\omega}_1 } \widehat {\bf P}_K ( \widetilde {\bm x}_K^{ (q) } ) \right) +  (1-\theta_g) g \left( \overline {\bf U}_K  \right) 
		\\
		&\qquad  = \theta_g \left( g( \widehat{\bf U}^{(2)} ) - g \left( \overline {\bf U}_K  \right)   \right) + g \left( \overline {\bf U}_K  \right) 
		\\
		& \qquad  \ge \theta_g \left( g_{\rm min} - g \left( \overline {\bf U}_K  \right)   \right) + g \left( \overline {\bf U}_K  \right)  > 0
	\end{align*}
by applying Jensen's inequality to the concave function $g (\vec{U})$.
	Hence, we have 
	$$\frac{1}{1-2\widehat{\omega}_1}\Big(\overline{\bf U}_K-\frac23\widehat{\omega}_1\sum\limits_{j=1}^3\sum\limits_{q=1}^Q\omega_{q} \widetilde {\bf P}_K({\bm x}_K^{(jq)})\Big) \in G_u^{(1)} = G_u \qquad \forall K \in {\mathcal T}_h,
	$$
	which along with $\widetilde {\bf P}_K ( {\bm x}^{(jq)} ) \in G_u$ implies $\widetilde {\bf U}_h \in \mathbb G_h^k$. The proof is completed. 
\end{proof}

If we ignore the effects of $\varepsilon_D$ and $\varepsilon_g$ (they are $\le 10^{-13}$ and their effects are close to round-off errors), then we have 
\begin{equation}\label{eq:PiHOM}
	{\Pi_h} ( \zeta {\bf U}_h ) = \zeta {\Pi_h} (  {\bf U}_h )
\end{equation}
for any constant $\zeta >0$.

\begin{remark}
	It is worth noting that the above limiter is valid only when the reconstructed WENO solution $ {\bf U}_h \in  \overline{\mathbb{G}}_h^k$, which is ensured by \eqref{eq:rec1} and the PCP property $\overline {\bf U}_K \in G_u$ obtained in the prior Runge--Kutta stage or time-step. Similar to the bound-preserving limiters in \cite{zhang2010,zhang2010b,zhang2012,QinShu2016,ZHANG2017301,WuTang2017ApJS}, our PCP limiter also does not destroy the high-order accuracy of the reconstructed WENO solution; this will be further confirmed by numerical results in Example \ref{ex:accurary} of Section \ref{sec:examples}.  
	A similar PCP limiter was proposed in \cite{QinShu2016} for the DG methods on structured meshes. A challenge of extending such a limiter from structured meshes to unstructured meshes is to construct the 2D quadrature mentioned in Remark \ref{2Dquadrature}, which was addressed by Zhang, Xia, and Shu in \cite{zhang2012}. In addition, the present PCP limiter is different from the one in \cite{QinShu2016} in two aspects: (i) our PCP limiter uses the concavity \cite{WuTang2015}  of $g({\bf U})$  and thus avoids solving a quadratic equation; (ii) our limiter is 
		motivated by the simplified limiter in  \cite{ZhangShuReview}, which only involves the quadrature points on the cell edges and avoids the use of the interior quadrature points in \eqref{key673211}. Note that the present PCP limiter modifies the reconstructed solution polynomials with locally scaling, and thus 
		significantly differs from the PCP flux limiter in \cite{WuTang2015} which modifies the high-order numerical fluxes based on a PCP first-order numerical flux. 
  
\end{remark}

\subsection{Convergence-guaranteed algorithms for primitive variables recovery}\label{sec:recovery}
For any given conservative vector ${\bf U}\in G_u$, we need to recover the corresponding primitive vector ${\bf W}=\mathcal{W}({\bf U})\in G_w$. 
This procedure requires to solve a nonlinear algebraic equation \eqref{eq:Eqforp} by some root-finding algorithms, since 
the function $\mathcal{W}$ cannot be explicitly formulated due to the highly nonlinear relationship between ${\bf U}$ and ${\bf W}$. 
In this subsection, we will present three iterative algorithms, which are  provably convergent, for the recovery of primitive quantities from admissible conservative variables. 	

Before discussing our algorithms, we first look into the unique solvability of the positive solution to the nonlinear equation \eqref{eq:Eqforp}. 
Assume that ${\bf U} = (D, {\bm m}, E)^\top \in G_u^{(1)}$, we have 
$$
D>0, \qquad E> \sqrt{ D^2 + \| {\bm m} \|^2 }. 
$$
Note that  $\Phi_{ \bf U}(p) \in C^1[0,+\infty)$. We obtain  
$$
\Phi_{ \bf U}^{\prime}(p) = \frac{1}{\Gamma-1}-\frac{\|{\bm m}\|^2}{(E+p)^2}\left(1-\frac{D}{\sqrt{(E+p)^2-\|{\bm m}\|^2}}\right)\geq 1- \frac{\|{\bm m}\|^2}{(E+p)^2}>0 \quad \forall p \in [0,+\infty),
$$
where $1< \Gamma \le 2$ and $D>0$ have been used. Thus, the function $\Phi_{ \bf U}(p)$ of $p$ is strictly monotonically increasing in the interval $[0,+\infty)$. Besides, we observe that  
$$
\Phi_{ \bf U}(0) =  \left(D-\sqrt{E^2-\|\bm m\|^2}\right)\sqrt{1-\frac{\|\bm m\|^2}{E^2}}<0, 
$$
and $\lim\limits_{p\rightarrow +\infty} \Phi_{ \bf U}(p) = +\infty $ because $\lim\limits_{p\rightarrow +\infty}  \frac{\Phi_{ \bf U}(p)}p = \frac{1}{\Gamma-1}>0$. According to the Intermediate Value Theorem, there exists a unique positive pressure $p({\bf U})$ such that $\Phi_{ \bf U}( p({\bf U}) ) = 0$. By equation \eqref{eq:vUdU}, we obtain the velocity ${\bm v}({\bf U})$ and density $\rho({\bf U})$ satisfying $\|{\bm v}({\bf U})\|<1$ and $\rho({\bf U}) > 0$, respectively. 
Therefore, for any ${\bf U}\in G_u = G_u^{(1)}$, we have a unique 
${\bf W}=\mathcal{W}({\bf U})\in G_w$. 
On the other hand, for any ${\bf W} \in G_w$, the corresponding conservative vector ${\bf U} 
\in G_u = G_u^{(1)}$ is uniquely defined by \eqref{eq:UU}. We conclude: 

\begin{lemma}
	The operator $\mathcal{W}:G_u \rightarrow G_w$ is bijective.
\end{lemma}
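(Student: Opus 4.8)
The plan is to prove bijectivity by exhibiting an explicit two-sided inverse. The discussion preceding the lemma already establishes that $\mathcal{W}$ is well-defined on $G_u$: for every ${\bf U} \in G_u = G_u^{(1)}$ the equation $\Phi_{\bf U}(p) = 0$ has a unique positive root $p({\bf U})$, and then \eqref{eq:vUdU} produces a unique ${\bf W} = \mathcal{W}({\bf U}) \in G_w$. It therefore remains to introduce the candidate inverse, namely the forward map $\mathcal{U}: G_w \to G_u$ defined by the transformation \eqref{eq:UU}, and to verify that $\mathcal{W}$ and $\mathcal{U}$ are mutual inverses; the bijectivity of $\mathcal{W}$ then follows at once, since $\mathcal{U} \circ \mathcal{W} = \mathrm{id}_{G_u}$ yields injectivity and $\mathcal{W} \circ \mathcal{U} = \mathrm{id}_{G_w}$ yields surjectivity.

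First I would check that $\mathcal{U}$ lands in the correct set. Given ${\bf W} = (\rho, {\bm v}, p) \in G_w$, the quantities $D = \rho\gamma$, ${\bm m} = \rho H \gamma^2 {\bm v}$, and $E = \rho H \gamma^2 - p$ are well-defined with $D > 0$; the membership ${\bf U} = \mathcal{U}({\bf W}) \in G_u$ will be a consequence of the identity $\mathcal{W}(\mathcal{U}({\bf W})) = {\bf W}$, because the defining constraints of $G_u$ are exactly $\rho({\bf U}) > 0$, $p({\bf U}) > 0$, and $\|{\bm v}({\bf U})\| < 1$.

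The heart of the argument is the identity $\mathcal{W} \circ \mathcal{U} = \mathrm{id}_{G_w}$. Writing ${\bf U} = \mathcal{U}({\bf W})$ and using $E + p = \rho H \gamma^2$, one finds $\|{\bm m}\|^2/(E+p)^2 = \|{\bm v}\|^2$, hence $\sqrt{1 - \|{\bm m}\|^2/(E+p)^2} = 1/\gamma$ and $\|{\bm m}\|^2/(E+p) = \rho H \gamma^2 \|{\bm v}\|^2$. Substituting these into \eqref{eq:Eqforp} and invoking the ideal equation of state through $\rho H = \rho + \frac{p}{\Gamma-1} + p$, I expect a complete cancellation among the terms $\frac{p}{\Gamma-1}$, $p$, $\rho H$, and $\rho$, giving $\Phi_{\bf U}(p) = 0$. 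By the uniqueness of the positive root established in the preceding paragraph, the recovered pressure equals $p$, and then \eqref{eq:vUdU} returns ${\bm v}$ and $\rho$ exactly, so $\mathcal{W}(\mathcal{U}({\bf W})) = {\bf W}$.

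For the reverse composition $\mathcal{U} \circ \mathcal{W} = \mathrm{id}_{G_u}$, I would start from ${\bf U} \in G_u$ with recovered ${\bf W} = \mathcal{W}({\bf U})$ and form $\mathcal{U}({\bf W})$. From the recovery formulas one computes $\gamma = (E+p)/\sqrt{(E+p)^2 - \|{\bm m}\|^2}$ and $\rho = D\sqrt{(E+p)^2 - \|{\bm m}\|^2}/(E+p)$, whence $\rho\gamma = D$ reproduces the first component immediately. For the remaining components it suffices to show $\rho H \gamma^2 = E + p$, since this gives $m_i = \rho H \gamma^2 v_i$ and $E = \rho H \gamma^2 - p$; and precisely this relation is encoded in $\Phi_{\bf U}(p({\bf U})) = 0$, as a short rearrangement of \eqref{eq:Eqforp} using the ideal EOS shows. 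Having both compositions equal the identity establishes that $\mathcal{W}$ is a bijection with inverse $\mathcal{U}$. I expect the main obstacle to be the algebraic verification that $\Phi_{\bf U}(p) = 0$ in the forward-then-recover direction: it relies on correctly expanding the specific enthalpy via $e = p/((\Gamma-1)\rho)$ and tracking the exact cancellation, together with the companion rearrangement identifying $\Phi_{\bf U}(p({\bf U})) = 0$ with $\rho H \gamma^2 = E+p$ in the reverse direction.
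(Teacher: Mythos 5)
Your proposal is correct and follows essentially the same route as the paper: the text preceding the lemma establishes that the strict monotonicity of $\Phi_{\bf U}$ yields a unique positive root for every ${\bf U}\in G_u=G_u^{(1)}$, hence a well-defined ${\bf W}={\bm{\mathcal W}}({\bf U})\in G_w$, and then simply notes that the forward transformation \eqref{eq:UU} uniquely assigns to each ${\bf W}\in G_w$ its conservative vector in $G_u$, i.e.\ it is the two-sided inverse. You carry out the same argument, only spelling out explicitly the algebraic cancellations (via $E+p=\rho H\gamma^2$ and the ideal equation of state) that the paper leaves implicit.
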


As shown by the above analysis, the positive solution $p({\bf U})$ to the nonlinear equation \eqref{eq:Eqforp} is uniquely solvable, provided that ${\bf U} \in G_u^{(1)}=G_u$. 
However, in general it is very difficult (if not impossible) to  analytically obtain the root $p({\bf U})$. Some  root-finding algorithms have to be used to numerically compute $p({\bf U})$. 
The existence and uniqueness of the positive root are not sufficient to ensure the 
convergence of root-finding algorithms and the positivity of the numerical root. 
Therefore, a convergent root-finding algorithm, which guarantees 
the uniquely positive numerical root for the nonlinear equation \eqref{eq:Eqforp}, is highly desirable for obtaining a provably PCP scheme.

In the following, we discuss three iterative algorithms for solving the nonlinear equations \eqref{eq:Eqforp}. 
These three algorithms are provably convergent to recover a 
positive pressure as long as ${\bf U}\in G_u$.

\begin{algo}[Bisection algorithm]
	The monotonicity of $\Phi_{ \bf U}(p)$ on the interval $[0,+\infty)$ motivates us to consider the bisection method. 
	First, we need to seek an explicit bounded interval for $p({\bf U})$. Note that 
	\begin{equation*}
		\Phi_{ \bf U}  ( p ( {\bf U} ) ) = \frac{p  ( {\bf U} )  }{\Gamma - 1} - E + 
		\frac{ \| {\bm m} \|^2 }{E+p ( {\bf U} )  } 
		+ D \sqrt{  1 - \frac{ \| {\bm m} \|^2 }{ (E+p ( {\bf U} ) )^2 }  } =0,
	\end{equation*}	
	which implies 
	\begin{align*}
		p({\bf U}) &= (\Gamma -1) \left( E- \frac{ \| {\bm m} \|^2 }{E+p ( {\bf U} )  } 
		- D \sqrt{  1 - \frac{ \| {\bm m} \|^2 }{ (E+p ( {\bf U} ) )^2 }  } \right) 
		\\
		& \le (\Gamma -1) \left( E 
		- D \sqrt{ 1- \frac{ \| {\bm m} \|^2 }{ E^2 } } \right) = : p_R^{(0)}.
	\end{align*}
	Let $p_L^{(0)}=0$, then $p({\bf U}) \in ( p_L^{(0)}, p_R^{(0)}  )$. The bisection method proceeds as follows:  
	\begin{equation*}
		\left( p_L^{(n)},~p_R^{(n)}  \right) = 
		\begin{cases}
			( p_L^{(n-1)},~p^{(n-1)}  ), & \mbox{if~} \Phi_{ \bf U}  ( p^{(n-1)} ) > 0, 
			\\
			( p^{(n-1)},~p_R^{(n-1)}  ), & \mbox{if~} \Phi_{ \bf U}  ( p^{(n-1)} ) \le 0,
		\end{cases}
		\quad p^{(n-1)} := \frac{ p_L^{(n-1)} + p_R^{(n-1)} }2,\quad n=1,2,\dots
	\end{equation*}
	It is easy to show that $p({\bf U}) \in ( p_L^{(n)}, p_R^{(n)}  )$ and
	$$
	\big| p^{(n)} - p({\bf U})  \big| \le 
	\frac{ p_R^{(0)} }{2^{n+1}}, 
	$$	
	which indicates the convergence $\lim \limits_{n \to +\infty} p^{(n)} = p({\bf U})$.
\end{algo}

\begin{algo}[Fixed-point iteration algorithm]
	Motivated by \cite{marquina2019capturing}, we consider the following iterative method for solving the nonlinear equations  \eqref{eq:Eqforp}:
	\begin{equation} \label{key5633}
		\begin{aligned}
			p^{(0)} &= \frac12  (\Gamma -1) \left( E 
			- D \sqrt{ 1- \frac{ \| {\bm m} \|^2 }{ E^2 } } \right) > 0,
			\\
			p^{(n)} &= -(\Gamma -1) \Phi_{\bf U}(  p^{(n-1)}  ) +  p^{(n-1)}, \qquad n=1,2,\dots
		\end{aligned}
	\end{equation}
	For any $p\ge 0$ it holds 
	\begin{align} \nonumber
		0\le -(\Gamma -1) \Phi_{\bf U}'(p) + 1  &= (\Gamma -1) \frac{\|{\bm m}\|^2}{(E+p)^2}\left(1-\frac{D}{\sqrt{(E+p)^2-\|{\bm m}\|^2}}\right) 
		\\  \nonumber
		& \le (\Gamma -1) \frac{\|{\bm m}\|^2}{E^2} := \delta < 1,
	\end{align}
	which implies that 
	\begin{equation}\label{key65443}
		\max_{ 0 \le p < +\infty }	\left |-(\Gamma -1) \Phi_{\bf U}'(p) + 1 \right | \le \delta < 1,
	\end{equation}
	and that 
	$-(\Gamma -1) \Phi_{\bf U}(p) + p $ is a monotonically increasing function of $p$ in the interval $[0,+\infty)$. Thus, if $p^{(n-1)}>0$, we have 
	$$
	p^{(n)} \ge -(\Gamma -1) \Phi_{\bf U}(  0  ) +  0 =  (\Gamma-1) \left( 
	E - \frac{ \| {\bm m} \|^2 }{E} - D \sqrt{ 1 - \frac{ \| {\bm m} \|^2 }{E^2 }  }
	\right) > 0,
	$$
	where $0<D<\sqrt{E^2-\| {\bm m} \|^2}$ has been used. By induction, we obtain $p^{(n)}>0$ for all $n\ge 0$. The error for the iteration \eqref{key5633} can be estimated as follows:
	\begin{align*}
		\left| p^{(n)} - p({\bf U}) \right| &= \left| 
		\left( -(\Gamma -1) \Phi_{\bf U}(  p^{(n-1)}  ) +  p^{(n-1)} \right) 
		- \left( -(\Gamma -1) \Phi_{\bf U}(  p ({\bf U}) ) +  p ({\bf U}) \right) 
		\right|
		\\
		& =  \left |-(\Gamma -1) \Phi_{\bf U}'( \xi ) + 1 \right | \left| p^{(n-1)} - p({\bf U}) \right|, 
		\\
		& \le \delta \left| p^{(n-1)} - p({\bf U}) \right|,
	\end{align*}
	where $0< \min\{ p^{(n-1)},  p ({\bf U}) \} \le \xi \le \max\{ p^{(n-1)},  p ({\bf U}) \}$;  
	we have sequentially used $\Phi_{\bf U}(  p ({\bf U}) )=0$, the Mean Value Theorem, $p^{(n-1)}>0$, and \eqref{key65443}. Recursively using the above estimate gives 
	$$
	\left| p^{(n)} - p({\bf U}) \right| \le \delta^2 \left| p^{(n-2)} - p({\bf U}) \right| \le \dots \le \delta^n \left| p^{(0)} - p({\bf U}) \right| \le \delta^n \frac{ p_R^{(0)} }2,
	$$ 
	which indicates the convergence $\lim \limits_{n \to +\infty} p^{(n)} = p({\bf U})$ because $0\le \delta < 1$.
\end{algo}

\begin{algo}[Hybrid iteration algorithm]
	As we can see from the above analysis, 
	the bisection algorithm and the fixed-point iteration algorithm 
	have  different contraction rates, specifically, the  rate is  
	$\frac12$ for the bisection algorithm, and the estimated rate for the fixed-point iteration algorithm is $\delta = (\Gamma -1) \frac{\|{\bm m}\|^2}{E^2}$. 
	In order to further accelerate the convergence, 
	we devise a new hybrid algorithm, which enjoys the smaller contraction rate by switching the above two algorithms. Specifically, when ${\bf U}$ satisfies $\delta\ge \frac12$ 
	we use the bisection algorithm; otherwise, the fixed-point iteration algorithm is employed instead. 
	Clearly, such a hybrid algorithm is also convergence-guaranteed. 
	Our numerical experiments discussed in Remark \ref{Rem:Compare} will show that this hybrid iteration algorithm 
	is very efficient and faster than the other two algorithms.  
\end{algo}

\begin{algorithm}[htbp]
	\caption{Efficient implementation of hybrid iteration algorithm for recovering primitive variables}\label{alg:fix}
	\begin{algorithmic}
		\Require ${\bf U}=(D,{\bm m},E) \in G_u$ satisfying $D>0$ and $E> \sqrt{ D^2 + \| {\bm m} \|^2 } $.
		\Ensure $(\rho,{\bm v},p) = {\bm {\mathcal W}} ({\bf U}) \in G_w$ satisfying $\rho>0$, $p>0$, and $\| {\bm v}\|<1$. 
		\State Define $\varepsilon_{rf}$ as the round-off error. Set the allowable error tolerance $\varepsilon_{tol} = 10^{-15}$
		\State Set $r = \min \Big\{ \frac12, (\Gamma -1) \frac{\|{\bm m}\|^2}{E^2} \Big\}$
		\State $p_L \gets 0$.  
		\State $p_R \gets (\Gamma -1) \Big( E 	- D \sqrt{ 1- \frac{ \| {\bm m} \|^2 }{ E^2 } } \Big) $
		\State $p \gets \frac{p_R}{2}$
		\State Set $n=0$. Set $N  =  \log(  \varepsilon_{rf} / p )/\log(r)$
		\State $\Phi_0 \gets \varepsilon_{tol} + 1$
		\While{$\Phi_0 > \varepsilon_{tol}$ and $n < N$}
		\State $\Phi_{ \bf U} \gets  \frac{p }{\Gamma - 1} - E + 
		\frac{ \| {\bm m} \|^2 }{E+p} 
		+ D \sqrt{  1 - \frac{ \| {\bm m} \|^2 }{ (E+p )^2 }  }$
		\If { $r<\frac12$ }
				\State $p\gets-(\Gamma-1)  \Phi_{ \bf U} + p$
		\Else 
		\If{$\Phi_{ \bf U} <0$ }
		\State $p_L \gets p$
		\Else
		\State $p_R \gets p$
		\EndIf 
		\State $p \gets \frac{p_L + p_R}{2}$
		\EndIf
		\State $\Phi_0 \gets |\Phi_{ \bf U} |$
		\State $n \gets n+1$
		\EndWhile
		\State ${\bm v} \gets \frac{\bm m}{E+p}$
		\State $\rho \gets D \sqrt{1- \| {\bm v} \|^2}$
	\end{algorithmic}
\end{algorithm}

\begin{remark}
	Both the bisection algorithm and the fixed-point iteration algorithm converge linearly,  and so does our hybrid algorithm. 
	Another popular root-finding algorithm is Newton's algorithm, which often converges quadratically. 
	However, the convergence of Newton's algorithm requires the initial guess $p^{(0)}$ to be 
	sufficiently close to the true root $p({\bf U})$, which is difficult to guarantee in practice. 
	Moreover, in the present problem, we observe that the approximate pressure may become negative during the Newton's iteration (even if ${\bf U} \in G_u$ is admissible), causing the failure of the iteration. 
	When such failure or divergence occurs, we have to restart the Newton's iteration by trying a different initial guess, until it successfully converges to a positive pressure at the desired accuracy. 
	Therefore, Newton's algorithm is not convergence-guaranteed. 
	We will compare our three algorithms with the Newton's algorithm by numerical experiments; see Remark \ref{Rem:Compare}.
\end{remark}

	\subsection{Rigorous proof of physical-constraint-preserving property}\label{sec:CPP}
Now we are in the position to provide a rigorous proof of the PCP property \eqref{eq:constraint-preservingproperty} of our numerical method. Several lemmas are first derived, which pave the way to our proof.

A novel equivalent form of the set $G_u$ is {first} given in \eqref{key333}. 
Compared with the original form in \eqref{Gu} and the equivalent form in \eqref{Gu1}, the following equivalent form $G_u^{(2)}$ has a distinctive feature---all the constraints in $G_u^{(2)}$ are not only explicit but also linear with respect to $\bf U$. Benefit from this feature, the proof of the PCP property becomes more convenient; see Theorem \ref{THCPP}.

\begin{lemma}\label{lem:Gu2}
	The admissible state set $G_u$ is exactly equivalent to the following set
\begin{equation}\label{key333}
G_u^{(2)} := 
\left\{ 
{\bf U}=(D, {\bm m}, E)^\top \in \mathbb R^4:~D>0,~E-{\bm m} \cdot {\bm v}_* - D \sqrt{ 1 - \| {\bm v}_* \|^2 }>0,~\forall {\bm v}_* \in \mathbb B_1( {\bf 0} )
\right\},
\end{equation}	
where $\mathbb B_1( {\bf 0} ):=\{ {\bm x}\in \mathbb R^2: \|{\bm x}\|<1 \}$ denotes the open unit ball centered at $\bf 0$ in $\mathbb R^2$. 
\end{lemma}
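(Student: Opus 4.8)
The plan is to prove the claimed equivalence by establishing $G_u^{(2)} = G_u^{(1)}$, since Lemma~\ref{lem:Gu1} already gives $G_u = G_u^{(1)}$ with the concave function $g({\bf U}) = E - \sqrt{D^2 + \|{\bm m}\|^2}$. Both sets impose $D>0$, so I fix an arbitrary ${\bf U} = (D, {\bm m}, E)^\top$ with $D>0$ and analyze only the remaining constraint. The heart of the argument is to determine the extremal value over ${\bm v}_* \in \mathbb B_1({\bf 0})$ of the quantity $\varphi({\bm v}_*) := {\bm m}\cdot {\bm v}_* + D\sqrt{1 - \|{\bm v}_*\|^2}$, because the defining inequality of $G_u^{(2)}$ reads exactly $E > \varphi({\bm v}_*)$ for every admissible ${\bm v}_*$.

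First I would exploit a geometric reformulation: for ${\bm v}_* \in \mathbb B_1({\bf 0}) \subset \mathbb R^2$, the augmented vector $({\bm v}_*, \sqrt{1 - \|{\bm v}_*\|^2}) \in \mathbb R^3$ is a unit vector in the open upper hemisphere, and conversely every such hemisphere point arises this way. Writing $\varphi({\bm v}_*) = ({\bm m}, D) \cdot ({\bm v}_*, \sqrt{1-\|{\bm v}_*\|^2})$, the Cauchy--Schwarz inequality immediately yields $\varphi({\bm v}_*) \le \|({\bm m}, D)\| = \sqrt{D^2 + \|{\bm m}\|^2}$, with equality precisely when the two $\mathbb R^3$-vectors are positively parallel.

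Next I would verify that this bound is attained inside the open ball: since $D>0$, the normalized direction $({\bm m}, D)/\sqrt{D^2 + \|{\bm m}\|^2}$ has strictly positive last coordinate and hence lies in the open upper hemisphere, so the maximizer is ${\bm v}_*^\star = {\bm m}/\sqrt{D^2 + \|{\bm m}\|^2}$, which satisfies $\|{\bm v}_*^\star\| < 1$. Consequently $\max_{{\bm v}_* \in \mathbb B_1} \varphi({\bm v}_*) = \sqrt{D^2 + \|{\bm m}\|^2}$, so that $\inf_{{\bm v}_* \in \mathbb B_1}\big(E - \varphi({\bm v}_*)\big) = E - \sqrt{D^2 + \|{\bm m}\|^2} = g({\bf U})$, with the infimum attained at ${\bm v}_*^\star$. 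Both inclusions then follow: if ${\bf U} \in G_u^{(1)}$ then $g({\bf U})>0$ gives $E - \varphi({\bm v}_*) \ge g({\bf U}) > 0$ for all ${\bm v}_*$, so ${\bf U} \in G_u^{(2)}$; conversely, if ${\bf U} \in G_u^{(2)}$ then evaluating the strict inequality at the interior maximizer ${\bm v}_*^\star$ gives $g({\bf U}) = E - \varphi({\bm v}_*^\star) > 0$, so ${\bf U} \in G_u^{(1)}$.

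The step I expect to be most delicate is precisely this attainment claim: the equivalence would break if the supremum of $\varphi$ were only approached along the boundary $\|{\bm v}_*\| \to 1$ rather than achieved in the open ball, because then a pointwise-strict inequality would not force $g({\bf U})>0$. The hypothesis $D>0$ is exactly what places the maximizing direction strictly in the interior, and I would emphasize this. As an equivalent route worth recording, one may instead note that $-D\sqrt{1-\|{\bm v}_*\|^2}$ is strictly convex on $\mathbb B_1$ when $D>0$, so $E - \varphi$ has a unique interior critical point obtained from $\nabla \varphi = {\bm m} - D {\bm v}_*/\sqrt{1-\|{\bm v}_*\|^2} = {\bf 0}$; solving this recovers the same ${\bm v}_*^\star$ and the same minimum value $g({\bf U})$.
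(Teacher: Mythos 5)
Your proof is correct and follows essentially the same route as the paper's: both directions hinge on the Cauchy--Schwarz bound ${\bm m}\cdot {\bm v}_* + D\sqrt{1-\|{\bm v}_*\|^2} \le \sqrt{D^2+\|{\bm m}\|^2}$ together with the observation that, because $D>0$, equality is attained at the interior point ${\bm v}_*^\star = {\bm m}/\sqrt{D^2+\|{\bm m}\|^2}$, which is exactly the special choice the paper substitutes. Your explicit remark that the interior attainment is what makes the pointwise-strict inequality equivalent to $g({\bf U})>0$ is a nice clarification but does not change the argument.
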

\begin{proof}
	To prove the equivalence of these two sets $G_u^{(2)}$ and $G_u^{(1)}$, it is enough to prove that $G_u^{(2)} \subset G_u^{(1)}$ and $G_u^{(1)}\subset G_u^{(2)}$ establised simultaneously.
	
	First, prove $\vec{U}\in G_u^{(2)}  \Rightarrow \vec{U}\in G_u^{(1)}$. Let ${\bf U}=(D, {\bm m}, E)^\top \in G_u^{(2)}$, then we have $D>0$ and $E-{\bm m} \cdot {\bm v}_* - D \sqrt{ 1 - \| {\bm v}_* \|^2 }>0,~\forall {\bm v}_* \in \mathbb B_1( {\bf 0} )$. If we take special $ {\bm v}_* = \frac{\bm m}{\sqrt{D^2+\| \bm m\|^2}}$ satisfying ${\bm v}_* \in \mathbb B_1( {\bf 0} )$, then  it is easy to obtain 
	\[
	  0	< E-{\bm m} \cdot {\bm v}_* - D \sqrt{ 1 - \| {\bm v}_* \|^2 } = E-\sqrt{ D^2+\| \bm m\|^2}.
	\]
	which implies the second constraint of $G_u^{(1)}$, along with $D>0$, yields ${\bf U} \in G_u^{(1)}$.
	
	Then, prove $\vec{U}\in G_u^{(1)}  \Rightarrow \vec{U}\in G_u^{(2)}$. Let ${\bf U}=(D, {\bm m}, E)^\top \in G_u^{(1)}$, then by definition we have $D>0$ and $E-\sqrt{ D^2+\| \bm m\|^2}>0$. By using Cauchy--Schwarz inequality, we deduce that 
	$$
	   \sqrt{ D^2+\| \bm m\|^2} - {\bm m} \cdot {\bm v}_* - D \sqrt{ 1 - \| {\bm v}_* \|^2 }\geq \sqrt{ D^2+\| \bm m\|^2} - \sqrt{ D^2+\| \bm m\|^2} \cdot \sqrt{ \| {\bm v}_* \|^2 + 1 - \| {\bm v}_* \|^2} = 0,
	$$
   which gives
	\begin{align*}
		  E-{\bm m} \cdot {\bm v}_* - D \sqrt{ 1 - \| {\bm v}_* \|^2 } &= E- \sqrt{ D^2+\| \bm m\|^2} + \sqrt{ D^2+\| \bm m\|^2} - {\bm m} \cdot {\bm v}_* - D \sqrt{ 1 - \| {\bm v}_* \|^2 }\\
		  &\geq E- \sqrt{ D^2+\| \bm m\|^2} >0.
	\end{align*}
    This along with $D>0$ implies $\vec{U}\in G_u^{(2)}$. The proof is completed.
\end{proof}

\begin{lemma}\label{lem:IEQ3}
For any ${\bf U}\in G_u$ and any unit vector ${\bf n}\in \mathbb R^2$, 
the following inequalities hold
\begin{align}
&	0<c_s < \sqrt{ \Gamma -1 }, \\
&   v_{\bf n} < \lambda_{\bf n}^{(4)} < 1,
\\ \label{eq:IEQ3}
& \frac{ v_{\bf n} }{ \lambda_{\bf n}^{(4)} - v_{\bf n} } < 
-1+ \gamma^2 \left( 1 + \frac{1}{c_s} \right).
\end{align}
\end{lemma}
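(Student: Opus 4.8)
The plan is to handle the three inequalities in order, using the explicit eigenstructure recorded in Appendix~\ref{eigenstructure}: in the normal direction $\bf n$ the largest characteristic speed is
\begin{equation*}
\lambda_{\bf n}^{(4)} = \frac{ v_{\bf n}(1-c_s^2) + c_s\sqrt{(1-\|{\bm v}\|^2)\big[\,1-\|{\bm v}\|^2 c_s^2 - v_{\bf n}^2(1-c_s^2)\,\big]} }{ 1-\|{\bm v}\|^2 c_s^2 },
\end{equation*}
with $v_{\bf n}={\bm v}\cdot{\bf n}$, $\gamma^2=(1-\|{\bm v}\|^2)^{-1}$, $v_{\bf n}^2\le\|{\bm v}\|^2<1$, and sound speed $c_s^2=\Gamma p/(\rho H)$. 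First I would dispose of $0<c_s<\sqrt{\Gamma-1}$: since ${\bf U}\in G_u$ forces $\rho,p>0$, positivity of $c_s$ is immediate, while the ideal equation of state \eqref{EOS} gives $\rho H=\rho+\Gamma p/(\Gamma-1)$, so $c_s^2=\Gamma p(\Gamma-1)/\big(\rho(\Gamma-1)+\Gamma p\big)<\Gamma-1$ reduces after cross-multiplication to $0<\rho(\Gamma-1)$, which holds. Because $\Gamma\le2$ this also yields $0<c_s<1$, a fact used throughout.

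For $v_{\bf n}<\lambda_{\bf n}^{(4)}<1$ I would work directly from the formula. Writing $S:=\sqrt{(1-\|{\bm v}\|^2)\big[1-\|{\bm v}\|^2 c_s^2 - v_{\bf n}^2(1-c_s^2)\big]}$, a short manipulation gives
\begin{equation*}
\lambda_{\bf n}^{(4)}-v_{\bf n} = \frac{ c_s\big[\,S - v_{\bf n}c_s(1-\|{\bm v}\|^2)\,\big] }{ 1-\|{\bm v}\|^2 c_s^2 }.
\end{equation*}
The radicand is strictly positive since $1-\|{\bm v}\|^2 c_s^2 - v_{\bf n}^2(1-c_s^2)>1-\|{\bm v}\|^2>0$ (using $v_{\bf n}^2\le\|{\bm v}\|^2$ and $c_s^2<1$), so $S>0$. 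If $v_{\bf n}\le0$ the bracket is clearly positive; if $v_{\bf n}>0$, squaring $S>v_{\bf n}c_s(1-\|{\bm v}\|^2)$ reduces to $v_{\bf n}^2(1-c_s^2\|{\bm v}\|^2)<1-c_s^2\|{\bm v}\|^2$, true because $v_{\bf n}^2<1$. The bound $\lambda_{\bf n}^{(4)}<1$ follows by the same squaring technique applied to $c_sS<1-\|{\bm v}\|^2 c_s^2-v_{\bf n}(1-c_s^2)$, whose right-hand side I would first check is positive via $1-\|{\bm v}\|^2 c_s^2>1-c_s^2>v_{\bf n}(1-c_s^2)$.

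The heart of the matter is the third inequality, and here I expect the real effort. The crucial opening move is that the right-hand side collapses: $-1+\gamma^2(1+1/c_s)$ equals exactly $(1+c_s\|{\bm v}\|^2)/\big(c_s(1-\|{\bm v}\|^2)\big)$. Using $\lambda_{\bf n}^{(4)}-v_{\bf n}>0$ from the previous step together with the displayed expression for it, I would cross-multiply (all denominators being positive) to reduce the claim to
\begin{equation*}
v_{\bf n}(1-\|{\bm v}\|^2)\big[(1-\|{\bm v}\|^2 c_s^2)+c_s(1+c_s\|{\bm v}\|^2)\big] < (1+c_s\|{\bm v}\|^2)\,S,
\end{equation*}
where the bracket simplifies to $1+c_s$, leaving $v_{\bf n}(1-\|{\bm v}\|^2)(1+c_s)<(1+c_s\|{\bm v}\|^2)S$. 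The case $v_{\bf n}\le0$ is immediate since the left side is nonpositive and $S>0$; for $v_{\bf n}>0$ I would square, divide by $1-\|{\bm v}\|^2$, and collect the $v_{\bf n}^2$ terms to arrive at $v_{\bf n}^2A<B$ with $A=(1-\|{\bm v}\|^2)(1+c_s)^2+(1+c_s\|{\bm v}\|^2)^2(1-c_s^2)$ and $B=(1+c_s\|{\bm v}\|^2)^2(1-\|{\bm v}\|^2 c_s^2)$.

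The final and cleanest step uses $v_{\bf n}^2\le\|{\bm v}\|^2$ together with the identity
\begin{equation*}
B-\|{\bm v}\|^2 A = (1-\|{\bm v}\|^2)^2\,(1-c_s^2\|{\bm v}\|^2) > 0,
\end{equation*}
which I would verify by factoring $(1+c_s\|{\bm v}\|^2)^2-\|{\bm v}\|^2(1+c_s)^2=(1-\|{\bm v}\|^2)(1-c_s^2\|{\bm v}\|^2)$; then $v_{\bf n}^2A\le\|{\bm v}\|^2A<B$ closes the argument. The only genuine obstacle is bookkeeping: keeping the radicand and every cross-multiplied factor positive so that each squaring is reversible, and relying on the two algebraic collapses (the bracket to $1+c_s$ and $B-\|{\bm v}\|^2A$ to a perfect product) that turn an otherwise unwieldy rational inequality into an elementary one.
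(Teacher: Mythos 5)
Your proposal is correct, and for the crucial third inequality it takes a genuinely different route from the paper. The first bound $0<c_s<\sqrt{\Gamma-1}$ is handled identically (drop the $\rho$ term from $\rho H=\rho+\frac{\Gamma}{\Gamma-1}p$ in the denominator of $c_s^2$). For the second bound, the paper only proves the lower half explicitly, estimating the radicand from below by $\gamma^{-2}$ to get $\lambda_{\bf n}^{(4)}-v_{\bf n}\ge \frac{c_s\gamma^{-2}(1-v_{\bf n}c_s)}{1-\|{\bm v}\|^2c_s^2}>0$ in one line, and leaves $\lambda_{\bf n}^{(4)}<1$ implicit; your squaring argument supplies a complete proof of both halves, which is a small bonus since $1-\lambda^2>0$ is actually used later in Lemma~\ref{leam:keyIEQ}. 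For \eqref{eq:IEQ3} the paper rationalizes the reciprocal,
\begin{equation*}
\frac{1}{\lambda_{\bf n}^{(4)}-v_{\bf n}}=\frac{v_{\bf n}}{1-v_{\bf n}^2}+\frac{\sqrt{1-v_{\bf n}^2-(\|{\bm v}\|^2-v_{\bf n}^2)c_s^2}}{c_s\gamma^{-1}(1-v_{\bf n}^2)},
\end{equation*}
bounds the radicand by $1-v_{\bf n}^2$, and then uses monotonicity in $|v_{\bf n}|\le\|{\bm v}\|$ to reach $-1+\gamma^2\left(1+\frac{\|{\bm v}\|}{c_s}\right)<-1+\gamma^2\left(1+\frac1{c_s}\right)$; this is shorter and makes transparent where the constant on the right-hand side comes from. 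You instead rewrite that constant as $\frac{1+c_s\|{\bm v}\|^2}{c_s(1-\|{\bm v}\|^2)}$, clear all (positive) denominators, and reduce the claim to the polynomial inequality $v_{\bf n}^2A<B$, closed by the factorization $B-\|{\bm v}\|^2A=(1-\|{\bm v}\|^2)^2(1-c_s^2\|{\bm v}\|^2)$; I checked the two algebraic collapses you rely on and they are correct, and every squaring step is applied to quantities you have verified to be positive, so the argument is reversible. The only blemish is cosmetic: the comparison $1-\|{\bm v}\|^2c_s^2-v_{\bf n}^2(1-c_s^2)\ge 1-\|{\bm v}\|^2$ holds with equality when $v_{\bf n}^2=\|{\bm v}\|^2$, so the strict inequality you wrote there should be non-strict, though $S>0$ follows regardless. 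In summary, your proof is more mechanical but fully self-contained, while the paper's is slicker and more revealing of the structure of the bound; both are valid.
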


\begin{proof}
Direct calculation gives 
$$
0< c_s = \sqrt{ \frac{ \Gamma p }{ \rho + \frac{\Gamma}{\Gamma-1}p } } \le \sqrt{ \frac{ \Gamma p }{  \frac{\Gamma}{\Gamma-1}p } } = \sqrt{ \Gamma -1 },
$$
and 
$$
\lambda_{\bf n}^{(4)} - v_{\bf n} =  \frac{ -v_{\bf n} c_s^2 \gamma^{-2}+c_s\gamma^{-1}\sqrt{1-v^2_{\bf n}-(\|{\bm v}\|^2-v^2_{\bf n} )c^2_s}}{1-\|{\bm v}\|^2c^2_s} \ge \frac{c_s\gamma^{-2} (1-v_{\bf n} c_s) }{1-\|{\bm v}\|^2c^2_s} > 0.
$$
It follows that 
\begin{align*}
	\frac{ v_{\bf n} }{ \lambda_{\bf n}^{(4)} - v_{\bf n} } & = v_{\bf n} 
	\left(
	\frac{ v_{\bf n} }{ 1 - v_{\bf n}^2 } + \frac{ \sqrt{1-v^2_{\bf n}-(\|{\bm v}\|^2-v^2_{\bf n} )c^2_s} } { c_s \gamma^{-1} ( 1 - v_{\bf n}^2 ) }
	\right)
	\\
	& \le \frac{ v_{\bf n}^2 }{ 1 - v_{\bf n}^2 } + \frac{  |v_{\bf n}| } { c_s \gamma^{-1} \sqrt{1-v^2_{\bf n}} }
	\\
	& \le \frac{ \| {\bm v} \|^2 }{ 1 - \| {\bm v} \|^2 } + \frac{  \| {\bm v} \| } { c_s \gamma^{-1} \sqrt{1- \| {\bm v} \|^2  } }
	\\
	& = -1 + \gamma^2 \left( 1+ \frac { \| {\bm v} \| }{c_s} \right)  < -1 + \gamma^2 \left( 1+ \frac { 1 }{c_s} \right). 
\end{align*}
The proof is completed. 
\end{proof}

\begin{lemma}\label{leam:keyIEQ}
For any ${\bf U}\in G_u$, any unit vector ${\bf n}\in \mathbb R^2$, any ${\bm v}_* \in \mathbb B_1({\bf 0})$, and any $\lambda \ge \lambda_{\bf n}^{(4)} $, we have 
\begin{equation}\label{keyIEQ}
	- \left( {\bf n} \cdot {\bf F} ( {\bf U} ) \right) \cdot {\bm \xi}_* > - \lambda {\bf U}  \cdot {\bm \xi}_*,
\end{equation}
where ${\bm \xi}_* := \left( -\sqrt{ 1-\| {\bm v}_* \|^2 }, -{\bm v}_*, 1 \right)^\top$.
\end{lemma}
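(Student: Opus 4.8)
The plan is to observe that the two sides of \eqref{keyIEQ} assemble into a single projection. Indeed, $-\left({\bf n}\cdot{\bf F}({\bf U})\right)\cdot{\bm \xi}_* + \lambda{\bf U}\cdot{\bm \xi}_* = \big(\lambda{\bf U}-{\bf n}\cdot{\bf F}({\bf U})\big)\cdot{\bm \xi}_*$, so the claim is that one fixed vector ${\bf U}':=\lambda{\bf U}-{\bf n}\cdot{\bf F}({\bf U})$ has positive inner product with ${\bm \xi}_*$. Using ${\bf n}\cdot{\bf F}({\bf U})=(Dv_{\bf n},\,{\bm m}v_{\bf n}+p{\bf n},\,m_{\bf n})^\top$ with $v_{\bf n}={\bm v}\cdot{\bf n}$ and $m_{\bf n}={\bm m}\cdot{\bf n}$, I would first compute the components $D'=(\lambda-v_{\bf n})D$, ${\bm m}'=(\lambda-v_{\bf n}){\bm m}-p{\bf n}$, and $E'=\lambda E-m_{\bf n}$. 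The point is that, since ${\bm \xi}_*=(-\sqrt{1-\|{\bm v}_*\|^2},-{\bm v}_*,1)^\top$, we have ${\bf U}'\cdot{\bm \xi}_*=E'-{\bm m}'\cdot{\bm v}_*-D'\sqrt{1-\|{\bm v}_*\|^2}$, which is exactly the linear functional of ${\bm v}_*$ appearing in the set $G_u^{(2)}$ of Lemma~\ref{lem:Gu2}.

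This identification is what makes the quasilinear viewpoint pay off. The inequality must hold for \emph{every} ${\bm v}_*\in\mathbb B_1({\bf 0})$, and the $D'>0$ part is immediate because $v_{\bf n}<\lambda_{\bf n}^{(4)}\le\lambda$ by Lemma~\ref{lem:IEQ3}. Hence, by the equivalence $G_u^{(2)}=G_u=G_u^{(1)}$ established in Lemmas~\ref{lem:Gu1} and \ref{lem:Gu2}, the entire family of inequalities in ${\bm v}_*$ collapses to the single scalar condition ${\bf U}'\in G_u$, i.e. $g({\bf U}')=E'-\sqrt{D'^2+\|{\bm m}'\|^2}>0$. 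Thus I would reduce the lemma to proving $\lambda{\bf U}-{\bf n}\cdot{\bf F}({\bf U})\in G_u$. Moreover, since $G_u$ is a convex cone (its convexity together with Lemma~\ref{lem:lamG}) and ${\bf U}'_\lambda={\bf U}'_{\lambda_{\bf n}^{(4)}}+(\lambda-\lambda_{\bf n}^{(4)}){\bf U}$ with ${\bf U}\in G_u$ and $\lambda-\lambda_{\bf n}^{(4)}\ge0$, it suffices to treat the extremal value $\lambda=\lambda_{\bf n}^{(4)}$.

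For that case I would substitute the primitive variables $D=\rho\gamma$, ${\bm m}=\rho H\gamma^2{\bm v}$, $E+p=\rho H\gamma^2$, and use $1-\|{\bm v}\|^2=\gamma^{-2}$ to simplify $E'^2-D'^2-\|{\bm m}'\|^2$. The cross terms cancel cleanly and the expression collapses to $(\lambda_{\bf n}^{(4)}-v_{\bf n})^2\gamma^2\kappa-p^2\big(1-(\lambda_{\bf n}^{(4)})^2\big)$ with $\kappa:=(\rho H-p)^2-\rho^2-p^2$; here $\kappa>0$ follows from the constraint $\Gamma\le2$, and $1-(\lambda_{\bf n}^{(4)})^2>0$ from Lemma~\ref{lem:IEQ3}. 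Writing $p=c_s^2\rho H/\Gamma$ recasts this purely in terms of $c_s,\gamma,v_{\bf n},\lambda_{\bf n}^{(4)}$, and its positivity is then controlled exactly by the sound-speed bound $0<c_s<\sqrt{\Gamma-1}$ together with the key estimate \eqref{eq:IEQ3}, rearranged as $\lambda_{\bf n}^{(4)}<(\lambda_{\bf n}^{(4)}-v_{\bf n})\gamma^2(1+c_s^{-1})$, which furnishes the needed lower bound on $\lambda_{\bf n}^{(4)}-v_{\bf n}$. A short side check gives $E'>0$ (trivial when $v_{\bf n}\le0$, and via the same estimate otherwise), so that $E'>\sqrt{D'^2+\|{\bm m}'\|^2}$ and ${\bf U}'\in G_u^{(1)}=G_u$; Lemma~\ref{lem:Gu2} then yields ${\bf U}'\cdot{\bm \xi}_*>0$ for all ${\bm v}_*$, which is the claim. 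I expect the main obstacle to be precisely this final algebraic step: matching the collapsed quadratic form against \eqref{eq:IEQ3}, since that is where the delicate coupling among $\lambda_{\bf n}^{(4)}$, $c_s$, $\gamma$, and $v_{\bf n}$ must be resolved—everything upstream is essentially bookkeeping once the assembly into ${\bf U}'$ and the $G_u^{(2)}$ reformulation are in place.
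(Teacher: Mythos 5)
Your overall route is essentially the paper's, but reorganized in a way that is arguably cleaner. The paper expands $\left(\lambda{\bf U}-{\bf n}\cdot{\bf F}({\bf U})\right)\cdot{\bm \xi}_*$ in primitive variables and applies the Cauchy--Schwarz inequality inline to obtain the lower bound $J_2-J_1$; you package that same Cauchy--Schwarz step into the already-proved equivalence $G_u=G_u^{(1)}=G_u^{(2)}$ of Lemmas~\ref{lem:Gu1} and \ref{lem:Gu2}, reducing the whole family of inequalities over ${\bm v}_*\in\mathbb B_1({\bf 0})$ to the single membership statement $\lambda{\bf U}-{\bf n}\cdot{\bf F}({\bf U})\in G_u^{(1)}$. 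Your identification is exact: with $E'=J_2$ and $\sqrt{D'^2+\|{\bm m}'\|^2}=J_1$, the paper's $J_2^2-J_1^2$ is precisely your $\gamma^2(\lambda-v_{\bf n})^2\kappa-p^2(1-\lambda^2)$ with $\kappa=(\rho H-p)^2-\rho^2-p^2=(\rho H)^2-2\rho Hp-\rho^2>0$ for $\Gamma\le 2$. Your cone argument for passing from $\lambda=\lambda_{\bf n}^{(4)}$ to general $\lambda\ge\lambda_{\bf n}^{(4)}$ is a valid substitute for the paper's direct use of ${\bf U}\cdot{\bm \xi}_*>0$, and your $E'>0$ side check via \eqref{eq:IEQ3} matches the paper's proof that $J_2>0$.

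The one place your plan would not close as written is the final positivity of $\gamma^2(\lambda_{\bf n}^{(4)}-v_{\bf n})^2\kappa-p^2\bigl(1-(\lambda_{\bf n}^{(4)})^2\bigr)$. You propose to control it by the sound-speed bound together with \eqref{eq:IEQ3} rearranged into a lower bound on $\lambda_{\bf n}^{(4)}-v_{\bf n}$. But \eqref{eq:IEQ3} is an upper bound on $v_{\bf n}/(\lambda_{\bf n}^{(4)}-v_{\bf n})$, which yields a useful lower bound on $\lambda_{\bf n}^{(4)}-v_{\bf n}$ only when $v_{\bf n}>0$ and in any case does not relate $\gamma^2(\lambda-v_{\bf n})^2$ to $1-\lambda^2$ in the required way. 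The missing ingredient is the exact dispersion identity satisfied by the extreme eigenvalue, $\gamma^2(\lambda_{\bf n}^{(4)}-v_{\bf n})^2=\bigl(1-(\lambda_{\bf n}^{(4)})^2\bigr)c_s^2/(1-c_s^2)$ (equation \eqref{key6681} in the paper's proof), which eliminates $\lambda_{\bf n}^{(4)}$ entirely and converts the inequality into $c_s^2\bigl((\rho H-p)^2-\rho^2\bigr)>p^2(1-c_s^2)$; this is then settled by $c_s^2<\Gamma-1$ and $\Gamma\le 2$ alone, with \eqref{eq:IEQ3} playing no role in this step. With that identity inserted, your argument is complete and coincides with the paper's.
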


\begin{proof}
First we consider the case $\lambda = \lambda_{\bf n}^{(4)} $, which satisfies 
\begin{equation}\label{key6681}
	\gamma^2 ( \lambda - v_{\bf n} )^2 = \frac{ (1 - \lambda^2) c_s^2 }{ 1 - c_s^2 }, 
\end{equation}
and we deduce that 
\begin{align*}
	 \left( \lambda {\bf U} - {\bf n} \cdot {\bf F} ( {\bf U} ) \right)  \cdot {\bm \xi}_* 
	 & = -( \lambda - v_{\bf n} ) \rho \gamma \sqrt{ 1 - \| {\bm v}_* \|^2 } 
	 - \left(
	 (\lambda - v_{\bf n}) \rho H \gamma^2 v_{\bf n} - p
	 \right)  {\bm v}_* \cdot {\bf n}
	 \\
	 & \quad 
	 - (\lambda - v_{\bf n}  ) \rho H \gamma^2 v_\tau {\bm v}_* \cdot ( -n_2, n_1 )
	 + 
	 (\lambda - v_{\bf n})(\rho H \gamma^2 - p) - p v_{\bf n}
	 \\
	 & \ge - J_1 \times \sqrt{ 1 - \| {\bm v}_* \|^2 + ({\bm v}_* \cdot {\bf n})^2 + ( {\bm v}_* \cdot ( -n_2, n_1 ) )^2  } + J_2
	 = J_2 - J_1,
\end{align*}
where the Cauchy–Schwarz inequality has been used, with 
\begin{align*}
J_1 &=	\sqrt{ ( \lambda - v_{\bf n} )^2 \rho^2 \gamma^2
	+ 	\left(
	(\lambda - v_{\bf n}) \rho H \gamma^2 v_{\bf n} - p
	\right)^2 +   (\lambda - v_{\bf n}  )^2 \rho^2 H^2 \gamma^4 v_\tau^2 
},
\\
J_2 & = (\lambda - v_{\bf n})(\rho H \gamma^2 - p) - p v_{\bf n} = p (\lambda - v_{\bf n}) 
\left( \frac{ \Gamma \gamma^2 } {c_s^2} - 1 - \frac{ v_{\bf n} }{ \lambda - v_{\bf n} } \right) 
\\
& >  p (\lambda - v_{\bf n})  \gamma^2 c_s^{-2} \left( 
\Gamma - c_s (c_s +1) 
\right) > 0,
\end{align*}
in which we have used the three inequalities from Lemma \ref{lem:IEQ3}. Using \eqref{key6681}, we further derive that 
\begin{align*}
\left( \lambda {\bf U} - {\bf n} \cdot {\bf F} ( {\bf U} ) \right)  \cdot {\bm \xi}_* 
		& \ge J_2 - J_1 = \frac{1}{J_1 + J_2} \left[
		\left(
		(\rho H)^2 - 2 \rho H p - \rho^2
		\right) \gamma^2 ( \lambda - v_{\bf n} )^2 - p^2 ( 1 - \lambda^2 ) 
		\right] 
		\\
		& = \frac{1}{J_1 + J_2} \left[
		\left(
		(\rho H)^2 - 2 \rho H p - \rho^2
		\right) \frac{ (1 - \lambda^2) c_s^2 }{ 1 - c_s^2 } - p^2 ( 1 - \lambda^2 ) 
		\right] 
		\\
		& = \frac{p^2( 1 - \lambda^2 ) }{ (1-c_s^2)(J_1+J_2) } \left\{
				\left[
		\left( \frac{\Gamma}{c_s^2}  \right)^2 - \frac{2 \Gamma} { c_s^2 }
		- \left( \frac{\Gamma}{\Gamma - 1} - \Gamma c_s^{-2}
		\right)^2
		\right] c_s^2 - 1 + c_s^2 
		\right\}
		\\
		& = \frac{p^2( 1 - \lambda^2 ) }{ (1-c_s^2)(J_1+J_2) } 
		\left(
		\frac{\Gamma + 1}{\Gamma -1} + \frac{ 1 - 2\Gamma }{ (\Gamma -1)^2 } c_s^2
		\right)
		\\
		&> \frac{p^2( 1 - \lambda^2 ) }{ (1-c_s^2)(J_1+J_2) } 
		\left(
		\frac{2 -\Gamma }{\Gamma -1} 
		\right) \ge 0,
\end{align*}
which completes the proof of \eqref{keyIEQ} for $\lambda = \lambda_{\bf n}^{(4)} $. 
Because ${\bf U}\in G_u = G_u^{(2)}$, we have ${\bf U}  \cdot {\bm \xi}_*>0$. 
Therefore, when $\lambda \ge \lambda_{\bf n}^{(4)} $, it holds 
$$
- \left( {\bf n} \cdot {\bf F} ( {\bf U} ) \right) \cdot {\bm \xi}_* > - \lambda_{\bf n}^{(4)} {\bf U}  \cdot {\bm \xi}_* \ge  - \lambda {\bf U}  \cdot {\bm \xi}_*. 
$$
The proof is completed. 
\end{proof} 

\begin{lemma}\label{lem:HLL}
For any ${\bf U}^- \in G_u$, any ${\bf U}^+ \in G_u$, any unit vector ${\bf n}\in \mathbb R^2$, and any ${\bm v}_* \in \mathbb B_1({\bf 0})$, we have
\begin{align}
&	- \widehat{\bf F}^{hll} ( {\bf U}^-, {\bf U}^+; {\bf n} ) \cdot {\bm \xi}_* 
	\ge - \max_{ {\bf U} \in \{ {\bf U}^-,{\bf U}^+ \}  } \widehat \sigma ( {\bf U}; {\bf n} ) {\bf U}^- \cdot {\bm \xi}_*,
	\\
&  - \widehat{D}^{hll} ( {\bf U}^-, {\bf U}^+; {\bf n} ) \ge - \max_{ {\bf U} \in \{ {\bf U}^-,{\bf U}^+ \}  } \widehat \sigma ( {\bf U}; {\bf n} ) D^-
\end{align}
where $\widehat{D}^{hll}$ denotes the first component of $\widehat{\bf F}^{hll} ( {\bf U}^-, {\bf U}^+; {\bf n} )$, and 
$\widehat \sigma ( {\bf U}; {\bf n} )$ is the spectral radius of the Jacobian matrix ${\bf A}_{\bf n} ( {\bf U} )$ and is defined by 
$$
\widehat \sigma ( {\bf U}; {\bf n} ) : = \frac{|v_{\bf n}|(1-c_s^2)+c_s\gamma^{-1}\sqrt{1-v^2_{\bf n}-(\|{\bm v}\|^2-v^2_{\bf n} )c^2_s}}{1-\|{\bm v}\|^2c^2_s}.
$$
\end{lemma}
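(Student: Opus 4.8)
The plan is to exploit the convex-combination structure of the HLL flux together with the key inequality of Lemma~\ref{leam:keyIEQ}. Abbreviate ${\bf F}^\pm := {\bf n}\cdot{\bf F}({\bf U}^\pm)$, $a^\pm := {\bf F}^\pm\cdot{\bm\xi}_*$, $u^\pm := {\bf U}^\pm\cdot{\bm\xi}_*$, and $\widehat\sigma_* := \max\{\widehat\sigma({\bf U}^-;{\bf n}),\widehat\sigma({\bf U}^+;{\bf n})\}$. By construction $\sigma_l\le 0\le\sigma_r$, and strict hyperbolicity forces $\sigma_r>\sigma_l$ (equality would require $\lambda_{\bf n}^{(4)}\le 0\le\lambda_{\bf n}^{(1)}$ for some state). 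I would therefore first rewrite \eqref{eq:HLL} as the convex combination
\begin{equation*}
\widehat{\bf F}^{hll}({\bf U}^-,{\bf U}^+;{\bf n})=\alpha\,({\bf F}^- - \sigma_l{\bf U}^-)+\beta\,({\bf F}^+ - \sigma_r{\bf U}^+),\qquad \alpha:=\frac{\sigma_r}{\sigma_r-\sigma_l},\quad \beta:=\frac{-\sigma_l}{\sigma_r-\sigma_l},
\end{equation*}
with $\alpha,\beta\ge 0$ and $\alpha+\beta=1$. Dotting with ${\bm\xi}_*$ then gives $\widehat{\bf F}^{hll}\cdot{\bm\xi}_*=\alpha(a^- - \sigma_l u^-)+\beta(a^+ - \sigma_r u^+)$, where $u^\pm>0$ by the $G_u^{(2)}$ characterization of Lemma~\ref{lem:Gu2}.

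The next step is to control the two terms. I would record that the spectral radius dominates the largest eigenvalue, $\widehat\sigma({\bf U};{\bf n})\ge\lambda_{\bf n}^{(4)}({\bf U})$, which is a one-line check since their difference equals $(|v_{\bf n}|-v_{\bf n})(1-c_s^2)/(1-\|{\bm v}\|^2c_s^2)\ge 0$; together with $\widehat\sigma\ge 0$ this yields $\widehat\sigma_*\ge\sigma_r$ and $\widehat\sigma_*\ge\lambda_{\bf n}^{(4)}({\bf U}^\pm)$. Applying Lemma~\ref{leam:keyIEQ} to ${\bf U}^+$ with $\lambda=\sigma_r\ge\lambda_{\bf n}^{(4)}({\bf U}^+)$ gives $a^+<\sigma_r u^+$, so the $\beta$-term is $\le 0$; applying it to ${\bf U}^-$ with $\lambda=\widehat\sigma_*\ge\lambda_{\bf n}^{(4)}({\bf U}^-)$ gives $a^-<\widehat\sigma_* u^-$, whence $\alpha(a^- - \sigma_l u^-)\le\alpha(\widehat\sigma_*-\sigma_l)u^-$.

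It then remains to verify the scalar inequality $\alpha(\widehat\sigma_*-\sigma_l)\le\widehat\sigma_*$. Clearing the positive denominator $\sigma_r-\sigma_l$, this is equivalent to $\sigma_l(\sigma_r-\widehat\sigma_*)\ge 0$, which holds because $\sigma_l\le 0$ and $\sigma_r\le\widehat\sigma_*$. Combining the three estimates with $u^->0$ gives $\widehat{\bf F}^{hll}\cdot{\bm\xi}_*\le\alpha(\widehat\sigma_*-\sigma_l)u^-\le\widehat\sigma_* u^-$, i.e.\ the first asserted inequality after negating. The second inequality is the analogous statement for the first component: the leading entries of ${\bf F}^\pm$ and ${\bf U}^\pm$ are $D^\pm v_{\bf n}^\pm$ and $D^\pm>0$, so $\widehat D^{hll}=\alpha D^-(v_{\bf n}^- - \sigma_l)+\beta D^+(v_{\bf n}^+ - \sigma_r)$; since $v_{\bf n}^+<\lambda_{\bf n}^{(4)}({\bf U}^+)\le\sigma_r$ and $v_{\bf n}^-<\lambda_{\bf n}^{(4)}({\bf U}^-)\le\widehat\sigma_*$ by Lemma~\ref{lem:IEQ3}, the very same three-step bound (with $u^\pm$ replaced by $D^\pm$ and $a^\pm$ by $D^\pm v_{\bf n}^\pm$) yields $\widehat D^{hll}\le\widehat\sigma_* D^-$.

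I expect the main obstacle to be the asymmetry of the target bound, whose right-hand side involves only the left state ${\bf U}^-$ (resp.\ $D^-$). The device that resolves it is to keep the entire right-state contribution (the $\beta$-term) non-positive while shrinking the coefficient of $u^-$ (resp.\ $D^-$) down to $\widehat\sigma_*$; this hinges on the sharp comparison $\widehat\sigma_*\ge\sigma_r$ between the spectral radius and the largest signal speed, and on invoking Lemma~\ref{leam:keyIEQ} at precisely the speeds $\lambda=\sigma_r$ and $\lambda=\widehat\sigma_*$. Everything else is routine sign-tracking using $\sigma_l\le 0\le\sigma_r$ and $u^\pm,D^\pm>0$.
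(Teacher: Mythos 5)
Your proof is correct and follows essentially the same route as the paper's: both split the HLL flux into a left-state term and a right-state term, invoke Lemma~\ref{leam:keyIEQ} to make the ${\bf U}^+$ contribution non-positive, and use $\lambda_{\bf n}^{(4)}({\bf U}^\pm)\le\sigma_r\le\widehat\sigma_*$ together with $\sigma_l\le 0$ and ${\bf U}^-\cdot{\bm\xi}_*>0$ (resp.\ $D^->0$) to bound the remaining coefficient by $\widehat\sigma_*$. The only differences are cosmetic --- you phrase the splitting as an explicit convex combination and apply the key lemma at $\lambda=\widehat\sigma_*$ for the minus state, whereas the paper applies it at $\lambda=\lambda_{\bf n}^{(4)}$ and then passes through the intermediate bound $-\sigma_r\,{\bf U}^-\cdot{\bm\xi}_*$.
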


\begin{proof}
Thanks to Lemmas \ref{lem:Gu2} and \ref{leam:keyIEQ}, we have 
$$
-{\bf U}^\pm  \cdot {\bm \xi}_*>0, \qquad - \left( {\bf n} \cdot {\bf F} ( {\bf U}^\pm ) \right) \cdot {\bm \xi}_* > - \lambda_{\bf n}^{(4)} ({\bf U}^\pm) \left( {\bf U}^\pm  \cdot {\bm \xi}_* \right).
$$
Note that $ \sigma_r \ge 0$, $\sigma_l \le 0$, and $ \max_{ {\bf U} \in \{ {\bf U}^-,{\bf U}^+ \}  } \widehat \sigma ( {\bf U}; {\bf n} ) \ge \sigma_r \ge \lambda_{\bf n}^{(4)} ({\bf U}^\pm)$. 
Therefore 
\begin{align*}
	- \widehat{\bf F}^{hll} ( {\bf U}^-, {\bf U}^+; {\bf n} ) \cdot {\bm \xi}_*  & =  
	\frac{ \sigma_r \left( - {\bf n} \cdot {\bf F}({\bf U}^-) \right) \cdot {\bm \xi}_* 
		+(- \sigma_l) \left( - {\bf n}  \cdot	{\bf F}({\bf U}^+)
		\right) \cdot {\bm \xi}_*
		 - \sigma_l \sigma_r ( {\bf U}^+ - {\bf U}^- )\cdot {\bm \xi}_*
	} { \sigma_r - \sigma_l }
\\
& \ge 	\frac{- \sigma_r   \lambda_{\bf n}^{(4)} ({\bf U}^-) \left( {\bf U}^- \cdot {\bm \xi}_* \right)
	+ \sigma_l \lambda_{\bf n}^{(4)} ({\bf U}^+)
\left( {\bf U}^+ \cdot {\bm \xi}_* \right)
	- \sigma_l \sigma_r ( {\bf U}^+ - {\bf U}^- )\cdot {\bm \xi}_*
} { \sigma_r - \sigma_l }
\\
& = \frac{ \sigma_l \sigma_r - \sigma_r \lambda_{\bf n}^{(4)} ({\bf U}^-) }{\sigma_r - \sigma_l}  \left( {\bf U}^- \cdot {\bm \xi}_* \right) 
+ \frac{ (-\sigma_l) \left(  \sigma_r - \lambda_{\bf n}^{(4)} ({\bf U}^+) \right) }{\sigma_r - \sigma_l } \left( {\bf U}^+ \cdot {\bm \xi}_* \right)
\\
& \ge  \frac{ \sigma_l \sigma_r - \sigma_r \sigma_r }{\sigma_r - \sigma_l}  \left( {\bf U}^- \cdot {\bm \xi}_* \right) + 0 \left( {\bf U}^+ \cdot {\bm \xi}_* \right)
\\
& = -\sigma_r \left( {\bf U}^- \cdot {\bm \xi}_* \right) 
\ge - \max_{ {\bf U} \in \{ {\bf U}^-,{\bf U}^+ \}  } \widehat \sigma ( {\bf U}; {\bf n} ) \left( {\bf U}^- \cdot {\bm \xi}_* \right),
\\
	- \widehat{D}^{hll} ( {\bf U}^-, {\bf U}^+; {\bf n} )   & =  
\frac{ \sigma_r \left( - D^- v^-_{\bf n} \right) 
	+(- \sigma_l) \left( - D^+ v^+_{\bf n}
	\right) 
	- \sigma_l \sigma_r ( D^+ - D^- )
} { \sigma_r - \sigma_l }
\\
&=  \frac{ \sigma_l \sigma_r - \sigma_r v^-_{\bf n} }{\sigma_r - \sigma_l}  D^- 
+ \frac{ (-\sigma_l) \left(  \sigma_r - v^+_{\bf n} \right) }{\sigma_r - \sigma_l } D^+ 
\\
& \ge  \frac{ \sigma_l \sigma_r - \sigma_r \sigma_r }{\sigma_r - \sigma_l}  D^- 
\ge - \max_{ {\bf U} \in \{ {\bf U}^-,{\bf U}^+ \}  } \widehat \sigma ( {\bf U}; {\bf n} ) D^-.
\end{align*}
The proof is completed. 
\end{proof}

Based on the above lemmas, we are now ready to give the rigorous proof of the PCP property \eqref{eq:constraint-preservingproperty} for our high-order finite volume method.

\begin{theorem}\label{THCPP}
The proposed finite volume method satisfies the PCP property \eqref{eq:constraint-preservingproperty}, if $\widetilde {\bf U}_h \in \mathbb{G}_h^k$, then 
	\begin{equation}\label{eq:constraint-preservingproperty00}
	\overline{\bf U}_K + \Delta t \vec{L}_K( \widetilde {\bf U}_h)  \in G_u \quad \forall K\in\mathcal{T}_h,  
\end{equation}
under the CFL condition 
\begin{equation}\label{eq:CFL}
\Delta t \widehat \sigma_K^{(j)} \frac{ \left| {\mathcal E}_K^j \right| }{ |K| } 
\le \frac{2}{3} \widehat \omega_1,
\end{equation}
where $\widehat{\omega}_1 = \frac{1}{ L ( L-1 ) }$ is the first weight of
the $L$-point Gauss-Lobatto quadrature with $L=\left\lceil \frac{k+3}2 \right\rceil $, and
$$
\widehat \sigma_K^{(j)} := \max_{ {\bf U} \in \left\{ \widetilde {\bf U}_{jq}^{ {\rm int}(K) }, \widetilde {\bf U}_{jq}^{ {\rm ext}(K) },\forall q \right \} } \widehat \sigma ( {\bf U}; {\bf n}_K^{(j)} ).
$$ 
\end{theorem}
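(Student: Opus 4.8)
The plan is to exploit the linear characterization $G_u^{(2)}$ from Lemma~\ref{lem:Gu2}, which is the crux of the quasilinear (GQL) strategy: membership in $G_u$ is equivalent to the \emph{two linear} conditions $D>0$ and ${\bf U}\cdot{\bm\xi}_*>0$ for every ${\bm v}_*\in\mathbb B_1({\bf 0})$, with ${\bm\xi}_*=(-\sqrt{1-\|{\bm v}_*\|^2},-{\bm v}_*,1)^\top$. Because both constraints are linear in ${\bf U}$, they interact cleanly with the convexity of $G_u$ and with the HLL bound of Lemma~\ref{lem:HLL}, so the whole argument collapses to verifying scalar inequalities.

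First I would split the cell average through the Gauss--Lobatto decomposition already embedded in $\mathbb G_h^k$. Since $\widetilde{\bf U}_h\in\mathbb G_h^k$, the state $\widehat{\bf U}^{(*)}:=\frac{1}{1-2\widehat\omega_1}\big(\overline{\bf U}_K-\frac23\widehat\omega_1\sum_{j,q}\omega_q\widetilde{\bf U}_{jq}^{\mathrm{int}(K)}\big)$ lies in $G_u$, which yields the convex identity $\overline{\bf U}_K=\frac23\widehat\omega_1\sum_{j,q}\omega_q\widetilde{\bf U}_{jq}^{\mathrm{int}(K)}+(1-2\widehat\omega_1)\widehat{\bf U}^{(*)}$ (the coefficients sum to one because $\sum_q\omega_q=1$ and $0\le2\widehat\omega_1\le1$). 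Substituting this into $\overline{\bf U}_K+\Delta t\,{\bf L}_K(\widetilde{\bf U}_h)$ and pairing each HLL flux with its matching interior trace, I would obtain
\[
\overline{\bf U}_K+\Delta t\,{\bf L}_K(\widetilde{\bf U}_h)=(1-2\widehat\omega_1)\widehat{\bf U}^{(*)}+\tfrac23\widehat\omega_1\sum_{j,q}\omega_q\,\Theta_{jq},\quad \Theta_{jq}:=\widetilde{\bf U}_{jq}^{\mathrm{int}(K)}-\tfrac{3\Delta t|{\mathcal E}_K^j|}{2\widehat\omega_1|K|}\widehat{\bf F}^{hll}_{jq},
\]
where $\widehat{\bf F}^{hll}_{jq}=\widehat{\bf F}^{hll}(\widetilde{\bf U}_{jq}^{\mathrm{int}(K)},\widetilde{\bf U}_{jq}^{\mathrm{ext}(K)};{\bf n}_K^{(j)})$. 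By the convexity of $G_u$ it then suffices to prove $\Theta_{jq}\in G_u$ for every $j,q$, since $\widehat{\bf U}^{(*)}\in G_u$ is already known.

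To verify $\Theta_{jq}\in G_u=G_u^{(2)}$ I would test against ${\bm\xi}_*$ and separately read off the $D$-component. Applying the two estimates of Lemma~\ref{lem:HLL} with ${\bf U}^-=\widetilde{\bf U}_{jq}^{\mathrm{int}(K)}$, ${\bf U}^+=\widetilde{\bf U}_{jq}^{\mathrm{ext}(K)}$, and the bound $\max_{\bf U}\widehat\sigma({\bf U};{\bf n}_K^{(j)})\le\widehat\sigma_K^{(j)}$, I get
\[
\Theta_{jq}\cdot{\bm\xi}_*\ \ge\ \Big(1-\tfrac{3\Delta t|{\mathcal E}_K^j|}{2\widehat\omega_1|K|}\widehat\sigma_K^{(j)}\Big)\big(\widetilde{\bf U}_{jq}^{\mathrm{int}(K)}\cdot{\bm\xi}_*\big),
\]
together with the identical bound on the first component. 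The CFL condition~\eqref{eq:CFL} forces the bracket to be nonnegative, while $\widetilde{\bf U}_{jq}^{\mathrm{int}(K)}\in G_u=G_u^{(2)}$ guarantees $\widetilde{\bf U}_{jq}^{\mathrm{int}(K)}\cdot{\bm\xi}_*>0$ and $\widetilde D_{jq}^{\mathrm{int}(K)}>0$. Both $G_u^{(2)}$ constraints therefore hold for $\Theta_{jq}$ and for every admissible ${\bm v}_*$, so $\Theta_{jq}\in G_u$, and convexity finishes the proof.

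The genuinely hard analysis lives in the supporting lemmas---especially the flux inequality of Lemma~\ref{leam:keyIEQ} and its HLL consequence in Lemma~\ref{lem:HLL}---so within the theorem itself the only delicate points are bookkeeping. The first is arranging the convex splitting so that the factor $\frac{3\Delta t|{\mathcal E}_K^j|}{2\widehat\omega_1|K|}$ matches exactly the threshold in~\eqref{eq:CFL}; the second, which I expect to be the main obstacle, is strictness: Lemma~\ref{lem:HLL} supplies only ``$\ge$'' and~\eqref{eq:CFL} uses ``$\le$'', so to land strictly inside $G_u$ rather than on its boundary I would either impose a strict CFL number or exploit the strict inequality in Lemma~\ref{leam:keyIEQ} together with the slack $\widehat\sigma_K^{(j)}\ge\sigma_r$ to show the bracket is strictly positive.
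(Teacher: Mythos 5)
Your proposal is correct and follows essentially the same route as the paper: the linear characterization $G_u^{(2)}$ from Lemma \ref{lem:Gu2}, the Gauss--Lobatto splitting of $\overline{\bf U}_K$ built into $\mathbb G_h^k$, the HLL bounds of Lemma \ref{lem:HLL}, and the CFL condition; the paper merely keeps the computation as a direct estimate of $D_{\Delta t}$ and ${\bf U}_{\Delta t}\cdot{\bm\xi}_*$ rather than isolating the intermediate states $\Theta_{jq}$. The strictness issue you flag as the main obstacle is not actually one, and you do not need a strict CFL number: under non-strict CFL you only get $\Theta_{jq}\cdot{\bm\xi}_*\ge 0$ and $\Theta_{jq}$'s first component $\ge 0$ (so, closure membership rather than $\Theta_{jq}\in G_u$), but since the $G_u^{(2)}$ constraints are \emph{linear}, the convex combination $(1-2\widehat\omega_1)\widehat{\bf U}^{(*)}+\frac23\widehat\omega_1\sum_{j,q}\omega_q\Theta_{jq}$ inherits strict positivity from the term $\widehat{\bf U}^{(*)}\in G_u$, which carries the strictly positive weight $1-2\widehat\omega_1>0$. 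This is exactly how the paper gets ``$>0$'': the strict inequalities \eqref{eq:DDDD}--\eqref{eq:UUUU} (i.e., the strict admissibility of $\widehat{\bf U}^{(*)}$) supply the slack, while the per-quadrature-point flux terms are only required to be nonnegative.
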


\begin{proof}
Because $\widetilde{\vec{U}}_h\in {\mathbb{G}}_h^k$, we have 
\begin{equation*}
	\widetilde{\vec{U}}_{jq}^{{\rm int}(K)} :=	\widetilde{\vec{U}}_h^{\text{int}(K)}({\bm x}_K^{(jq)}) \in G_u, \quad \widetilde{\vec{U}}_{jq}^{{\rm ext}(K)} := \widetilde{\vec{U}}_h^{\text{ext}(K)}({\bm x}_K^{(jq)}) \in G_u,\quad \forall j,q,K,
\end{equation*}
and 
$$
\frac{ \overline{\vec{U}}_K-\frac23\widehat{\omega}_1\sum\limits_{j=1}^3\sum\limits_{q=1}^Q\omega_{q} \widetilde{\vec{U}}_{jq}^{{\rm int}(K)}  }{1-2\widehat{\omega}_1}  \in G_u,
$$
which implies 
\begin{align}\label{eq:DDDD}
	 \overline{D}_K & > \frac23\widehat{\omega}_1\sum\limits_{j=1}^3\sum\limits_{q=1}^Q\omega_{q} \widetilde{D}_{jq}^{{\rm int}(K)},
	\\ \label{eq:UUUU} 
	\overline{\vec{U}}_K \cdot {\bm \xi}_* 	& > \frac23\widehat{\omega}_1\sum\limits_{j=1}^3\sum\limits_{q=1}^Q\omega_{q} \widetilde{\vec{U}}_{jq}^{{\rm int}(K)} \cdot {\bm \xi}_*.
\end{align}

Define $\overline{\bf U}_K + \Delta t \vec{L}_K( \widetilde {\bf U}_h)=: {\bf U}_{\Delta t} = ( D_{\Delta t}, {\bm m}_\Delta, E_{\Delta t} )^\top$. Let us first prove the positivity of $D_{\Delta t}$. 
Thanks to Lemma \ref{lem:HLL} and equation \eqref{eq:DDDD}, we obtain 
\begin{align*}
D_{\Delta t} & = \overline{D}_K 	- \frac{ \Delta t }{ |K| } \sum_{j=1}^{3}|\mathcal{E}_K^j|\sum_{q=1}^{\text{Q}} \omega_q \widehat{D}^{hll} \left( \widetilde{\vec{U}}_{jq}^{{\rm int}(K)}, \widetilde{\vec{U}}_{jq}^{{\rm ext}(K)}; {\bf n}_K^{(j)} \right)
\\
& \ge \frac23\widehat{\omega}_1\sum\limits_{j=1}^3\sum\limits_{q=1}^Q\omega_{q} \widetilde{D}_{jq}^{{\rm int}(K)} 	- \frac{ \Delta t }{ |K| } \sum_{j=1}^{3}|\mathcal{E}_K^j|\sum_{q=1}^{\text{Q}} \omega_q 
\widehat \sigma_K^{(j)} \widetilde{D}_{jq}^{{\rm int}(K)}
\\
& = \sum\limits_{j=1}^3\sum\limits_{q=1}^Q\omega_{q} \left(
\frac{2}{3} \widehat \omega_1 - 
\Delta t \widehat \sigma_K^{(j)} \frac{ \left| {\mathcal E}_K^j \right| }{ |K| } 
\right) \widetilde{D}_{jq}^{{\rm int}(K)} > 0,
\end{align*} 
where the CFL condition \eqref{eq:CFL} has been used in the last inequality. 
Similarly, using Lemma \ref{lem:HLL} and equation \eqref{eq:UUUU}, we obtain for any ${\bm v}_* \in \mathbb B_1({\bf 0})$ that 
\begin{align*}
	{\bf U}_{\Delta t} \cdot {\bm \xi}_* & = \overline{\bf U}_K \cdot {\bm \xi}_*	- \frac{ \Delta t }{ |K| } \sum_{j=1}^{3}|\mathcal{E}_K^j|\sum_{q=1}^{\text{Q}} \omega_q \widehat{\bf F}^{hll} \left( \widetilde{\vec{U}}_{jq}^{{\rm int}(K)}, \widetilde{\vec{U}}_{jq}^{{\rm ext}(K)}; {\bf n}_K^{(j)} \right) \cdot {\bm \xi}_*
	\\
	&  \ge \frac23\widehat{\omega}_1\sum\limits_{j=1}^3\sum\limits_{q=1}^Q\omega_{q} \widetilde{\vec{U}}_{jq}^{{\rm int}(K)} \cdot {\bm \xi}_*	
	- \frac{ \Delta t }{ |K| } \sum_{j=1}^{3}|\mathcal{E}_K^j|\sum_{q=1}^{\text{Q}} \omega_q 
	\widehat \sigma_K^{(j)}  \widetilde{\vec{U}}_{jq}^{{\rm int}(K)} \cdot {\bm \xi}_*
	\\
	& = \sum\limits_{j=1}^3\sum\limits_{q=1}^Q\omega_{q} \left(
	\frac{2}{3} \widehat \omega_1 - 
	\Delta t \widehat \sigma_K^{(j)} \frac{ \left| {\mathcal E}_K^j \right| }{ |K| } 
	\right) \widetilde{\vec{U}}_{jq}^{{\rm int}(K)} \cdot {\bm \xi}_* > 0.
\end{align*} 
Therefore, we have $ {\bf U}_{\Delta t} = \overline{\bf U}_K + \Delta t \vec{L}_K( \widetilde {\bf U}_h) \in G_u^{(2)} = G_u$. The proof is completed. 
\end{proof}

\subsection{Homogeneousity of our numerical method}

We now show that our numerical method inherits the homogeneity of the exact evolution operator in Proposition \ref{prop:invar}. 
From \eqref{eq:DML} one can deduce that 
$$
\sigma_l ( \zeta {\bf U}^-, \zeta {\bf U}^+; {\bf n}) = \sigma_l ({\bf U}^-, {\bf U}^+; {\bf n}), \qquad 
\sigma_r ( \zeta {\bf U}^-, \zeta {\bf U}^+; {\bf n}) = \sigma_r ({\bf U}^-, {\bf U}^+; {\bf n}).
$$
This along with ${\bf F}_i( \zeta {\bf U} ) = \zeta {\bf F}_i(  {\bf U} ) $ implies that  $\widehat{\bf F}^{hll} ( {\zeta \bf U}^-, {\zeta \bf U}^+; {\bf n} ) =\zeta \widehat{\bf F}^{hll} ( {\bf U}^-, {\bf U}^+; {\bf n} )$. Note that $\omega_q$, $|\mathcal{E}_K^j|$, and $|K|$ in \eqref{eq:FV} are independent of $\bf U$. 
We thus obtain  $\vec{L}_K(\zeta \vec{U}_h)=\zeta \vec{L}_K(\vec{U}_h)$.
Thanks to Lemma \ref{lem:charactweno} and the identity \eqref{eq:PiHOM}, we obtain 

\begin{theorem}\label{thrm:invarnumeric}
	Denote ${\mathcal S}_{h}(\overline{\bf U}) := \overline{\vec{U}}_K + \Delta t \vec{L}_K \left( {\Pi}_h \mathcal{R}_h^k \overline {\bf U} \right)$ be the single-step numerical evolution operator of our numerical scheme,
	then for any constant $\zeta>0$, we have 
	$$
	\frac{1}{\zeta} {\mathcal S}_h \big( \zeta \overline{\bf U} \big) = {\mathcal S}_h(\overline{\bf U}). 
	$$   
\end{theorem}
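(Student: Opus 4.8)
The plan is to rewrite the claimed identity $\frac{1}{\zeta}\mathcal{S}_h(\zeta\overline{\bf U}) = \mathcal{S}_h(\overline{\bf U})$ in the equivalent form $\mathcal{S}_h(\zeta\overline{\bf U}) = \zeta\mathcal{S}_h(\overline{\bf U})$ for every $\zeta>0$, i.e. to show that the single-step operator is positively homogeneous of degree one. Since $\mathcal{S}_h(\overline{\bf U}) = \overline{\bf U}_K + \Delta t\,\vec{L}_K(\Pi_h\mathcal{R}_h^k\overline{\bf U})$ is a composition of the reconstruction, the limiter, and the spatial operator (plus the identity map contributing $\overline{\bf U}_K$), I would obtain the result simply by chaining the degree-one homogeneity of each constituent piece, each of which has already been established earlier in the excerpt.

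Concretely, first I would pull the scalar through the reconstruction via Lemma \ref{lem:charactweno}, giving $\mathcal{R}_h^k(\zeta\overline{\bf U}) = \zeta\mathcal{R}_h^k(\overline{\bf U})$; then through the PCP limiter via the identity \eqref{eq:PiHOM}, giving $\Pi_h(\zeta\mathcal{R}_h^k\overline{\bf U}) = \zeta\Pi_h(\mathcal{R}_h^k\overline{\bf U})$; and finally through $\vec{L}_K$ using the homogeneity $\vec{L}_K(\zeta\vec{U}_h) = \zeta\vec{L}_K(\vec{U}_h)$ verified in the paragraph immediately preceding the theorem. That last fact is the only one requiring a short computation, and it rests on three inputs already available: the scale-invariance \eqref{eq:DML} of the HLL wave speeds $\sigma_l,\sigma_r$, the flux homogeneity ${\bf F}_i(\zeta{\bf U}) = \zeta{\bf F}_i({\bf U})$ from \eqref{eq:HOG}, and the fact that the quadrature weights $\omega_q$ and geometric data $|\mathcal{E}_K^j|,|K|$ are independent of $\bf U$. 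Substituting these three equalities into the definition yields $\mathcal{S}_h(\zeta\overline{\bf U}) = \zeta\overline{\bf U}_K + \Delta t\,\zeta\vec{L}_K(\Pi_h\mathcal{R}_h^k\overline{\bf U}) = \zeta\mathcal{S}_h(\overline{\bf U})$, and dividing by $\zeta$ finishes the argument.

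There is no deep obstacle here; the work is entirely in verifying that each building block is \emph{exactly} homogeneous. The single point meriting care is the limiter: the identity \eqref{eq:PiHOM} holds only when the safeguard parameters $\varepsilon_D,\varepsilon_g$ are neglected, since with $\varepsilon_D=\varepsilon_g=0$ the cutoff factors $\theta_D,\theta_g$ in \eqref{constraint-preservinglimit1}--\eqref{constraint-preservinglimit2} depend only on ratios invariant under $\overline{\bf U}\mapsto\zeta\overline{\bf U}$, whereas the machine-level cutoffs $\le 10^{-13}$ break exact scaling. I would therefore state the theorem, as the paper does, under the convention that these round-off safeguards are ignored. For consistency I would also note that the scaling leaves the admissible time step unchanged, because the spectral bound $\widehat\sigma_K^{(j)}$ entering the CFL condition \eqref{eq:CFL} is itself scale-invariant by \eqref{eq:DML}, so the same $\Delta t$ is legitimate on both sides of the identity.
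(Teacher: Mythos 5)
Your proposal is correct and follows essentially the same route as the paper: the paper's proof is exactly the chain you describe --- homogeneity of $\widehat{\bf F}^{hll}$ from \eqref{eq:DML} and \eqref{eq:HOG}, independence of $\omega_q$, $|\mathcal{E}_K^j|$, $|K|$ from $\bf U$ to get $\vec{L}_K(\zeta\vec{U}_h)=\zeta\vec{L}_K(\vec{U}_h)$, then composition with Lemma \ref{lem:charactweno} and \eqref{eq:PiHOM}. Your added remarks on the $\varepsilon_D,\varepsilon_g$ caveat and the scale-invariance of the CFL bound are consistent with the paper's conventions.
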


	\subsection{Extension to axisymmetric RHD equations in cylindrical coordinates}\label{sec:RHDAXIS}
	In order to simulate the axisymmetric jet problem (see, e.g., Example \ref{jet} in Section \ref{sec:examples}), we discuss the application of the PCP finite volume scheme to    
	the axisymmetric  RHD equations in cylindrical coordinates $(r,z)$, which can be written as  
	\begin{equation}\label{eq:RHDcylind3D}
		\frac{\partial \vec{U}}{\partial t} + \frac{\partial\vec F_{1}(\vec{U})}{\partial r} + \frac{\partial\vec F_{2}(\vec{U})}{\partial z} = {\bf S(U},r),
	\end{equation}
	where the flux ${\bf F}_i$ is the same as in \eqref{eq:UF1}--\eqref{eq:UF2}, $i=1,2,r\geq 0$, and the source term 
	\begin{equation*}
		{\bf S(U},r) = \frac{1}{r}(Dv_1, m_1v_1,m_2v_1,m_1)^{\top}.
	\end{equation*}
	All the fluid variables have the same meanings as in section \ref{sec:GovenEqn} except that the subscripts 1 and 2 denote radial and axial directions in cylindrical coordinates $(r,z)$. Similar to \eqref{eq:FV}, the semi-discrete finite volume scheme for the axisymmetric RHD equations \eqref{eq:RHDcylind3D} reads  	
	\begin{equation}\label{eq:FVSource}
		\frac{ {\rm d} \overline{{\bf U}}_K }{ {\rm d} t} =  \vec{L}_K(\vec{U}_h) + \overline{{\bf S}}_K,
	\end{equation}
	where $\overline{{\bf S}}_K$ is an approximation to the average of ${\bf S(U},r)$ over the cell $K$ which can be computed by, for example, the 2D quadrature rule in Remark \ref{2Dquadrature}. To achieve  high-order accuracy in time, the third-order SSP Runge--Kutta method is used. To ensure the PCP property, it suffices to guarantee that  
	\begin{equation}\label{eq:PCPSRHD}
		\overline{\bf U}_K + \Delta t \vec{L}_K( \widetilde {\bf U}_h) + \Delta t \overline{{\bf S}}_K \in G_u~~\forall K\in\mathcal{T}_h,  \quad
		\mbox{provided that } \widetilde {\bf U}_h \in \widetilde{\mathbb{G}}_h^k,
	\end{equation}
	where 
	\begin{align}\label{admissablesetsource}
		\widetilde{\mathbb{G}}_h^k:= \left \{\vec{u}\in \overline {\mathbb{G}}_h^k:~ {\bf u}_K^{ (jq) } \in G_u, 1\le j \le 3, 1\le q \le Q;~{\bf u}({\widetilde{\bm x}}_K^{ (q) }) \in G_u,1 \le q \le \widetilde{Q},\forall K \in\mathcal{T}_h \right \}. 
	\end{align}  
	Following Theorem \ref{THCPP} and \cite[Section 3.2]{WuTang2015}, one can deduce that if 
	the PCP limiter is used to enforce 
	$\widetilde {\bf U}_h \in \widetilde{\mathbb{G}}_h^k$, then the property \eqref{eq:PCPSRHD} is satisfied
	under the CFL type condition 
	$\Delta t \le \min\left\{{(1-\beta)} A_l, \beta A_s  \right\} $ with 
	$$A_l := \frac{2 \widehat \omega_1 |K|}{3 \max_{1\le j\le 3} \{ | {\mathcal E}_K^j | \widehat \sigma_K^{(j)} \} }, \quad A_s := \min\limits_{(r,z) \in \mathbb S_K, v_1( \widetilde {\bf U}_h(r,z) )>0} \left\{\frac{r g( \widetilde {\bf U}_h(r,z) )}{( p( \widetilde {\bf U}_h(r,z) ) + g( \widetilde {\bf U}_h(r,z) ) ) |v_1( \widetilde {\bf U}_h(r,z) )|}\right\},$$ 
	where ${\mathbb S}_K := \{{\bf x}_K^{ (jq)}, 1\le j \le 3, 1\le q \le Q;~{\widetilde{\bm x}}_K^{ (q) }, 1 \le q \le \widetilde{Q}\}$, and the parameter $\beta \in (0,1)$ can be taken as $\beta=\frac{A_l}{A_s + A_l}$.

	\section{Numerical tests}\label{sec:examples}
In this section, we will conduct several benchmark tests 
to validate the robustness, accuracy, and effectiveness of our PCP finite volume method on unstructured triangular meshes. 
All our triangular meshes are
generated by EASYMESH \cite{easymesh}, with all the grid points on the boundary uniformly distributed, and the length of the cell edges on the domain boundary will be denoted by $h$. 
Unless otherwise stated, the CFL number is taken as $0.5$, and  the ideal equation of state \eqref{EOS} with the ratio of specific heats $\Gamma = 5/3$ will be used in our computations.

\begin{expl}[Accuracy test]\label{ex:accurary}\rm
	To examine the accuracy of our method, we test two smooth relativistic isentropic vortexes propagating periodically  with a constant velocity  magnitude $w$ along the $(-1, -1)$ direction. 
	The computational domain $[-5,5]\times[-5,5]$ is divided into unstructured triangular cells with cell number  $N\in\{932,3728,14912,59648,238592\}$. 
	The setup is similar to those in \cite{BalsaraKim2016,LingDuanTang2019}. 
	 The initial rest-mass density and pressure are
	\begin{equation*}
		\rho(x,y) = (1-\alpha e^{1-r^2})^{\frac{1}{\Gamma-1}},\quad p = \rho^{\Gamma}
	\end{equation*}
	with 
	\begin{align*}
		&\alpha = \frac{(\Gamma-1)/\Gamma}{8\pi^2}\epsilon^2, \quad r = \sqrt{x^2_0 + y^2_0},\\
		&x_0=x+\frac{\gamma-1}{2}(x+y), \quad y_0 = y + \frac{\gamma-1}{2}(x+y),\quad \gamma=\frac{1}{\sqrt{1-w^2}},
	\end{align*}
	and the initial velocities are
	\begin{align*}
		&v_1 = \frac{1}{1-w(v^0_1+v^0_2)/\sqrt{2}}\left[\frac{v^0_1}{\gamma}-\frac{w}{\sqrt{2}}+\frac{\gamma w^2}{2(\gamma+1)}(v^0_1+v^0_2)\right]\\
		&v_2 = \frac{1}{1-w(v^0_1+v^0_2)/\sqrt{2}}\left[\frac{v^0_2}{\gamma}-\frac{w}{\sqrt{2}}+\frac{\gamma w^2}{2(\gamma+1)}(v^0_1+v^0_2)\right]
	\end{align*}
	with
	\begin{equation*}
		(v_1^0,v_2^0)=(-y_0,x_0)f,\quad f=\sqrt{\frac{\beta}{1+\beta r^2}},\quad \beta=\frac{2\Gamma \alpha e^{1-r^2}}{2\Gamma -1 -\Gamma \alpha e^{1-r^2}}.
	\end{equation*}
	
	For the first vortex, we take the speed $w=0.5\sqrt{2}$ and the vortex strength $\epsilon=5$. In this mild case, the PCP limiter is not needed.  
	Table \ref{tableaccuracy} lists the numerical errors of the rest-mass density $\rho$ in $l^1,l^2$-norms and the corresponding convergence rates at $t=1$ for different grid resolutions. The results show that the expected third-order convergence is obtained.
	
	\begin{table}[htbp]
		\setlength{\abovecaptionskip}{0.cm}
		\setlength{\belowcaptionskip}{-0.cm}
		\caption{Example \ref{ex:accurary}: Numerical errors and orders for  $\rho$ at $t = 0.15$ with vortex strength $\epsilon=5$.}\label{tableaccuracy}
		\begin{center}
			\begin{tabular}{*{7}{l}}
				\toprule
				N & $l^1$ error  & order  & $l^2$ error  & order\\
				\midrule
				932         &  6.33e-02            &-               & 2.77e-02  &-      \\
				3728       &  9.48e-03            &2.7376      & 3.98e-03  & 2.7980    \\
				14912      &  1.30e-03            &2.8662      & 5.21e-04  & 2.9323  \\
				59648     &  1.67e-04             &2.9590      & 6.59e-05  & 2.9829   \\
				238592   &  2.10e-05            &2.9971       & 8.23e-06   & 3.0015 \\							
				\bottomrule
			\end{tabular}
		\end{center}
	\end{table}
	
    In order to verify the PCP property of the proposed method, 
    we consider a much stronger vortex with $\epsilon=10.0828$.  
    In this case, the lowest pressure and density are $1.78\times10^{-20}$ and $7.8\times10^{-15}$ respectively. 
    The PCP limiter is required in this test to maintain the positivity of the pressure and density, otherwise the code will break down. 
     The numerical errors and orders of $\rho$ in $l^1,l^2$-norms
    are shown in Table \ref{tableaccuracy2}, which also displays 
     the ratio $\Theta$ of the number of the PCP limited cells to the total number of cells. 
	We observe that the PCP limiter is employed on only a few cells and does not destroy the accuracy of the scheme.
	\begin{table}[h]
		\setlength{\abovecaptionskip}{0.cm}
		\setlength{\belowcaptionskip}{-0.cm}
		\caption{Numerical errors and orders for $\rho$ at $t = 0.15$ with vortex strength $\epsilon=10.0828$.}\label{tableaccuracy2}
		\begin{center}
			\begin{tabular}{*{8}{l}}
				\toprule
				N & $l^1$ error  & order  & $l^2$ error  & order  &$\Theta$\\
				\midrule
				932      &1.44e-01   &-     & 5.68e-02  &-      &1.502\%\\
				3728     &2.54e-02   &2.50  & 1.16e-02  &2.30   &0.939\%\\
				14912    &3.38e-03   &2.91  & 1.44e-03  &3.01   &0.141\%\\
				59648    &4.741e-04  &2.83  & 2.25e-04  &2.68   &0.008\%\\
				238592   &5.730e-05  &3.05  & 2.68e-05  &3.07   &0.000\%\\
				\bottomrule
			\end{tabular}
		\end{center}
	\end{table}
\end{expl}

\begin{expl}[Quasi-1D Riemann problem \uppercase\expandafter{\romannumeral1}]\label{1Driemann1}\rm
	The initial data are taken as
	\begin{equation*}
		(\rho_0, {\bm v}_0,p_0) =
		\begin{cases}
			(1.0,-0.6,0,10),&x<0.5,\\
			(10,0.5,0,20),&x>0.5.
		\end{cases}
	\end{equation*}
This example investigates the capability of our scheme in resolving rarefaction waves and contact discontinuity. 
 We divide the computational domain $[0,1]\times [-1/100,1/100]$ into a triangular mesh with $h=1/500$. 
The outflow boundary conditions are applied to all boundaries.
 \figref{fig:1Driemann1} shows both the numerical (symbols ``$\circ$'') and the 
 exact (solid lines) solutions along the line $y=0$ at $t=0.4$ obtained by our third-order finite volume scheme. It is shown that the right and left moving rarefaction waves as well as the contact discontinuity are well captured.  
\end{expl}

\begin{figure}[htbp]
	\centering
	\subfigure[$\rho$]{\includegraphics[width=0.44\textwidth]{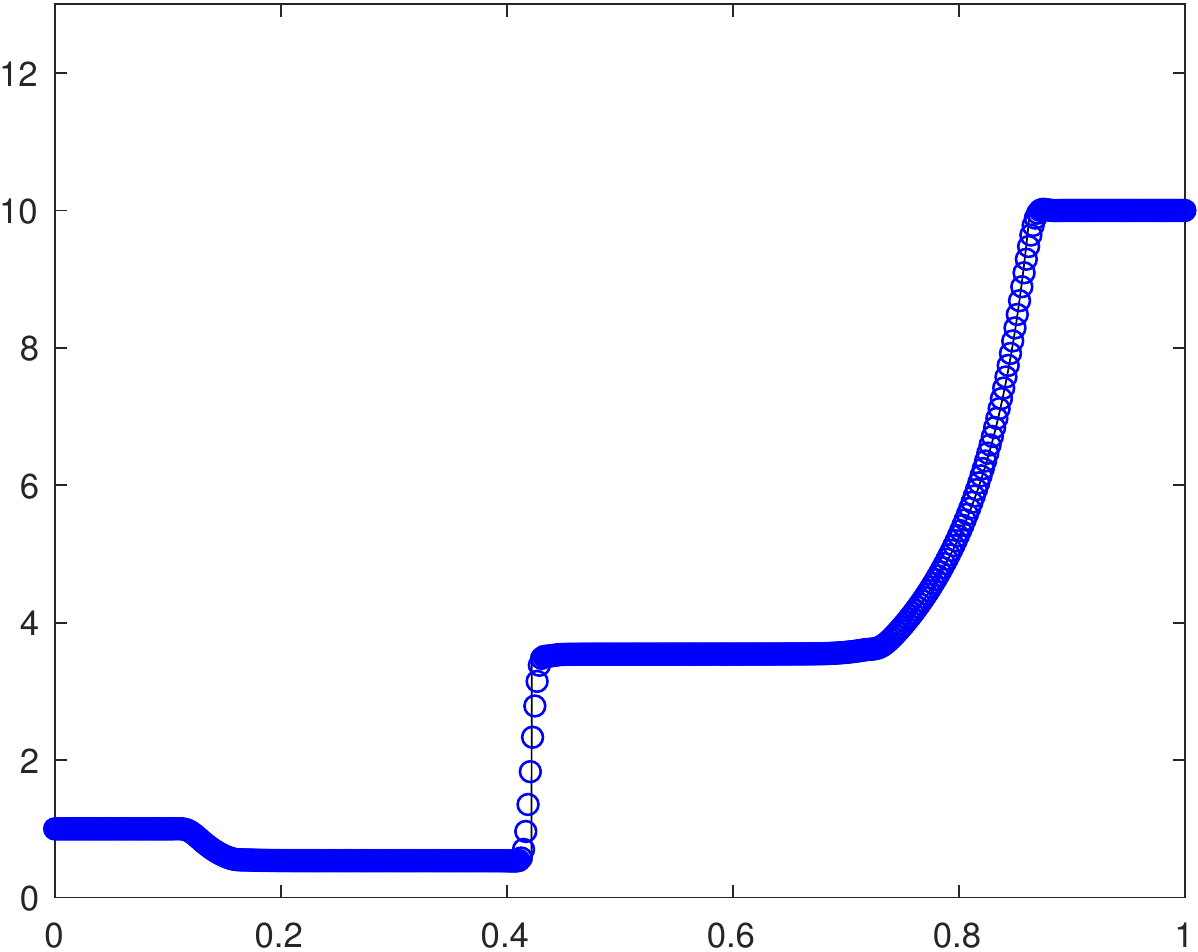}}
	\subfigure[$v_1$]{\includegraphics[width=0.45\textwidth]{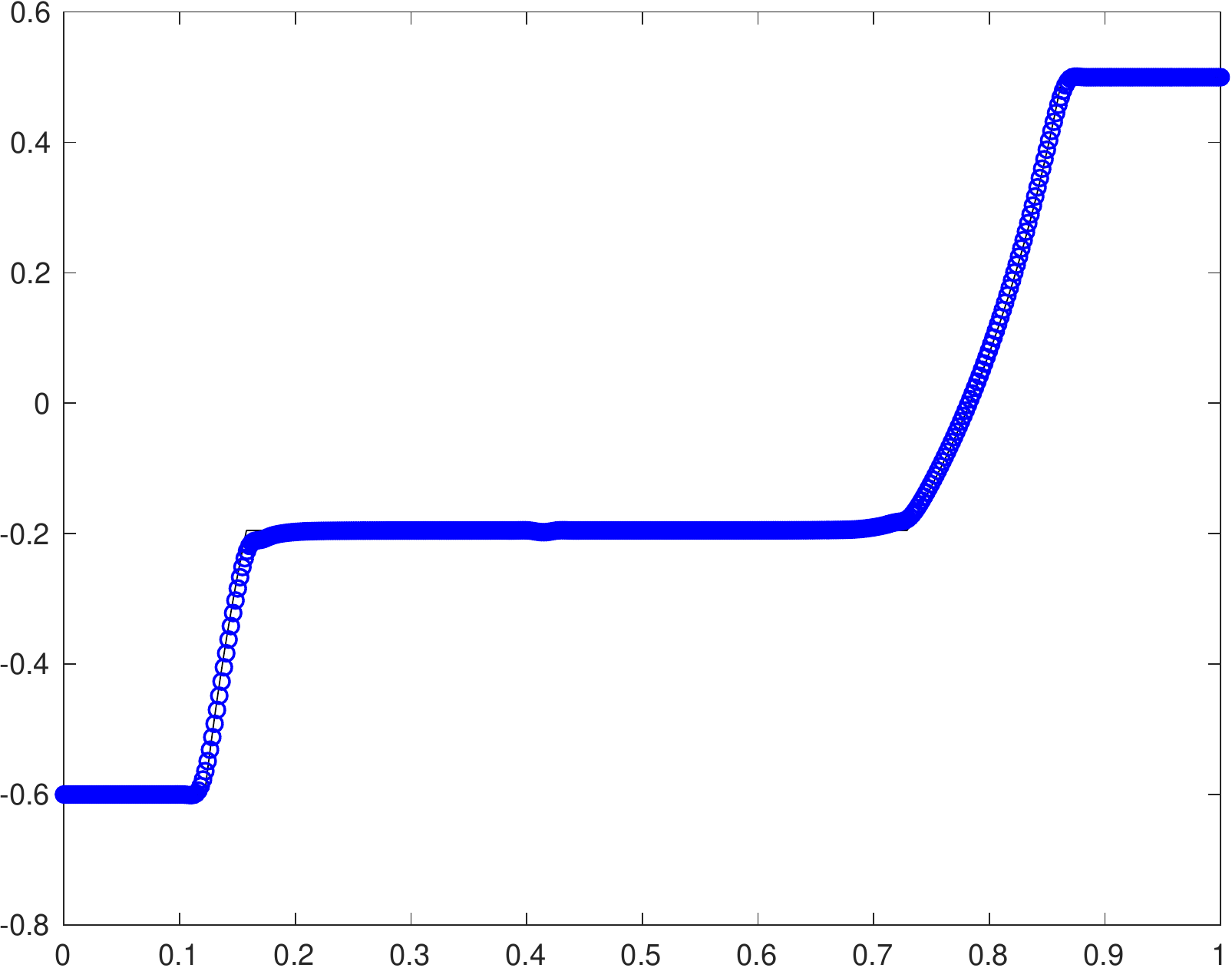}}
	\subfigure[$p$]{\includegraphics[width=0.45\textwidth]{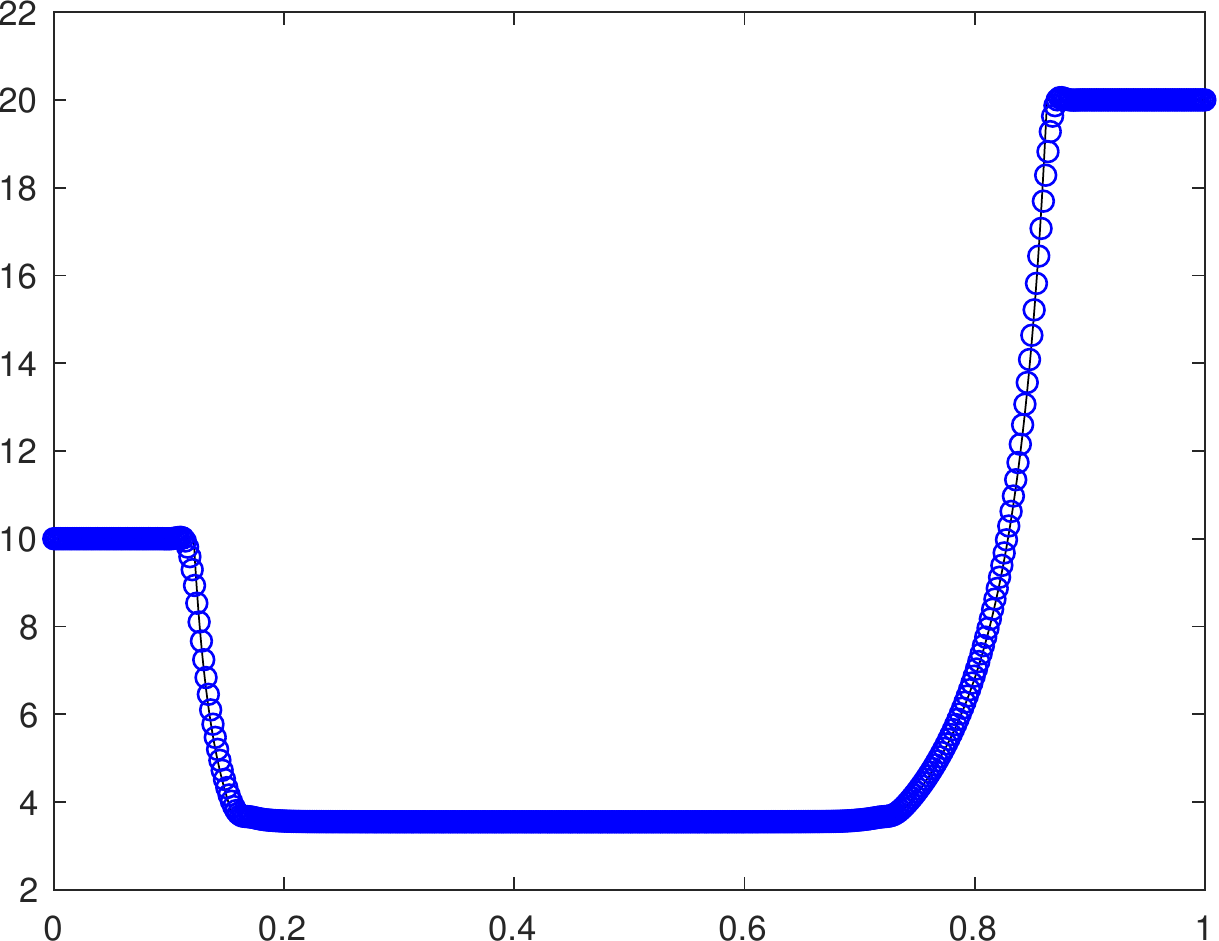}}
	\subfigure[3D density surface]{\includegraphics[width=0.45\textwidth]{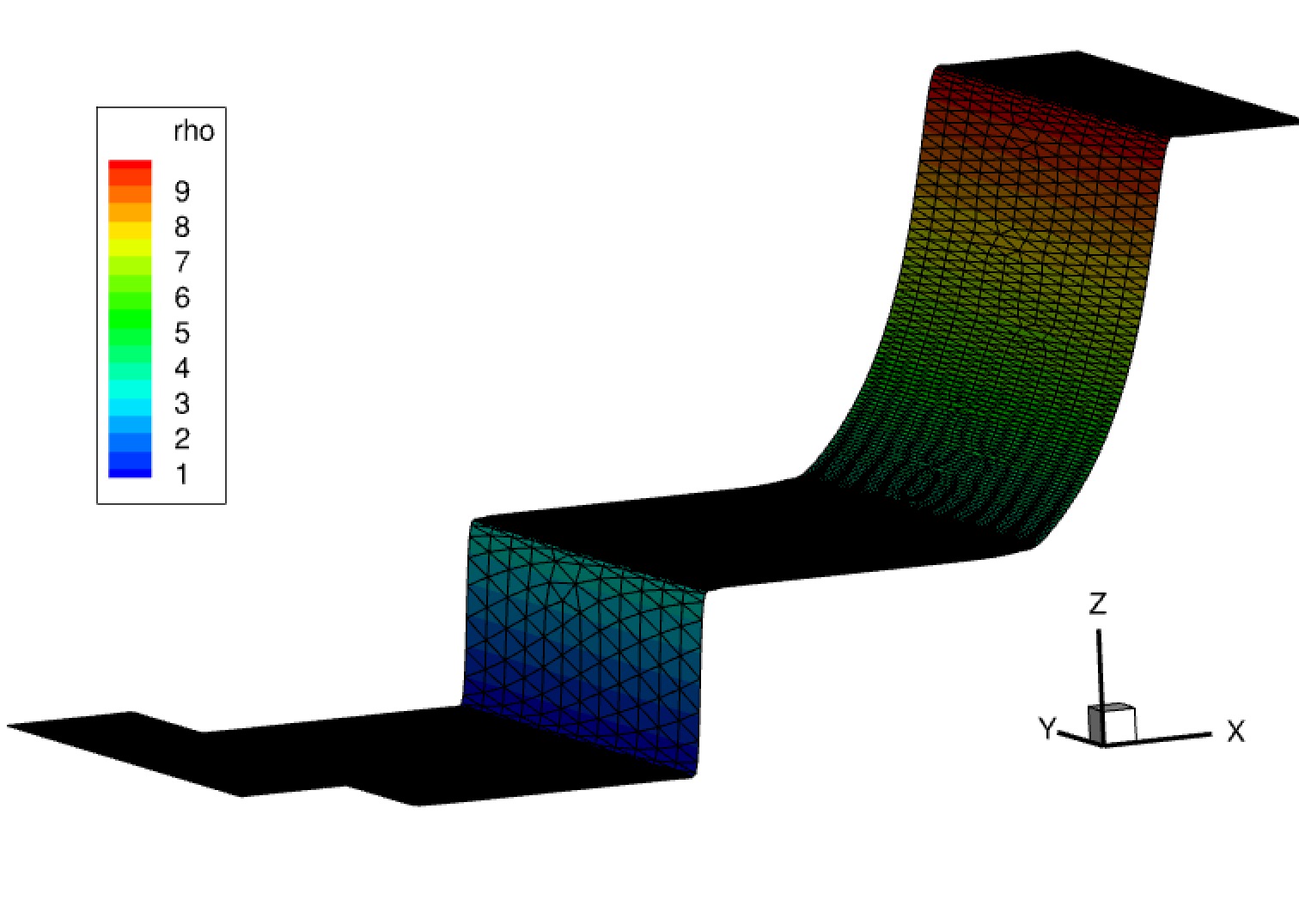}}
	\caption{Example \ref{1Driemann1}: The numerical (symbols ``$\circ$'') and the exact (solid lines) solutions of the density $\rho$, velocity $v_1$, pressure $p$ along the line $y=0$ as well as the 3D density surface at $t=0.4$. A triangular mesh with $h=1/500$ is used.} 
	\label{fig:1Driemann1}
\end{figure}

\begin{expl}[Quasi-1D Riemann problem \uppercase\expandafter{\romannumeral2}]\label{1Driemann2}\rm
	The second quasi-1D Riemann problem \cite{WuTang2015} describes the evolution of a right-moving shock wave and contact discontinuity as well as a left-moving rarefaction wave. The initial conditions are 
	\begin{equation}
		(\rho_0, {\bm v}_0,p_0)=
		\begin{cases}
			(1,0,0,10^4), &x<0.5,\\
			(1,0,0,10^{-8}), &x>0.5.
		\end{cases}
	\end{equation}
 The computational domain $[0,1]\times[-1/320,1/320]$ is divided into triangular cells with $h=1/1600$ and the outflow boundary conditions are applied to all boundaries. \figref{fig:1Driemann2} shows the numerical solutions (symbols ``$\circ$'') and the exact solutions (solid lines) of density $\rho$ and its close-up, velocity $v_1$, and pressure $p$ along the line $y=0$ at $t=0.45$. It is challenging to sharply resolve the shock and contact discontinuity since the region between them is extremely narrow; see \cite{WuTang2015}. 
 The results 
  demonstrate the good resolution of our scheme, 
  in comparison with the results of the ninth-order PCP finite difference WENO scheme \cite{WuTang2015} on uniform 1D grids. It should be noticed that the PCP limiter is essential to enforce the numerical solutions in $\mathbb{G}_h^k$; without the limiter the simulation would break down within a few time steps.

	\begin{figure}[htbp]
		\centering
		\subfigure[$ \rho $]{
			\includegraphics[width=0.46\textwidth]{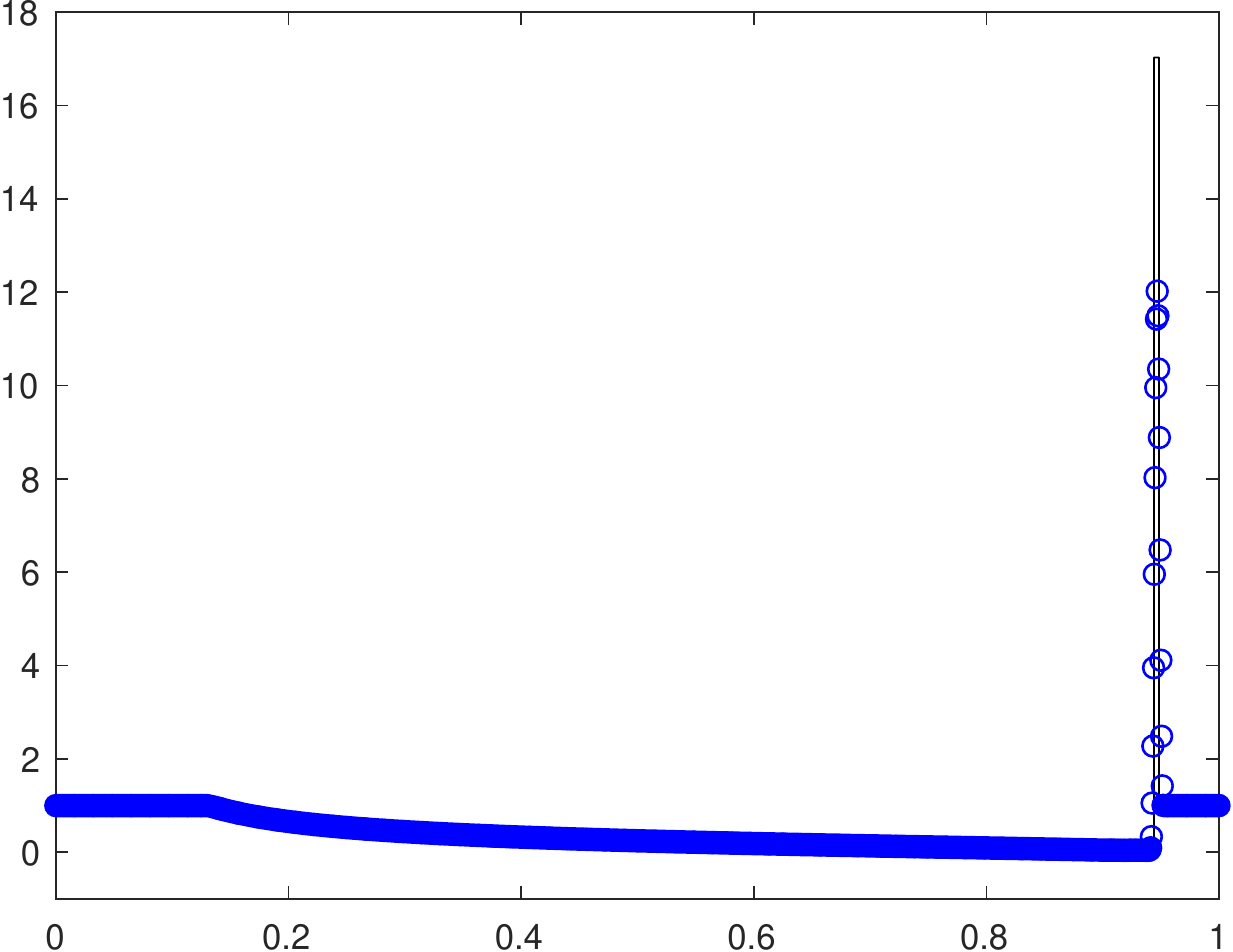}
		}
		\subfigure[Close-up of $\rho$]{
			\includegraphics[width=0.46\textwidth]{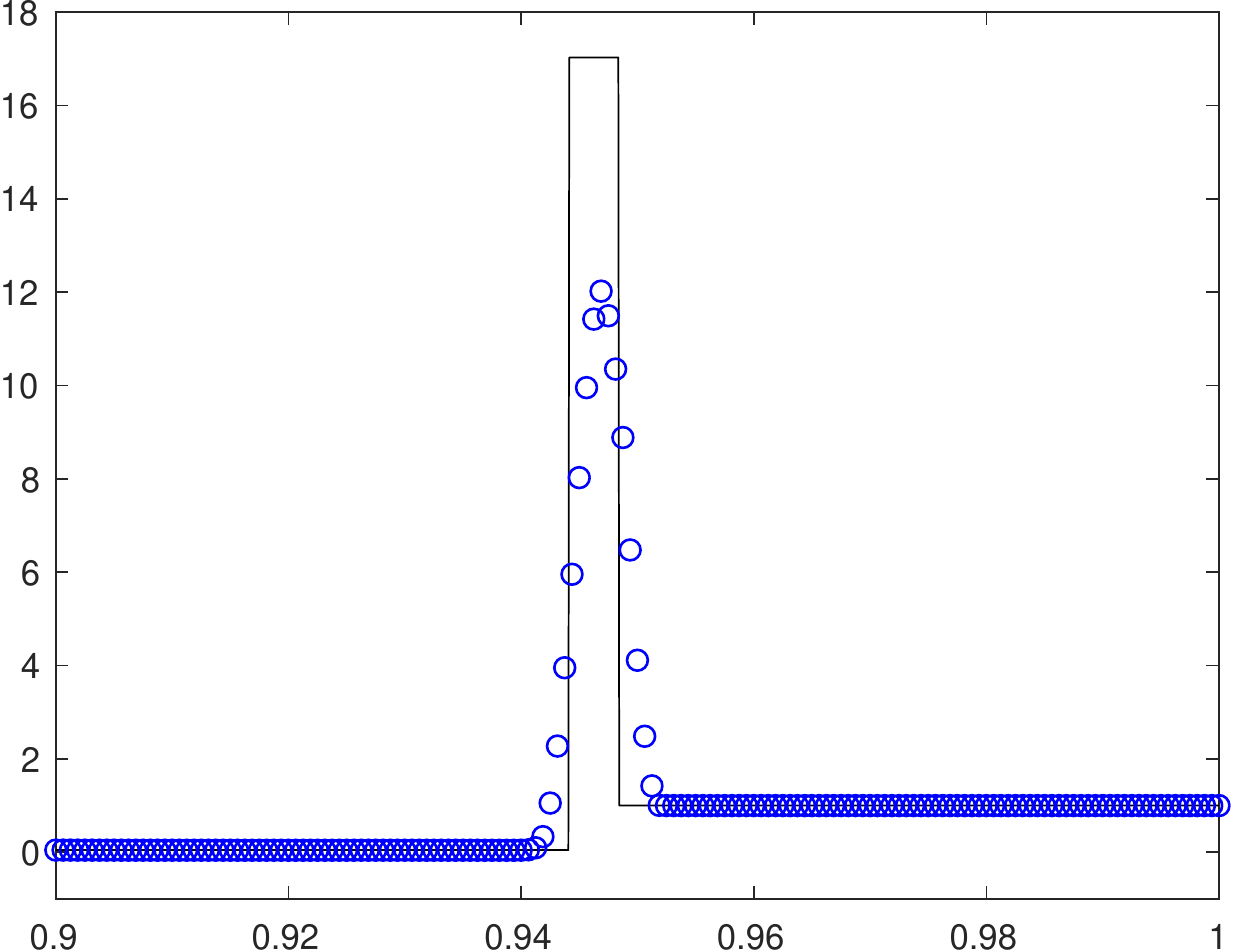}
		}
		\subfigure[$v_1$]{
			\includegraphics[width=0.45\textwidth]{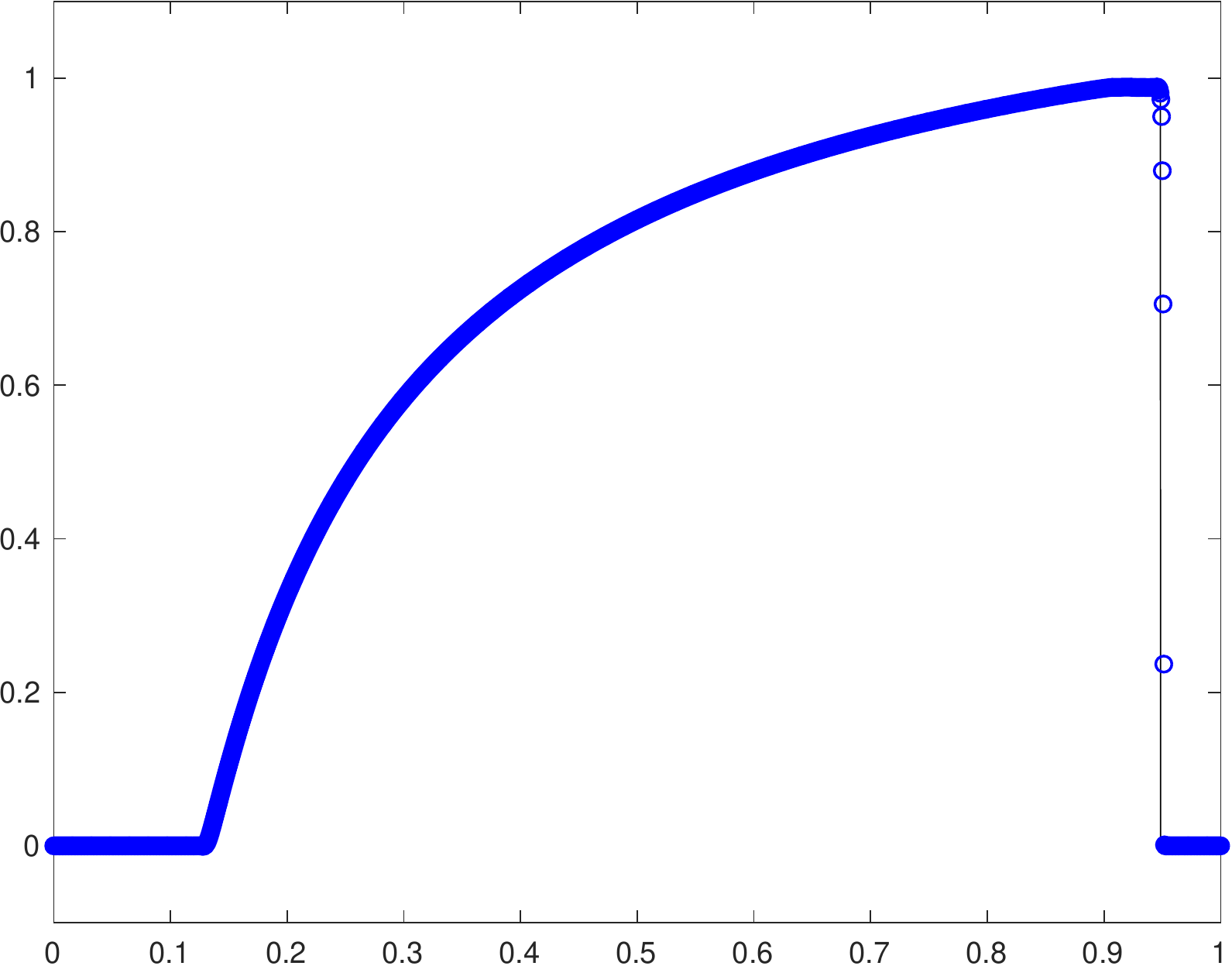}
		}
		\subfigure[$p$]{
			\includegraphics[width=0.48\textwidth]{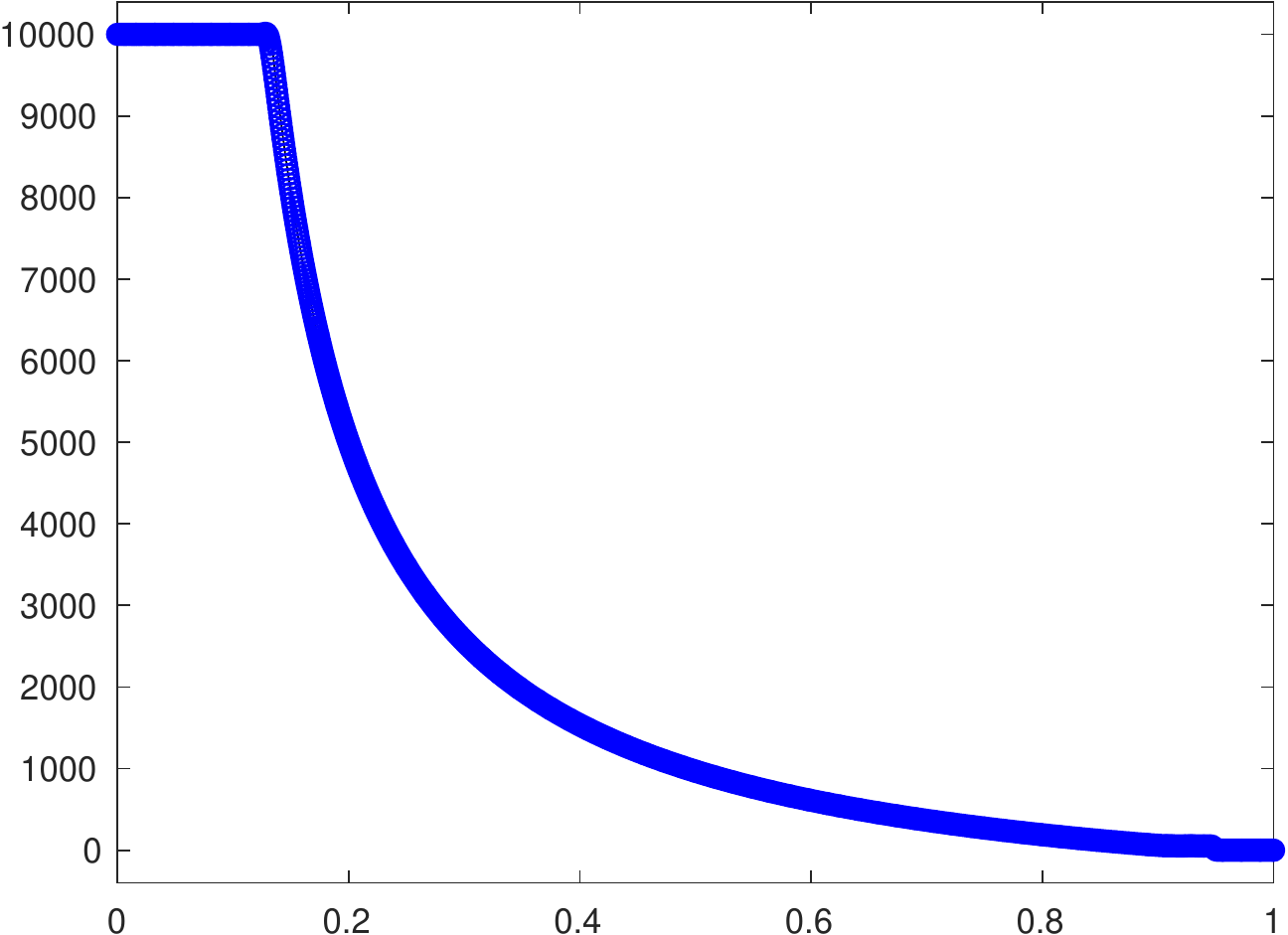}
		}
		\caption{Example \ref{1Driemann2}: The numerical (symbols ``$\circ$'') and the exact (solid lines) solutions of density $\rho$ and its close-up, velocity $v_1$, and pressure $p$ along the line $y=0$ at $t=0.45$.  A triangular mesh with $h=1/1600$ is used.}
		\label{fig:1Driemann2}
	\end{figure}
\end{expl}

\begin{expl}[Quasi-1D Riemann problem \uppercase\expandafter{\romannumeral3}]	       \label{1Driemann3}\rm
	We take this and the next examples to 
illustrate the importance of using scaling-invariant nonlinear weights in 
WENO reconstruction (as discussed in Remark \ref{remark:weight}), and to 
confirm that our numerical method does inherit the 
homogeneity of the evolution operator (as discussed in Theorem \ref{thrm:invarnumeric} and Proposition \ref{prop:invar}).

	The initial data of this Riemann problem are taken as
	\begin{equation}\label{eq:1DR3}
		(\rho_0, {\bm v}_0,p_0)=
		\begin{cases}
			(10^2,0,0,10^4), &x<0.5,\\
			(10^2,0,0,10^2), &x>0.5,
		\end{cases}
	\end{equation}
	which has similar wave structures as those of Example \ref{1Driemann2}.   The computational domain $[0,1]\times[-1/320,1/320]$ is divided into triangular cells with $h=1/1600$ and the outflow boundary conditions.
	\begin{figure}[htbp]
		\centering
		\subfigure[Use our scaling-invariant nonlinear weights \eqref{OUR-weights} ]{
			\includegraphics[width=0.44\textwidth]{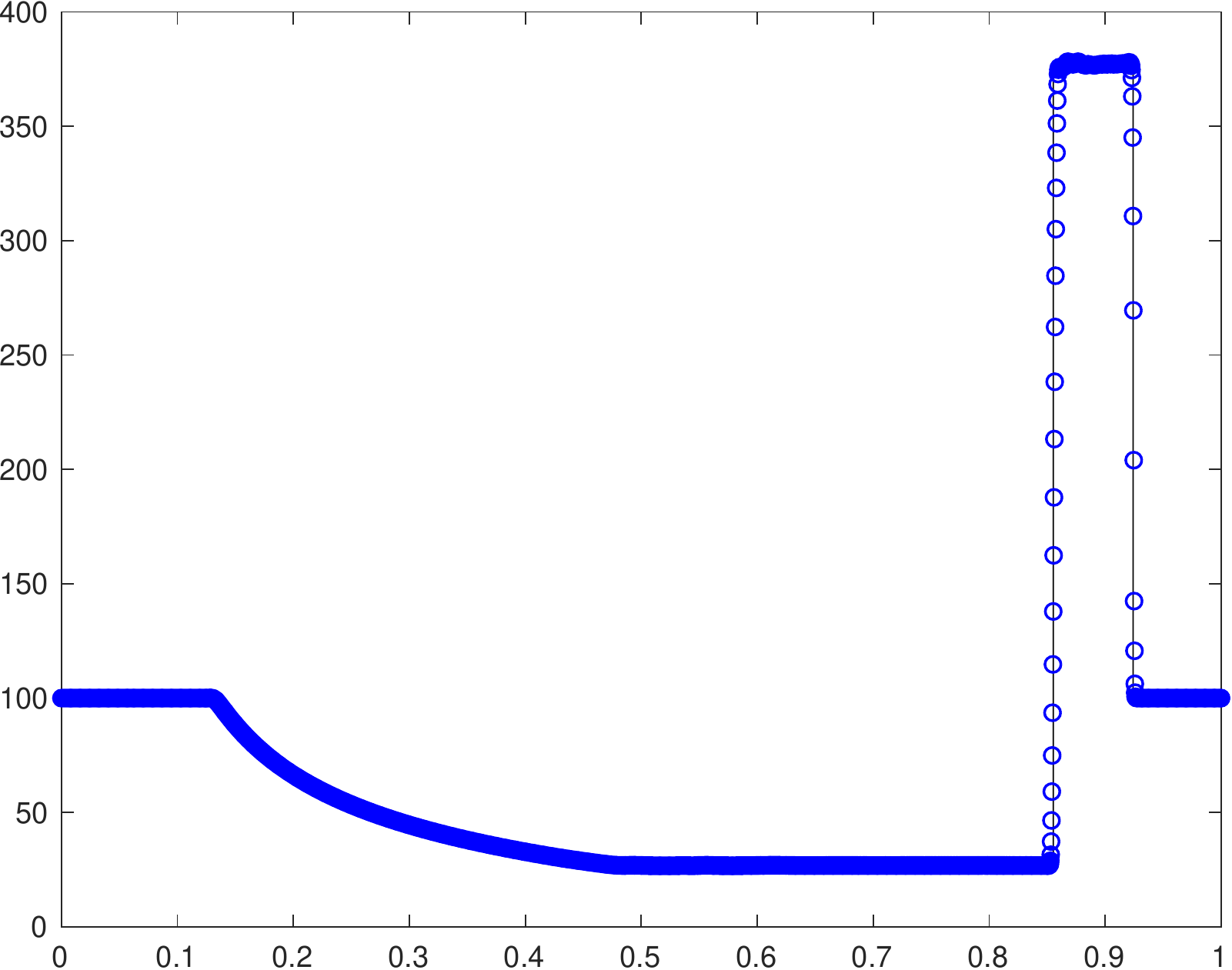}
		}
		\subfigure[Use the nonlinear weights \eqref{ZQweights} from \cite{zhu2018new}]{
			\includegraphics[width=0.44\textwidth]{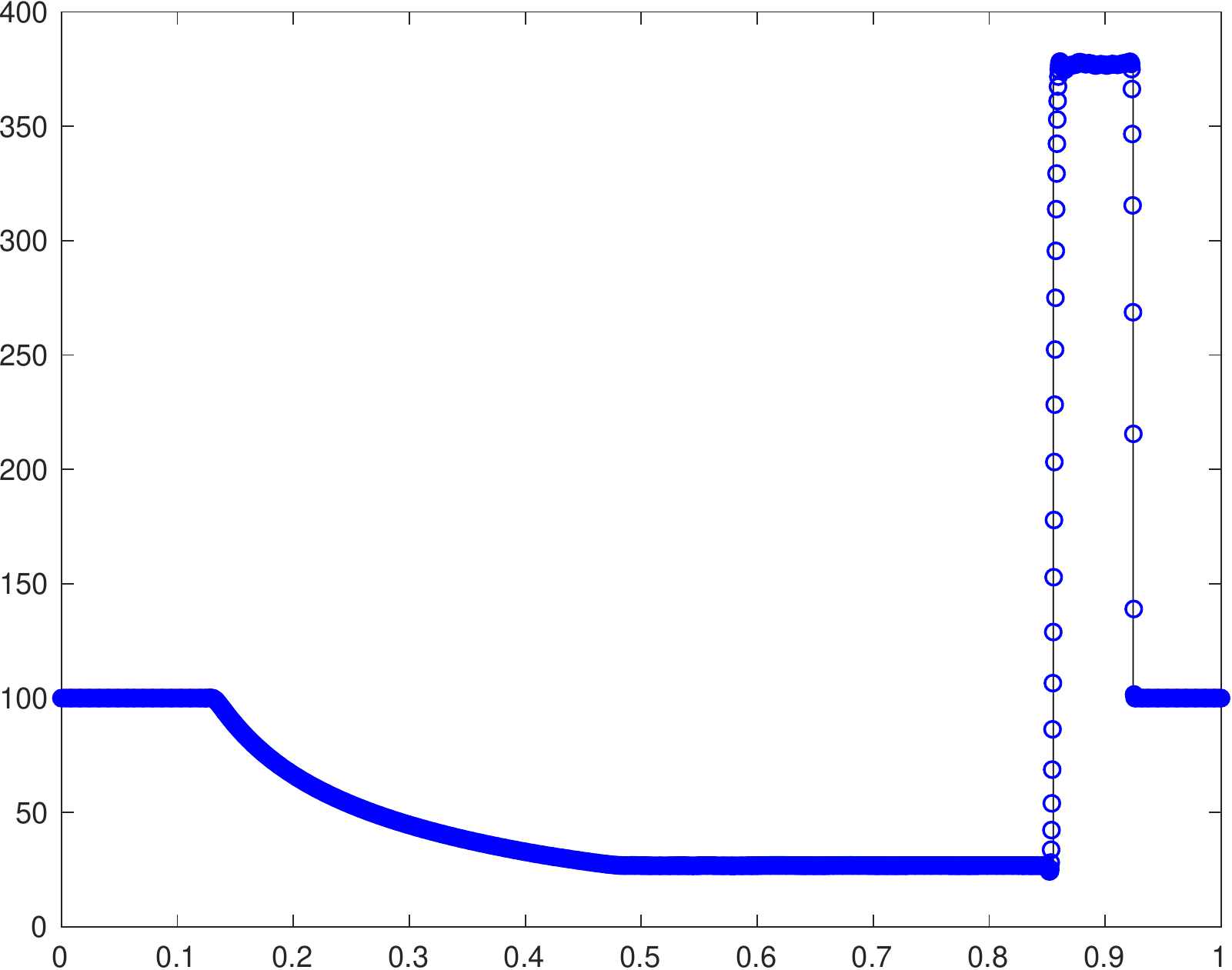}
		}
		\caption{Example \ref{1Driemann3}: The numerical solution (symbols ``$\circ$'') and exact solution (solid lines) of density along the line $y=0$ at $t=0.45$ obtained by using different nonlinear weights. A triangular mesh with $h=1/1600$ is used. 
		}
		\label{1Driemann3:fig1}
	\end{figure}
	Fig.~\ref{1Driemann3:fig1} shows the numerical solutions along the line $y=0$ obtained by our method using respectively 
	the scaling-invariant nonlinear weights \eqref{OUR-weights} and the non-scaling-invariant nonlinear weights \eqref{ZQweights} from \cite{zhu2018new}. We see that both weights deliver satisfactory results which match the exact solution well. 
	
	However, their performances are quite different if the initial data are scaled. 
	To confirm this, we 
	 scale the initial data \eqref{eq:1DR3} to be $(\zeta \rho_0, {\bm v}_0,\zeta p_0)$ with the constant $\zeta >0$,  
	then 
	 Proposition \ref{prop:invar} tells us that the exact density at time $t$ is equal to  $\zeta \rho(x,t)$. 
	Let $\rho_h(x,t)$ and $\rho^{(\zeta)}_h(x,t)$ denote the numerical solutions for 
	the unscaled and  scaled initial data, respectively. 
	We hope the numerical solutions also satisfy the homogeneity, namely, $\rho^{(\zeta)}_h(x,t) = \zeta \rho_h(x,t)$ up to round-off error.  
	We run the code with the scaling number $\zeta=10^{-2}$, 
	by using respectively 
	the scaling-invariant weights \eqref{OUR-weights} and the non-scaling-invariant weights \eqref{ZQweights}. The results are presented and compared in Fig.~\ref{1Driemann3:fig2}. 
	One can see that the numerical result obtained by using 
	the scaling-invariant weights \eqref{OUR-weights} is non-oscillatory and 
	  preserves the homogeneity up to $10^{-12}$. 
	  However, the numerical solution with the non-scaling-invariant weights \eqref{ZQweights}
	has obvious overshoots/undershoots, and the homogeneity is also not satisfied. 
	These observations are further validated by
	the simulation results for $\zeta=10^{-4}$, as shown in Fig.~\ref{1Driemann3:fig3}.

	\begin{figure}[htbp]
		\centering
		\subfigure[$\rho^{(\zeta)}_h$ with scaling-invariant weights \eqref{OUR-weights}]{
			\includegraphics[width=0.44\textwidth]{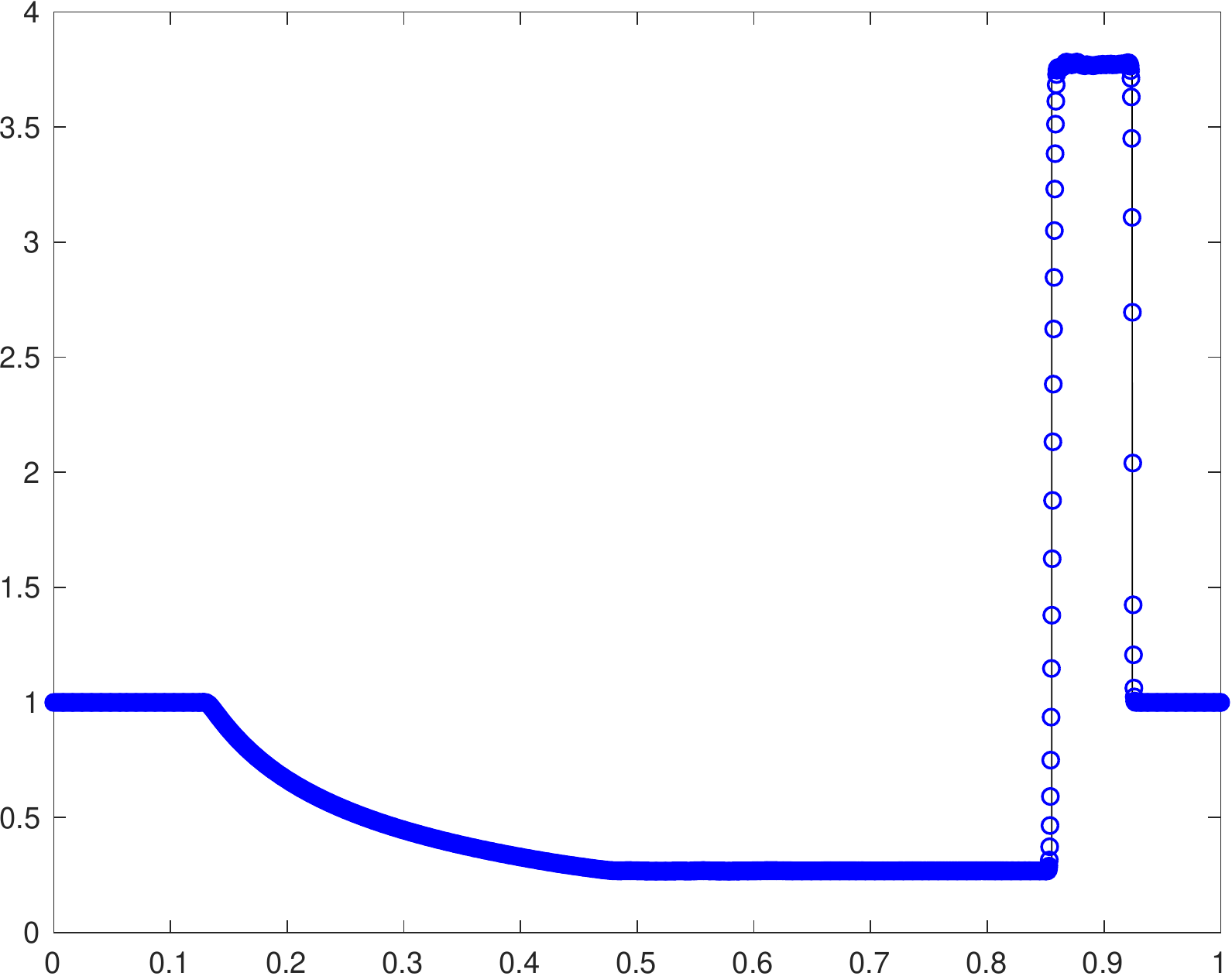}
		}
		\subfigure[$\rho^{(\zeta)}_h - \zeta \rho_h$ with scaling-invariant weights \eqref{OUR-weights}]{
			\includegraphics[width=0.44\textwidth]{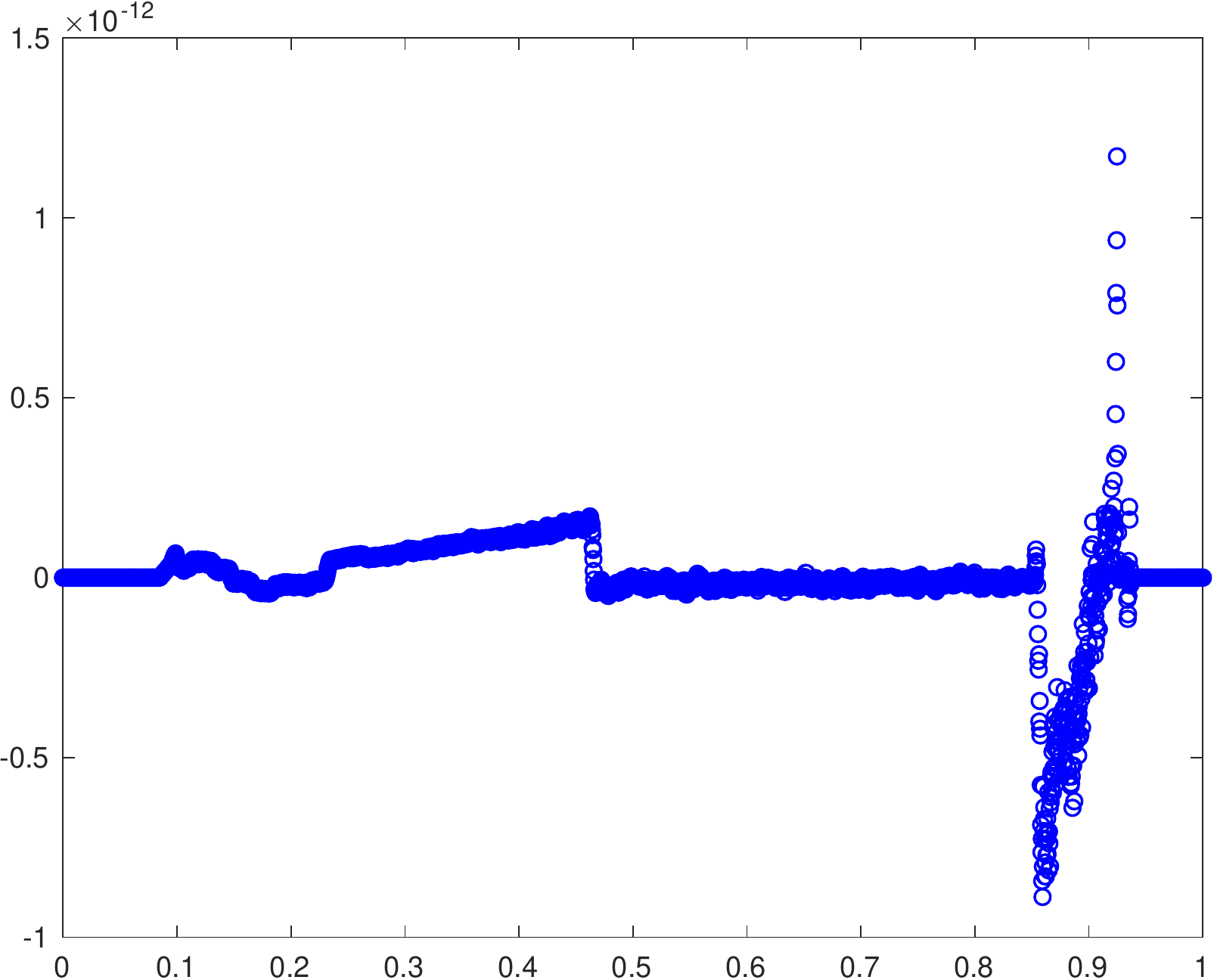}
		}
		\subfigure[$\rho^{(\zeta)}_h$ with weights \eqref{ZQweights} from \cite{zhu2018new}]{
			\includegraphics[width=0.44\textwidth]{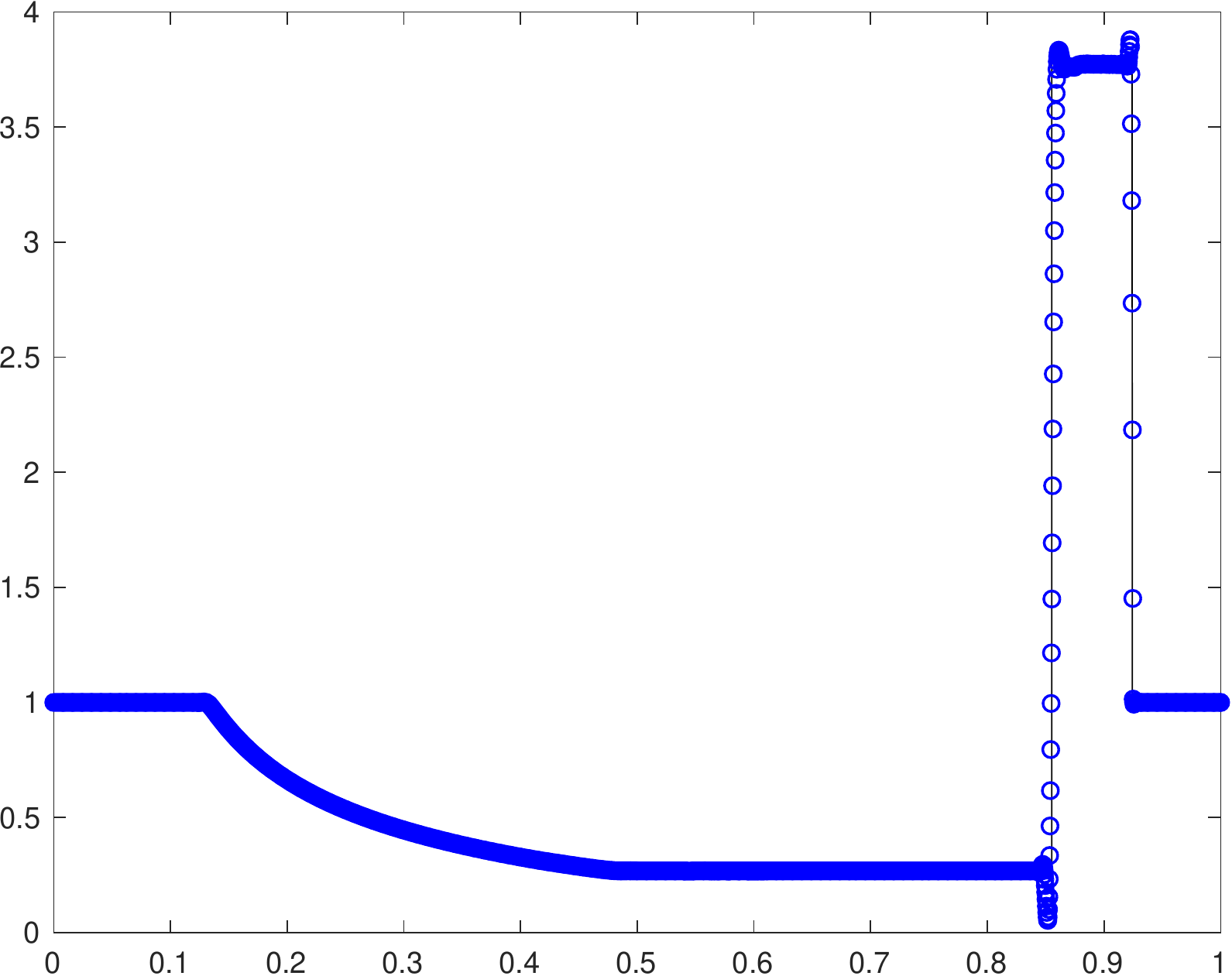}
			\label{scalingRHO}
		}
		\subfigure[$\rho^{(\zeta)}_h - \zeta \rho_h$ with weights \eqref{ZQweights} from \cite{zhu2018new}]{
			\includegraphics[width=0.44\textwidth]{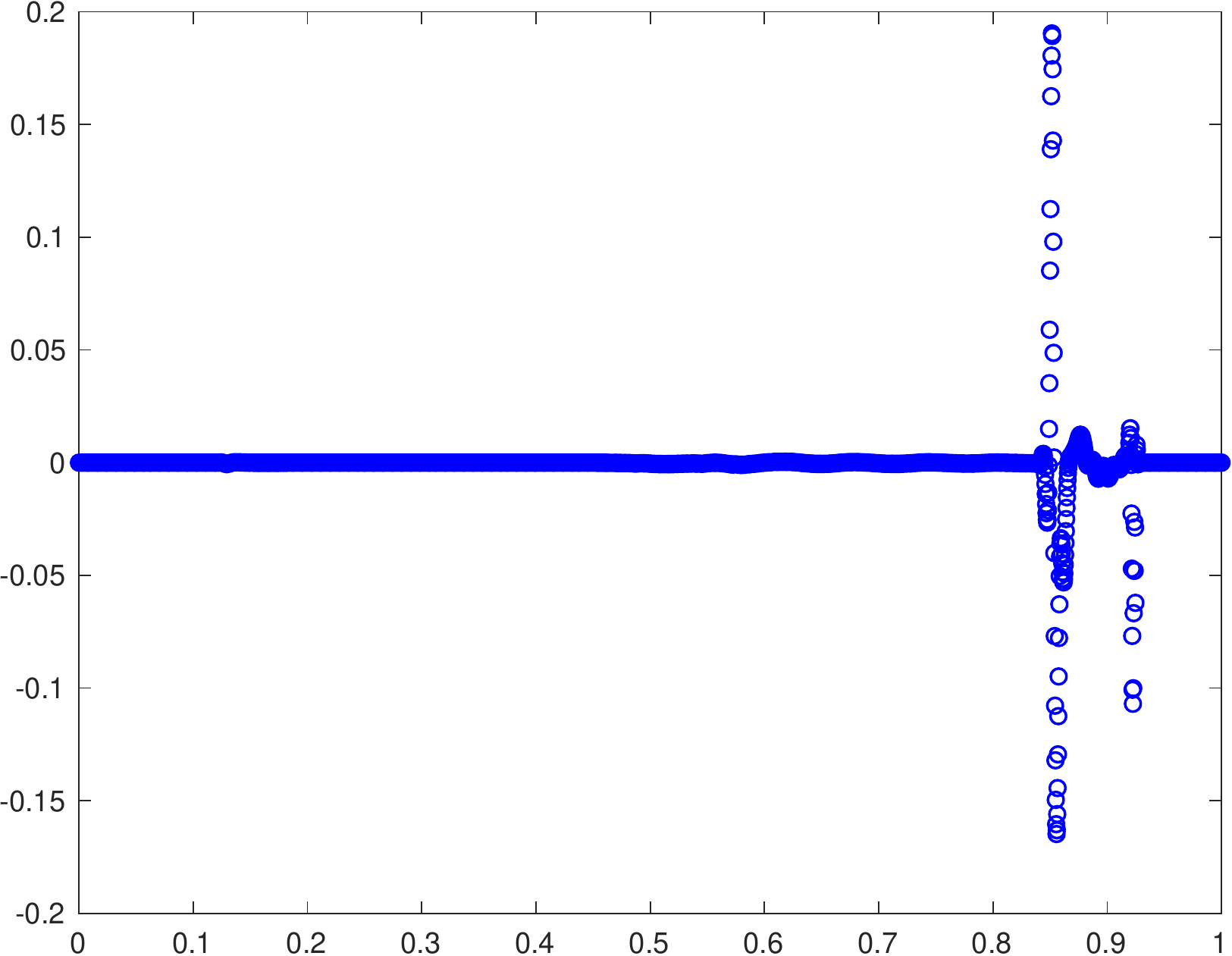}
		}
		\caption{Example \ref{1Driemann3}: The numerical solutions along along the line $y=0$ at $t=0.45$ for the scaled initial data with $\zeta=10^{-2}$. The solid lines on the left figures denote the exact solution.}
		\label{1Driemann3:fig2}
	\end{figure}
	
	\begin{figure}[htbp]
		\centering
		\subfigure[$\rho^{(\zeta)}_h$ with scaling-invariant weights \eqref{OUR-weights} ]{
			\includegraphics[width=0.45\textwidth]{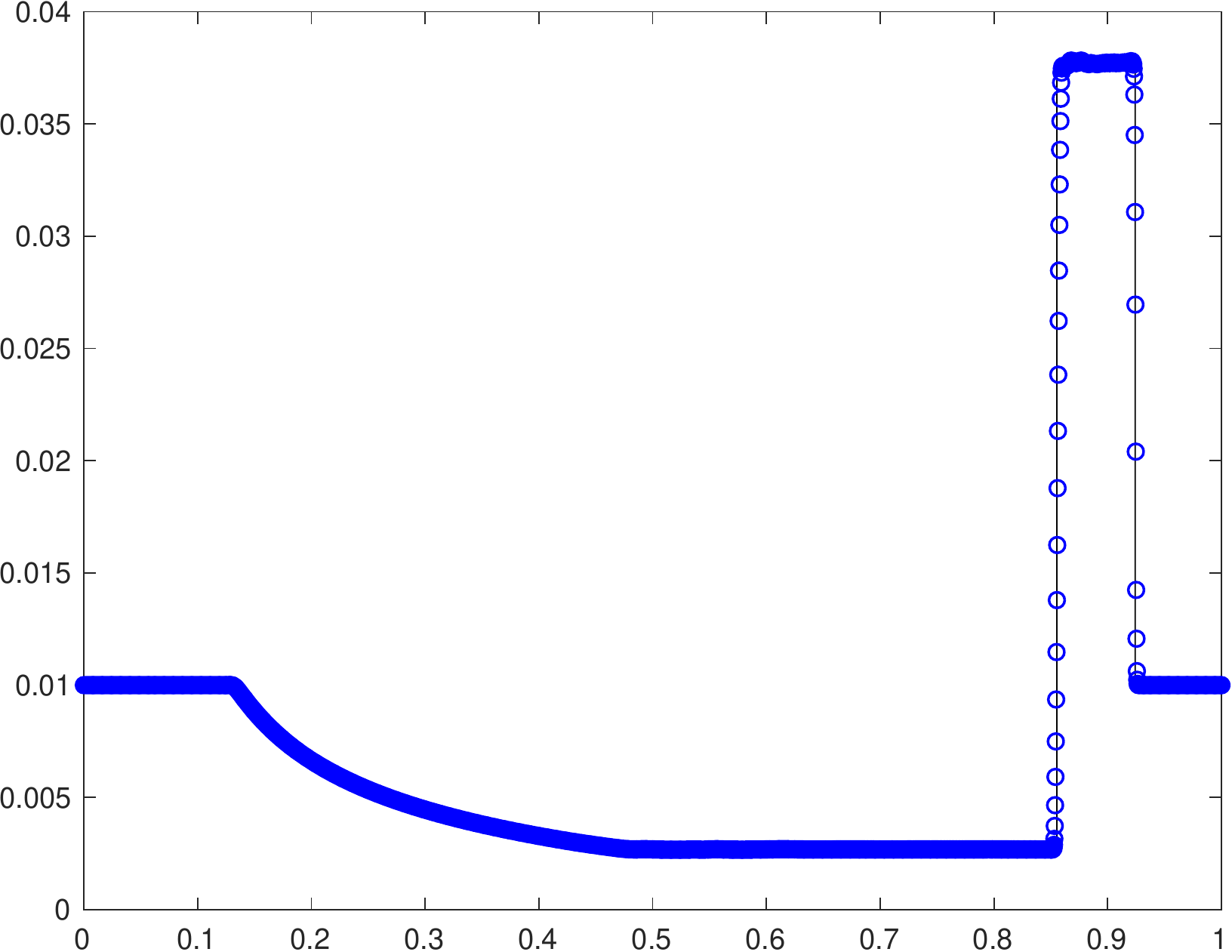}
		}
		\subfigure[$\rho^{(\zeta)}_h - \zeta \rho_h$ with scaling-invariant weights \eqref{OUR-weights}]{
			\includegraphics[width=0.45\textwidth]{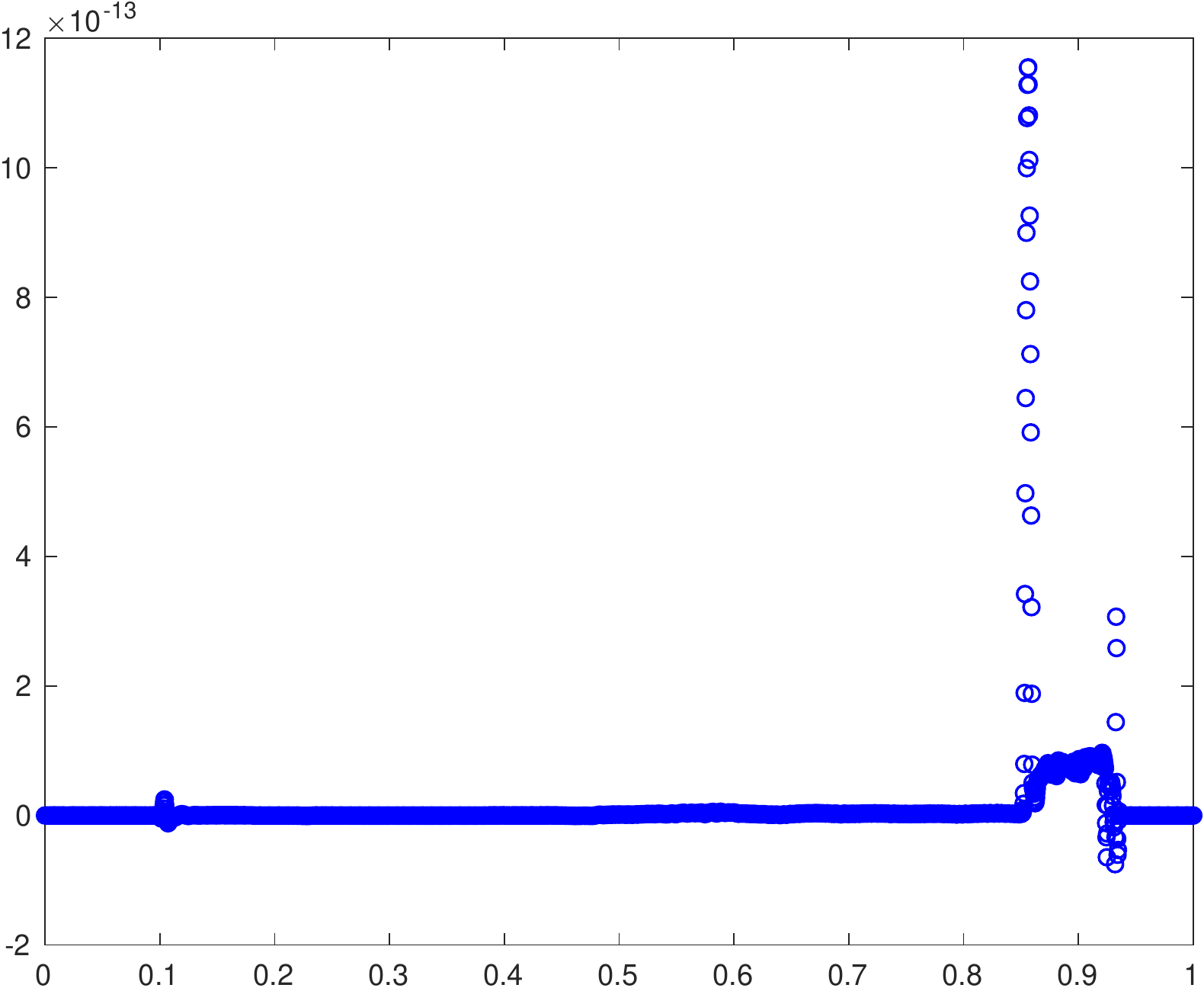}
		}
		\subfigure[$\rho^{(\zeta)}_h$ with weights \eqref{ZQweights}]{
			\includegraphics[width=0.45\textwidth]{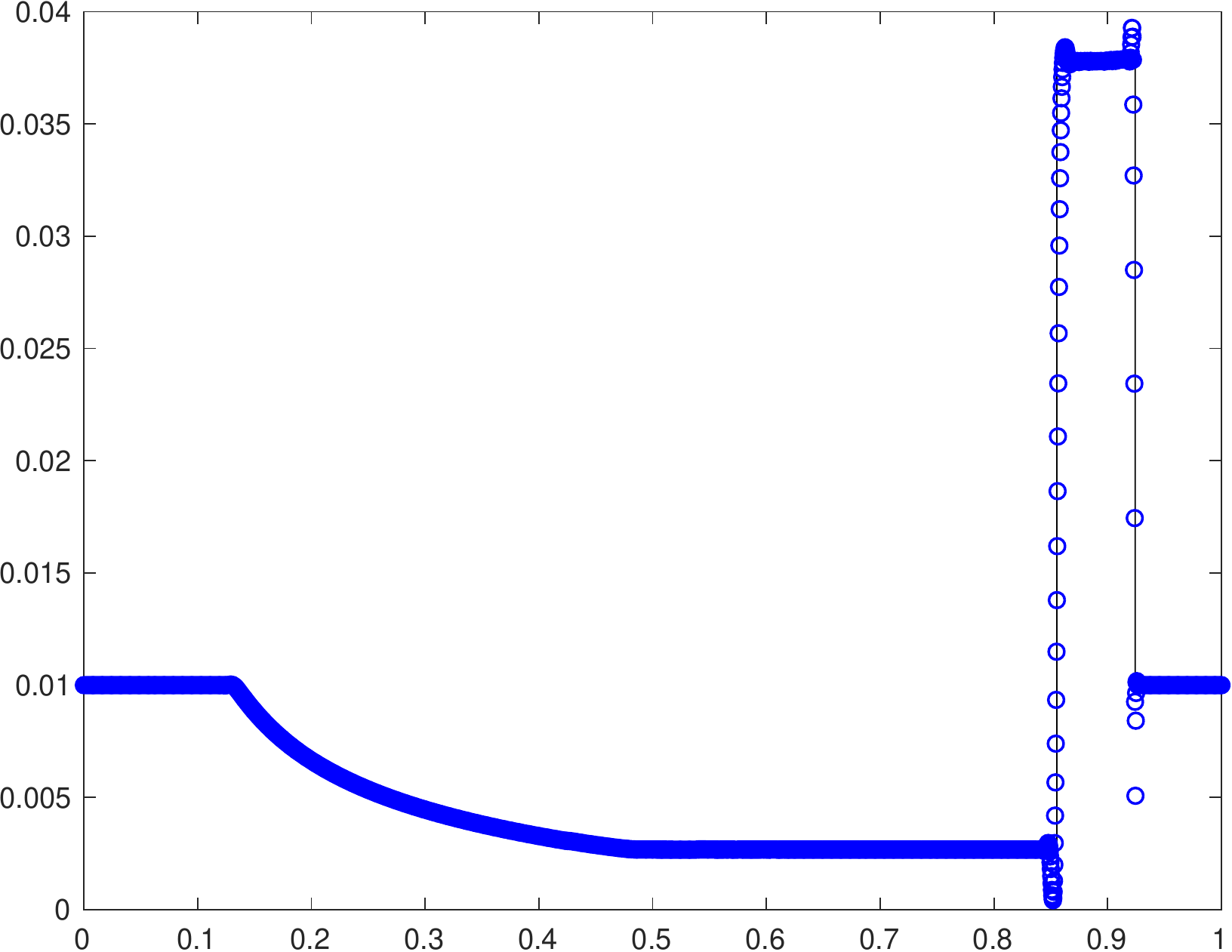}
			\label{scalingRHO2}
		}
		\subfigure[$\rho^{(\zeta)}_h - \zeta \rho_h$ with weights \eqref{ZQweights}]{
			\includegraphics[width=0.45\textwidth]{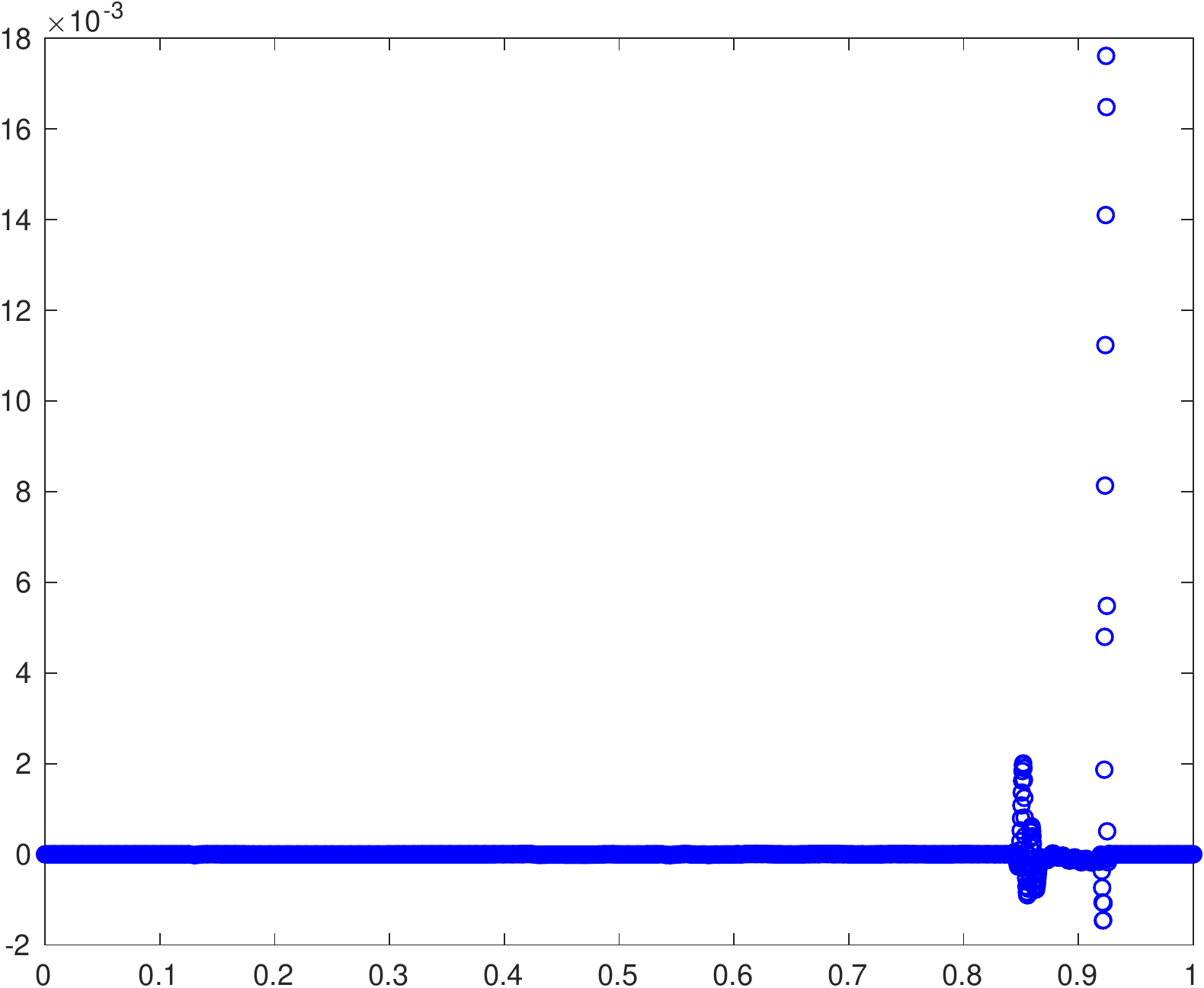}
			\label{scalingV2}
		}
		\caption{Example \ref{1Driemann3}: Numerical results along the line $y=0$ at $t=0.45$ for the scaled initial data with $\zeta=10^{-4}$. The solid lines on the left figures denote the exact solution.}
		\label{1Driemann3:fig3}
	\end{figure}

\end{expl}

\begin{expl}[Quasi-1D multi-scale problem]	       \label{1DMSP}\rm 
		To further demonstrate the importance and advantages of using our scaling-invariant weights \eqref{OUR-weights}, we simulate a problem involves the interaction of multi-scale waves. 
		The initial data are taken as
		\begin{equation}\label{eq:1DR3-2}
			(\rho_0, {\bm v}_0,p_0)=
			\begin{cases}
				(100,0,0,10^4), &0<x<0.5,\\
				(100,0,0,100), &0.5<x<1,\\
				(1,0,0,100), &1<x<1.5,\\
				(1,0,0,1), &1.5<x<2,
			\end{cases}
		\end{equation}
		which  combine the initial solution in \eqref{eq:1DR3} and its scaled case with $\zeta=10^{-2}$ in the computational domain $[0,2]\times[-1/320,1/320]$. We use a triangular mesh with $h=1/1600$ and the outflow boundary conditions. Fig.~\ref{twowave} shows the density in the logarithmic scale along the line $y=0$  
		 at $t=0.45$ obtained by using respectively 
		 the scaling-invariant nonlinear weights \eqref{OUR-weights} and the non-scaling-invariant nonlinear weights \eqref{ZQweights}.  One can see that the numerical solution with the non-scaling-invariant weights  \eqref{ZQweights} has obvious undershoots in resolving the stationary contact discontinuity at $x=1$ and right-moving contact discontinuity near $x \approx 1.85$ for this multi-scale problem . 
		 Whereas, the numerical solution with the proposed scaling-invariant weights \eqref{OUR-weights} does not suffer from this issue. 
		 This further indicates that the scaling-invariant weights \eqref{OUR-weights} 
		 may be more robust and advantageous in capturing wave structures with different scales.

	\begin{figure}[htbp]
	\centering
	\subfigure[Computed with our scaling-invariant weights \eqref{OUR-weights} ]{
		\includegraphics[width=0.9\textwidth]{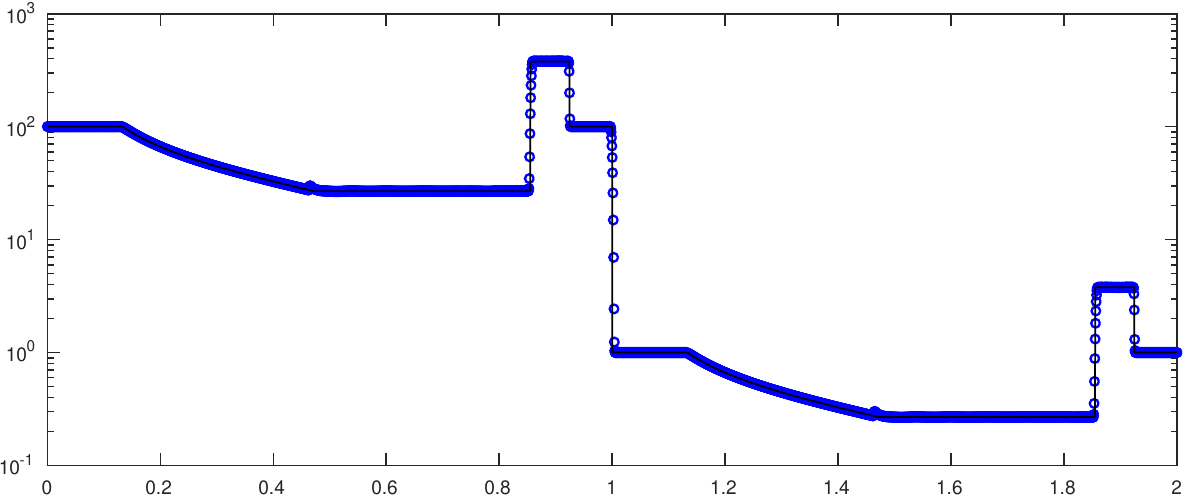} 
		\label{twowave_new}
	}
	\subfigure[Computed with the nonlinear weights \eqref{ZQweights}]{
		\includegraphics[width=0.9\textwidth]{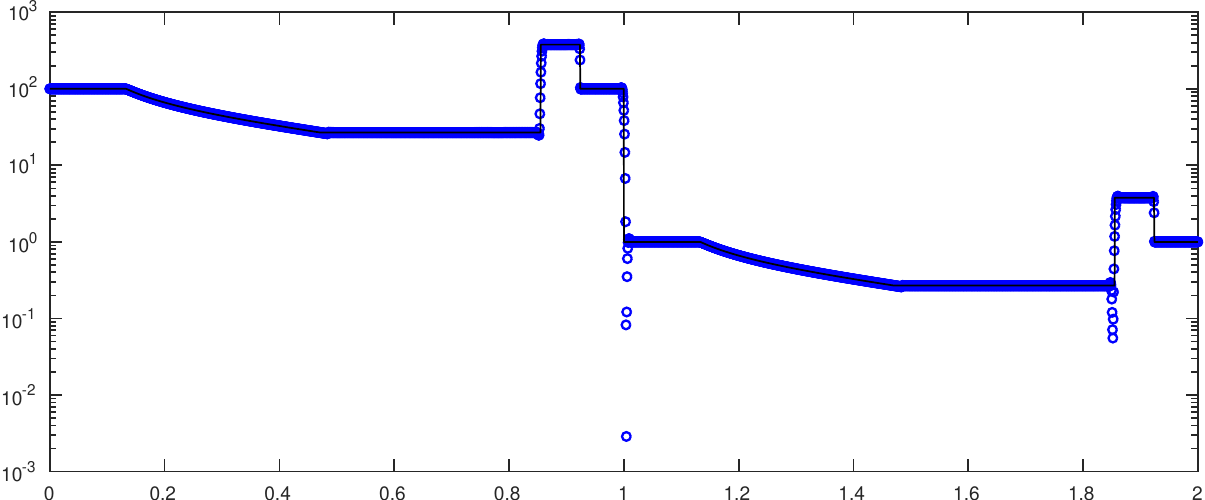} 
		\label{twowave_old}
	}
	\caption{Example \ref{1Driemann3}: The numerical solution (symbols ``$\circ$'') and exact solution (solid lines) of density plotted in logarithmic scale along the line $y=0$ at $t=0.45$ obtained by using two different nonlinear weights in the WENO method. A triangular mesh with $h=1/1600$ is used. }
	\label{twowave}
\end{figure}

\end{expl}

\begin{expl}[2D Riemann problem \uppercase\expandafter{\romannumeral1}]\label{2Driemann1}\rm
 Both this and the next examples   
simulate  2D Riemann problems of the ideal relativistic fluid within the domain $[0,1]^2$, all on the same unstructured triangular mesh with $h=1/400$. 
\figref{fig:2Driemann1}\subref{fig:2Driemann1b}  shows a sample mesh (with $h=1/20$) which is coarser than the computational mesh. 

The initial condition of this example are taken as
	\begin{equation*}
		(\rho_0, {\bm v}_0,p_0)=
		\begin{cases}
			(0.1,0,0,0.01), &x>0.5,y>0.5,\\
			(0.1,0.99,0,1), &x<0.5,y>0.5,\\
			(0.5,0,0,1),      &x<0.5,y<0.5,\\
			(0.1,0,0.99,1), &x>0.5,y<0.5.
		\end{cases}
	\end{equation*}
	  \figref{fig:2Driemann1}\subref{fig:2Driemann1a} gives the contours of the density logarithm $\ln \rho$ at $t=0.4$. Due to the interaction of the initial four discontinuities, two reflected curved shock waves and a complex mushroom structure are formed. The result is in agreement with that in \cite{WuTang2015}. We observe that, at the beginning of the simulation, the number of PCP limited cells is about 100 which is only  $\sim 0.3\%$ of the total cell numbers. As the time increases, the number finally drops to $3$. However, if the PCP limiter is not used, our code for this example would blow up  due to the nonphysically numerical solutions. 
	\begin{figure}[htbp]
		\centering
		\subfigure[A sample mesh with $h=1/20$]
		{
			\includegraphics[width=0.45\textwidth]{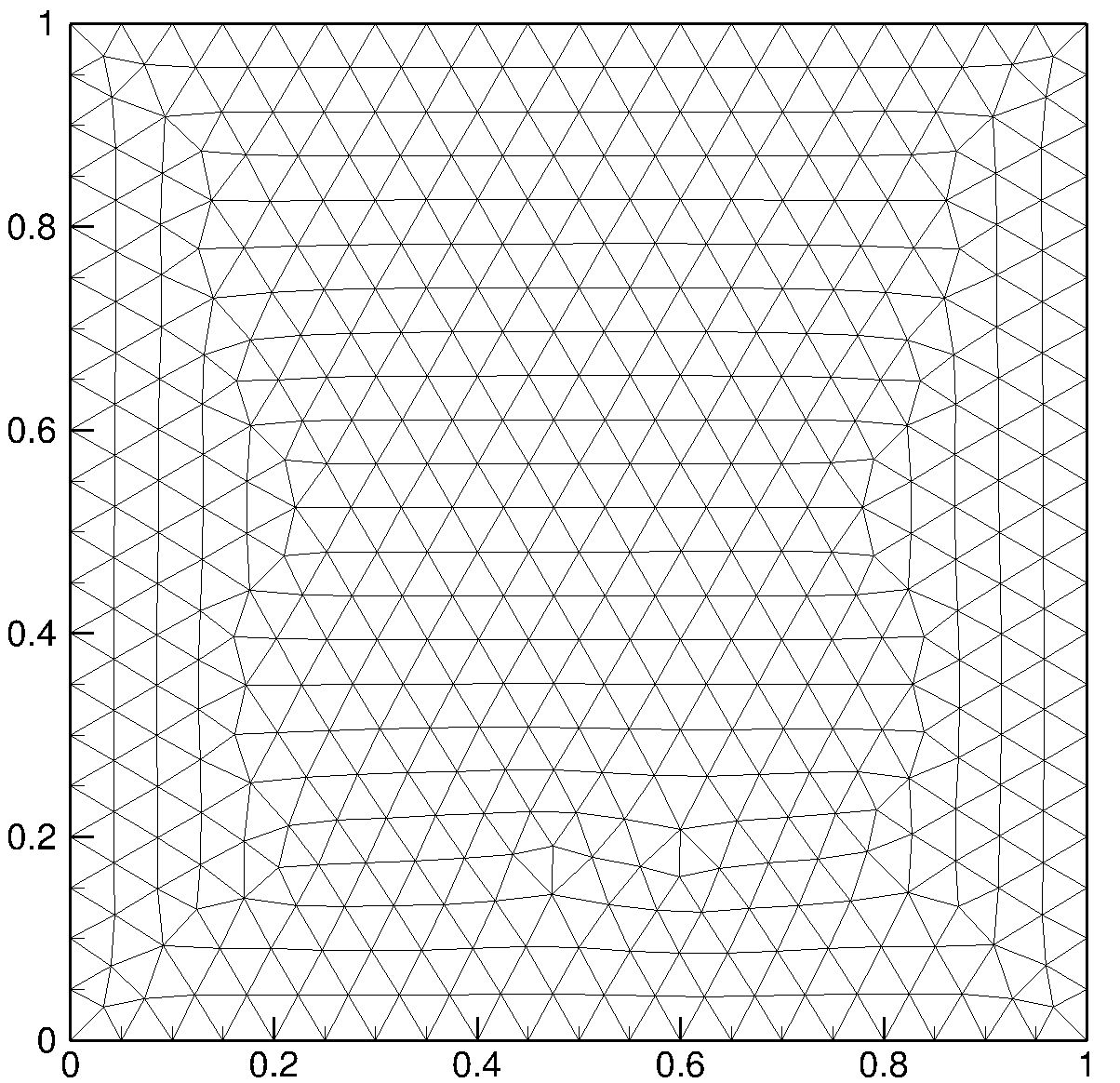}
			\label{fig:2Driemann1b}
		}
		\subfigure[$\ln \rho $]{
			\includegraphics[width=0.45\textwidth]{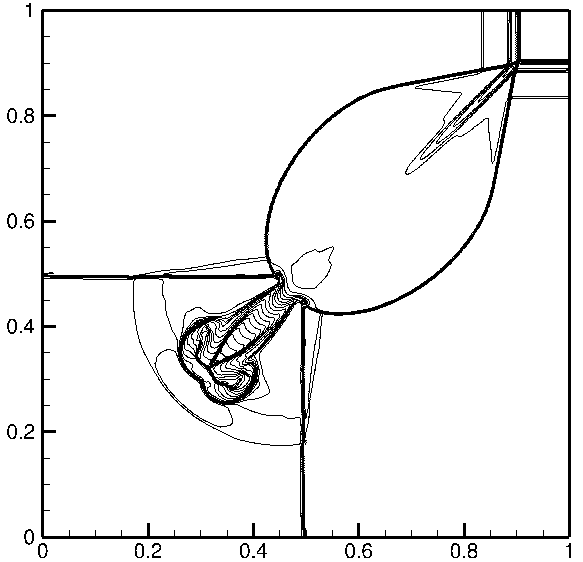}
			\label{fig:2Driemann1a}
			}
		\caption{Example \ref{2Driemann1}: The density logarithm $\ln\rho$ with 25 equally spaced contour lines from $-6$ to $1.87$ at $t=0.4$.  A triangular mesh with $h=1/400$ is used. }
		\label{fig:2Driemann1}
	\end{figure}
\end{expl}

\begin{expl}[2D Riemann problem \uppercase\expandafter{\romannumeral2}]\label{2Driemann1b}\rm
This is a more ultra-relativistic 2D Riemann problem 
first proposed in \cite{WuTang2015}, with the initial data 
	\begin{equation}
		(\rho_0, {\bm v}_0,p_0)=
		\begin{cases}
			(0.1,0,0,20), &x>0.5,y>0.5,\\
			(0.00414329639576, 0.9946418833556542, 0, 0.05), &x<0.5,y>0.5,\\
			(0.01,0,0,0.05),      &x<0.5,y<0.5,\\
			(0.00414329639576, 0, 0.9946418833556542, 0.05), &x>0.5,y<0.5.
		\end{cases}
	\end{equation}
  In comparison with the 2D Riemann problem \uppercase\expandafter{\romannumeral1}, the fluid velocity here is closer to the speed of light.  We use the  same computational domain and mesh as in the 2D Riemann problem \uppercase\expandafter{\romannumeral1}. We find that for this challenging example, it is also necessary to 
  employ the PCP limiter to enforce the numerical solutions in the set $\mathbb{G}_h^k$, otherwise 
  the physical constraints \eqref{Gw} would be violated and the simulation immediately breaks down in the first time step. 
  From the 25 equally spaced contour lines of density logarithm $\ln \rho$ at $t=0.4$ shown in  \figref{fig:2Driemann12}, it can be seen that the initial four discontinuities interact with each other near the central point $(0.5,0.5)$ and form a complex mushroom structure moving to the left-bottom region. Besides, two shock waves (right and top) move at a speed of $0.66525606186639$, and two contact discontinuities (bottom and left) are stationary.
 
	\begin{figure}[htbp]
		\centering
		\includegraphics[width=0.5\textwidth]{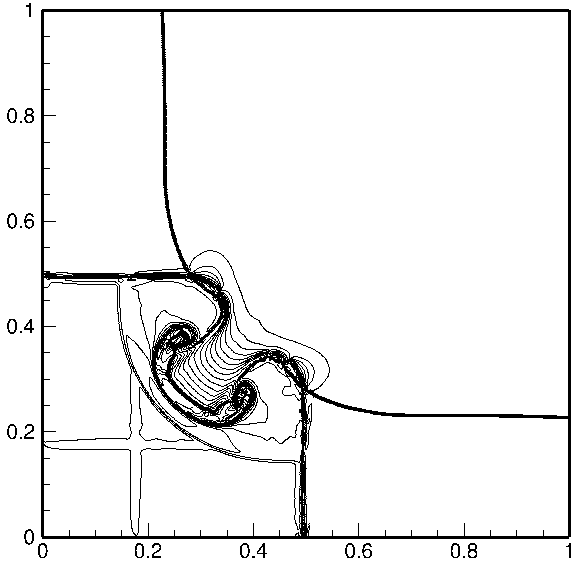}
		\caption{Example \ref{2Driemann1b}: The density logarithm $\ln\rho$ with 25 equally spaced contour lines from $-9.97$ to 2.56 at $t=0.4$. A triangular mesh with $h=1/400$ is used.}
		\label{fig:2Driemann12}
	\end{figure}
\end{expl}
	
	\begin{remark}\label{Rem:Compare} 
		We take several 1D and 2D examples to compare the efficiency of our proposed three convergence-guaranteed iterative algorithms 
		and Newton's algorithm for recovering the primitive variables from admissible conservative variables. For fairness, the error tolerance $\varepsilon_{tol}$ is set as $10^{-15}$ for all these algorithms, and 
		all our experiments are performed with one core on the same Linux environment of server with Intel(R) Core(TM) i7-8700K CPU @ 3.70GHz. 
		For Newton's algorithm to recover the pressure, we take the pressure at the last time step on each cell as the initial guess in the present time step. When negative pressure is produced in 
		Newton's iteration, we restart the iteration with zero as the initial guess, which works for all the tested examples. 
		Table \ref{tablespeed} shows the computational time spent on running these algorithms 
		 (numerator), the total CPU time for the whole simulation (denominator), and the corresponding percentage (quotient) for Examples 4.2, 4.3, 4.6, and 4.7. 
		 It can be seen that our hybrid iteration algorithm is the most efficient one among these four algorithms. 
		  We also observe that our three convergence-guaranteed iterative algorithms never fail and always safely recover the primitive variables in the physical region $G_w$, which is consistent with our theoretical analysis.
	
	\begin{table}[htbp]
		\centering
		\setlength{\abovecaptionskip}{0.cm}
		\setlength{\belowcaptionskip}{-0.cm}
		\caption{The percentage (quotient) of the CPU time (only one core is used) spent on
			recovering primitive variables (numerator) compared to the whole simulation time (denominator).}\label{tablespeed}
		\begin{center}
			\resizebox{1.02\columnwidth}{!}{
				\begin{tabular}{c|c|c|c|c}					
					\toprule
					Examples \& mesh sizes &  Bisection& Fixed-point& Hybrid  &  Newton \\
					\midrule
					Example 4.2 ($h=1/{500}$)& 17.1\% = $\frac{2m55s }{17m2s }$&   14.0\%=$\frac{2m19s}{16m32s}$ & {\bf 13.9\%} =$\frac{2m17s}{16m24s}$ &18.7\%=$\frac{3m15s}{17m23s}$\\
					\midrule
					Example 4.3 ($h={1}/{1600}$)&15.4\% = $\frac{34m23s}{3h42m46s}$ &15.2\%=$\frac{33m43s}{3h41m59s}$ &{\bf 11.1\%}=$\frac{23m11s}{3h29m53s}$&
					23.0\%=$\frac{56m30s}{4h5m26s}$ \\
					\midrule
					Example 4.6 ($h=1/400$)&14.6\% = $\frac{1h19m15s}{9h3m34s}$&27.4\%=$\frac{2h59m6s}{10h53m42s}$& {\bf 10.7\%}=$\frac{55m22s}{8h35m27s}$& 15.4\% =$\frac{1h24m36s}{9h9m55s}$\\
					\midrule
					Example 4.7 ($h=1/400$)&15.4\%=$\frac{1h24m19s}{9h6m51s}$ &19.2\%=$\frac{1h51m12s}{9h39m20s}$ & {\bf 8.7\%}=$\frac{41m46s}{7h59m45s}$ &
					18.2\%=$\frac{1h43m24s}{9h29m46s}$\\
					\bottomrule
				\end{tabular}
			}
		\end{center}
	\end{table}
    \end{remark}

\begin{expl}[Double Mach reflection]\label{doublemach}\rm
	This test problem was firstly proposed by Woodward and Colella \cite{woodward1984numerical} in the non-relativistic case, and later extended to the special RHD in \cite{2006Zhang}. It was originally used to study the reflections of planar shocks in the air from wedges which is experimentally set up by driving a shock down a tube that contains a wedge \cite{woodward1984numerical}.

	To facilitate the setting of boundary conditions on the structured mesh of numerical simulations, an equivalent rotation problem is usually solved, which will make the wall horizontal and the shock wave enters it at an angle of $60^{\circ}$. Since unstructured meshes can easily handle complex domains, here we directly solve the original problem without rotation.  \figref{fig:doublemach}\subref{fig:doublemachmesh} illustrates the computational domain with a sample mesh ($h=1/10$). 
	Our computational mesh with $h=1/240$ is much finer than the sample mesh.
	The ratio of specific heats in the equation of state will be taken as $\Gamma=1.4$ in this example. 
	
	Initially at $x=0$, there is a shock wave moving horizontally to the right with an initial speed of $0.4984$, and the primitive variables $\vec{W}=(\rho_0, {\bm v}_0,p_0)^\top$ on the left and right side of the shock are given by
	\begin{align*}
		&\vec{W}_L =( 8.564,0.4247,-0.4247,0.3808)^\top,\\
		&\vec {W}_R= (1.4,0,0,0.0025)^\top.
	\end{align*}
   The states at 
   both of the bottom boundary $y=0$ and the left boundary $x=-0.1$ are set as the left shock state $\vec{W}_L$, while for the right boundary $x=2.7$, the right shock state $\vec{W}_R$ is specified.  The states on the top boundary are given according to the location of the moving shock. At the wall, the reflective boundary condition is specified. The numerical results at $t = 4$, obtained by our scheme, are shown in \figref{fig:doublemach}\subref{fig:doublemachrho}.
 It can be seen that our scheme can clearly capture the correct flow patterns, including two Mach stems, two contact waves, reflected shock, and jet formed near the wall around the double Mach region. 
 In the first three time steps, there are about one or two cells where the PCP limiter must be applied to preserve the numerical solutions in the set $\mathbb{G}^k_h$.

	\begin{figure}[htbp]
		\centering
		\subfigure[A sample mesh with $h=1/10$]{
			\label{fig:doublemachmesh}
			\includegraphics[width=0.48\textwidth]{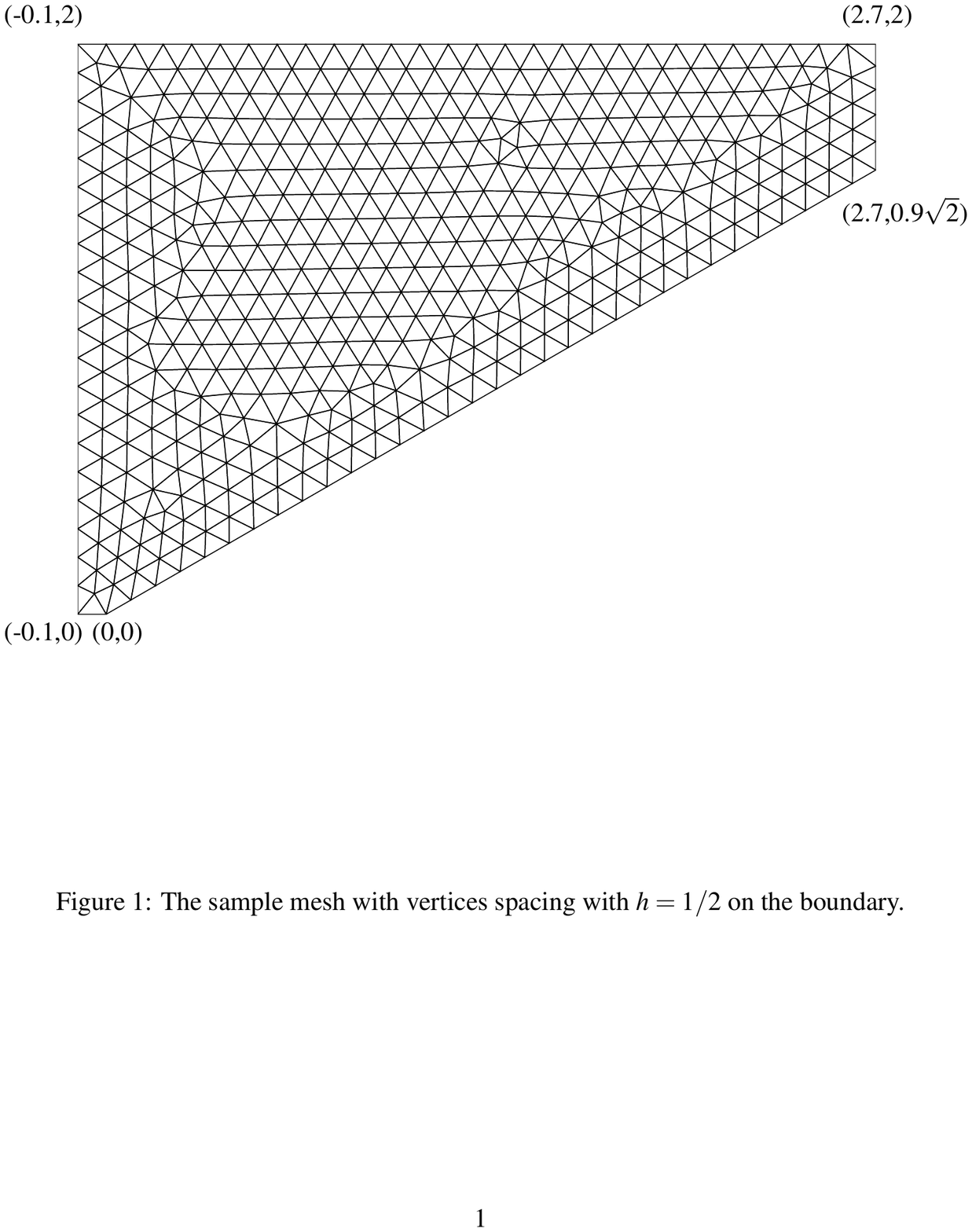}
		} 
		\subfigure[$ \rho $]{
			\label{fig:doublemachrho}
			\includegraphics[width=0.48\textwidth]{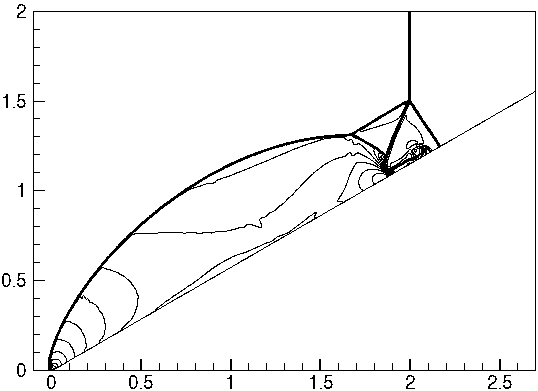}
		}
		\caption{Example \ref{doublemach}: The density contour map with 30 equidistant contour lines. 
		A triangular mesh with $h=1/240$ is used.}
		\label{fig:doublemach}
	\end{figure}
\end{expl}

\begin{expl}[Relativistic forward-facing step problems]\label{forwardstep}\rm
	This example simulates the flow in a wind tunnel over a forward-facing step. The problem has been studied in both classic fluid dynamics \cite{2008chen} and relativistic fluid dynamics \cite{2006Zhang,Lucas2004}.
	The computational domain $[0,3]\times[0,1]$ is shown in  \figref{fig:forwardstep}\subref{fig:forwardstepmesh} with a sample mesh ($h=1/10$). The domain has a step of height $0.2$ at a distance of $0.6$ from the left boundary. 
	In our computations, we divide the domain 
	 into triangular cells, with $h=1/160$ and 149436 total cells.
	Initially the domain is filled with a right-moving fluid with $\Gamma=1.4$, which everywhere has a rest-mass density of $\rho = 1.4$ and a Newtonian Mach number of $3.0$. Along the walls of tunnel and step, the reflective boundary conditions are applied. 
	The boundary condition at the right is outflow, and at the left is inflow. 
		
	We simulate this problem with three different configurations with 
		different initial velocities $v_1 = 0.9$, $0.99$, and $0.999$, respectively. 
		The end times of simulations for these three configurations are 
		 $t=6,$ $4.45$, and $4$, respectively. 
		 Figs.~\ref{fig:forwardstep}\subref{fig:v9}, \ref{fig:forwardstep}\subref{fig:v99}, and \ref{fig:forwardstep}\subref{fig:v999} show the snapshots for each configuration at the final time obtained by our PCP finite volume scheme. 
		 The flow structures, including the shock reflection patterns, are very similar for the three setups. 
		 We observe that, when hitting the step, the fluid is reflected by the step to form a bow-shaped shock wave. Afterwards, the bow shock wave collides with the top boundary. 
	The results show that the bow shock moves faster as the inflow velocity $v_1$ is set larger. Besides, our method 
	captures all 
	 the wave structures correctly and robustly,  without any special artificial entropy fix near the step corner. 
 	It is noticed that the PCP limiter is necessary for all three configurations to keep the numerical solution in the set $\mathbb{G}_h^k$, otherwise the code would blow up quickly. 
 
	\begin{figure}[htbp]
		\centering
		\subfigure[A sample mesh with $h=1/10$]{  
			\label{fig:forwardstepmesh}
			\includegraphics[width=0.49\textwidth]{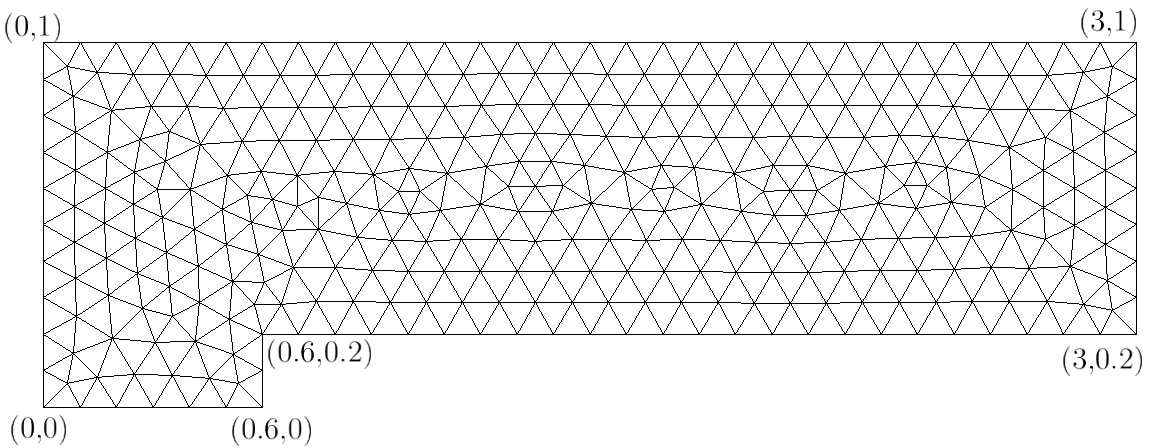}
		}%
		\subfigure[The first configuration at $t=6 $]{
			\label{fig:v9}
			\includegraphics[width=0.49\textwidth]{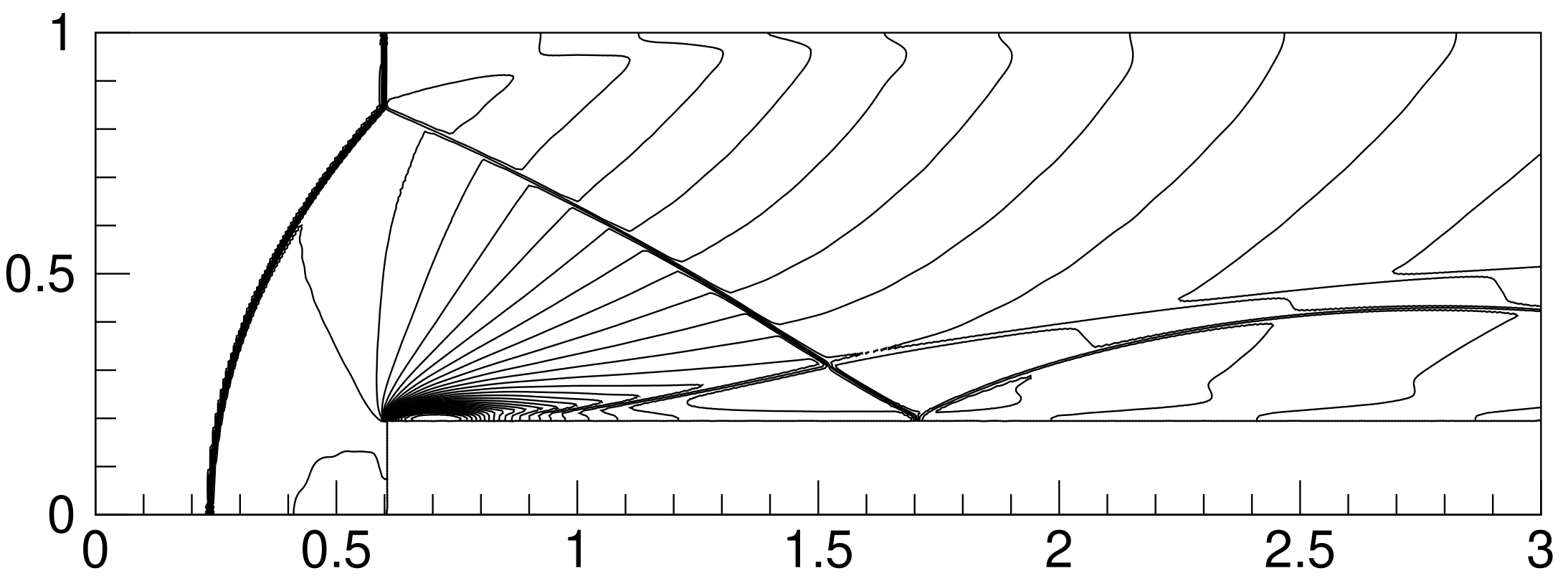}
		}
		
		\subfigure[The second configuration at $t=4.45 $]{
			\label{fig:v99}
			\includegraphics[width=0.49\textwidth]{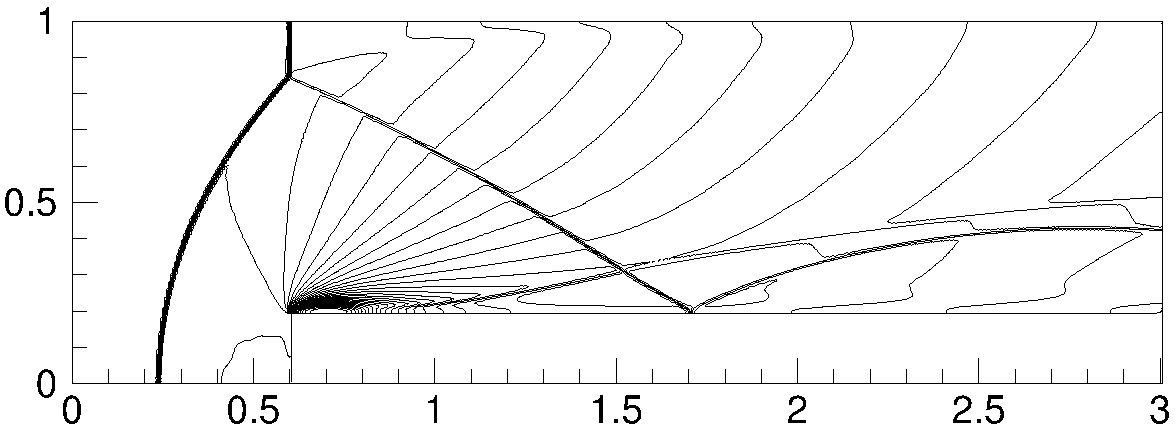}
		}
		\subfigure[The third configuration at $t=4 $]{
			\label{fig:v999}
			\includegraphics[width=0.49\textwidth]{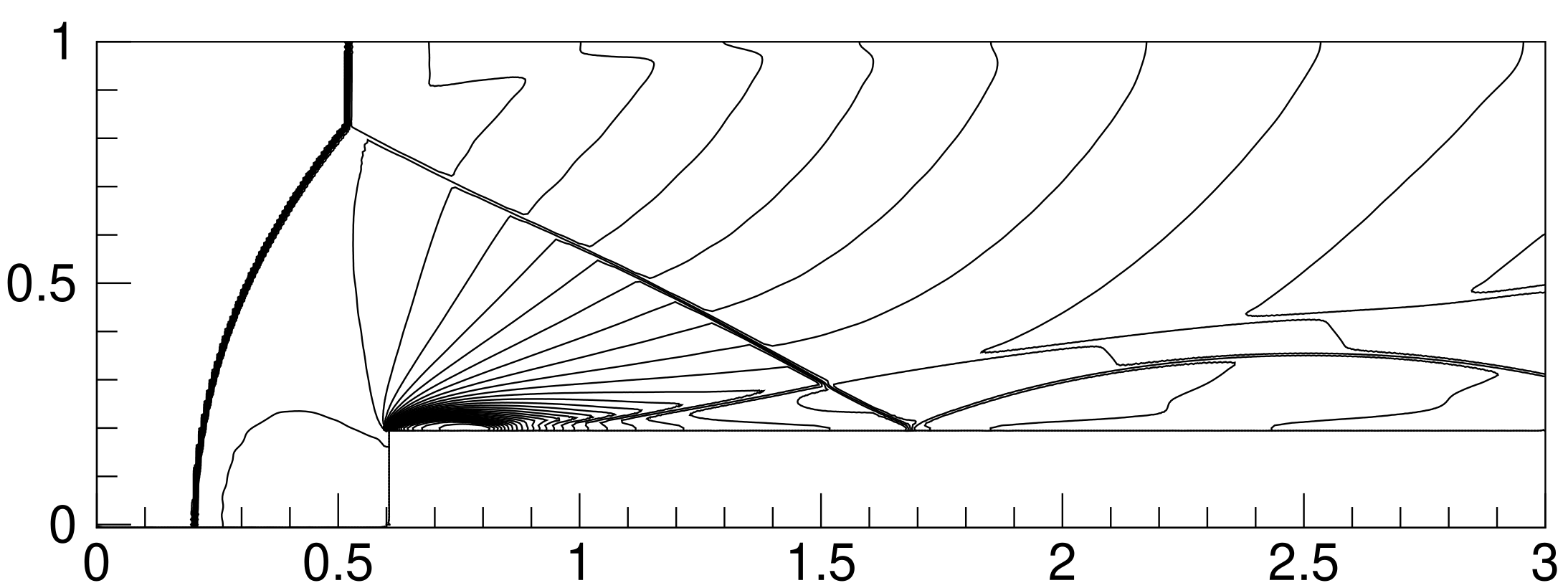}
		}
		\caption{Example \ref{forwardstep}: The logarithm of density with 30 equally spaced contours for three different configurations (namely $v_1=0.9,0.99$, and $0.999$) at the final time $t=6, 4.45$, and $4$, respectively. A triangular mesh with $h=1/160$ is used.}
		\label{fig:forwardstep}
	\end{figure}
\end{expl}

\begin{expl}[Shock–vortex interaction problems]\label{shockvortex}\rm
	The interaction of shock and vortex  has been widely studied in classic hydrodynamics (e.g., \cite{balsara2000}) and extended to the special RHD (see, e.g., \cite{BalsaraKim2016, duan2019high}).  
   We take the velocity magnitude of the vortex as $w=0.9$, and the other parameters in the rest frame are the same as those in Example \ref{ex:accurary}. 
	The computational domain $[-17,3]\times[-5,5]$ is displayed in \figref{fig:shockvortexmesh}  with a sample triangular mesh ($h=1/2$). 
		In our computations, we divide the domain 
	into a finer mesh with $h=1/40$. We take $\Gamma = 1.4$ in this example. 
	Initially, a vortex is centered at the point $(0,0)$, and there is a standing shock at $x=-6$ far away from the vortex. 
	The pre-shock gas with constant state $(\rho_0, {\bm v}_0,p_0)=(1,-0.9,0,1)$ flows into the shock from its right side. The post-shock state 
	is 
	\begin{equation*}
	(\rho_0, {\bm v}_0,p_0) = (4.891497310766981, -0.388882958251919, 0, 11.894863258311670).
	\end{equation*}
The outflow boundary condition is applied at the left $x=-17$, and the reflection boundary conditions are applied at both the bottom $y=-5$ and the top $y=5$ of the domain.  
We simulate two cases with different vortex intensities.

	\begin{figure}[htbp]
		\centering
		\subfigure{
			\includegraphics[width=0.7\textwidth]{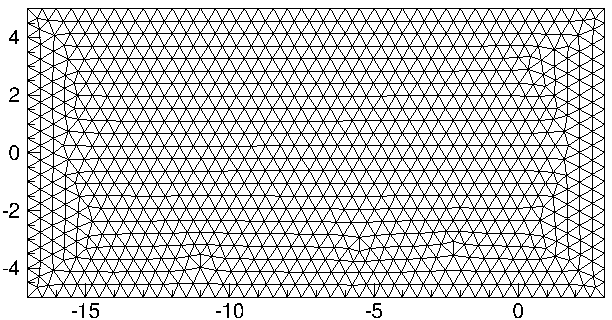}
		}
		\caption{A sample mesh with $h=1/2$. } 
		\label{fig:shockvortexmesh}
	\end{figure}	

	We first simulate the interaction of the shock with a 
	mild vortex with strength $\epsilon=5$, which is same as in \cite{duan2019high}. 
	 Fig.~\ref{fig:shockvortexcontourrho} shows 
	  the contour plots for 
	   $\log_{10}(1+|\nabla\rho|)$ and $|\nabla p|$.  
	   As seen from Figs.~\ref{fig:shockvortexcontourrho}\subref{fig:shockvortexrho0} and \ref{fig:shockvortexcontourrho}\subref{fig:shockvortexp0}, initially the vortex is elliptic due to the Lorentz contraction.  
	One can observe that when the vortex  passes through the standing shock wave, many complex wave structures are formed. 
	The proposed scheme is able to capture the shock-vortex interaction accurately, and our numerical results are consistent with those computed in \cite{duan2019high}.

In order to further demonstrate the PCP property and robustness of our method, we also investigate a more severe case with the vortex strength set as $\epsilon = 10.0828$. Our results are presented in Fig.~\ref{fig:shockvortexcontourlowrho}, from which we see that the wave structures are much more complicated than the first case with $\epsilon=5$. We remark that the proposed PCP limiter is highly desirable for this challenging test, because the pressure and density around the center of the vortex are very low. 
If the PCP limiter is turned off for this test, our high-order finite volume code would break down.
	
	\begin{figure}[htbp]
		\centering
		\subfigure[$\log_{10}(1+|\nabla\rho|)~ \text{at} ~t=0$]{
			\label{fig:shockvortexrho0}
			\includegraphics[width=0.45\textwidth]{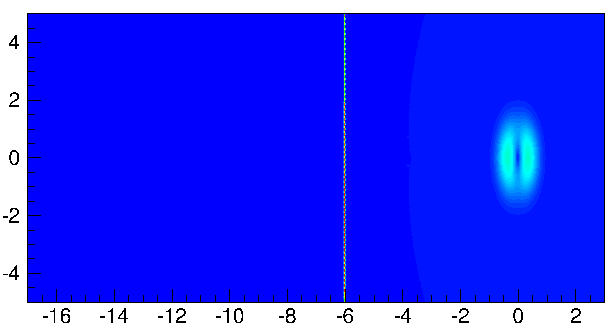}
		}
		\subfigure[$|\nabla p|~ \text{at} ~t=0$]{
			\label{fig:shockvortexp0}
			\includegraphics[width=0.45\textwidth]{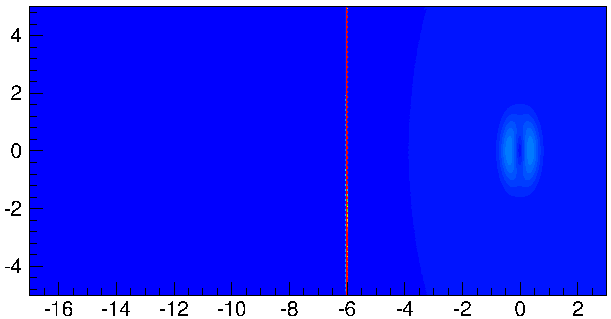}
		}
		\subfigure[$\log_{10}(1+|\nabla\rho|)~ \text{at} ~t=19$]{
			\label{fig:shockvortexrho19}
			\includegraphics[width=0.45\textwidth]{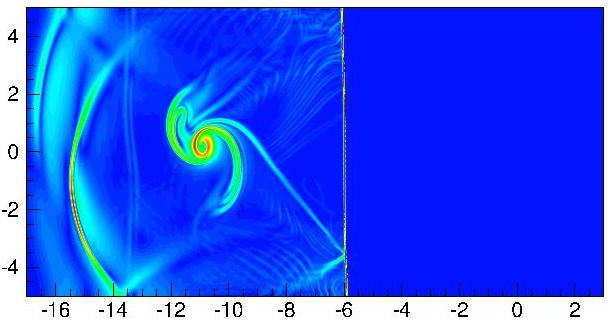}
		}
		\subfigure[$|\nabla p|~ \text{at} ~t=19$]{
			\label{fig:shockvortexp19}
			\includegraphics[width=0.45\textwidth]{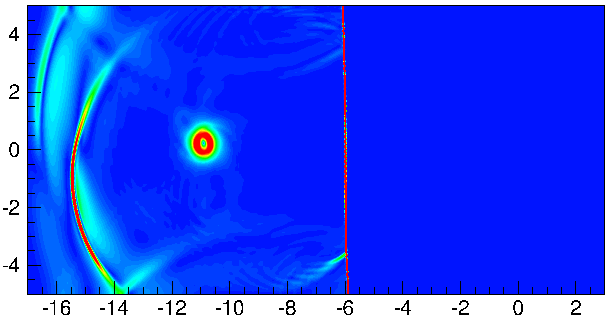}
		}
		\subfigure[Close-up~\text{of}~$\log_{10}(1+|\nabla\rho|)~ \text{at} ~t=19$]{
			\label{fig:shockvortexrhoup}
			\includegraphics[width=0.45\textwidth]{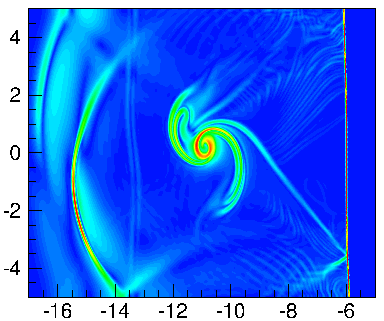}
		}
		\subfigure[Close-up~$\text{of}~|\nabla p|~ \text{at} ~t=19$]{
			\label{fig:shockvortexpup}
			\includegraphics[width=0.45\textwidth]{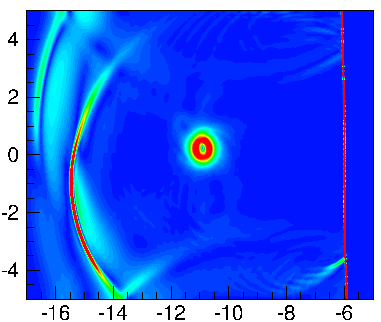}
		}
		\caption{Example \ref{shockvortex}: Snapshots has 50 contour lines equally distributed from 0 to 1 for $\log_{10}(1+|\nabla\rho|)$ (left) and  50 contour lines equally distributed from 0 to 20 for $|\nabla p|$  (right) with vortex strength $\epsilon=5$ at different time.  A triangular mesh with $h=1/40$ is used.
    	}
		\label{fig:shockvortexcontourrho}
	\end{figure}
	
	\begin{figure}[htbp]
		\centering
		
		\subfigure[$\log_{10}(1+|\nabla\rho|)~ \text{at} ~t=0$]{
			\label{fig:shockvortexlowrho0}
			\includegraphics[width=0.45\textwidth]{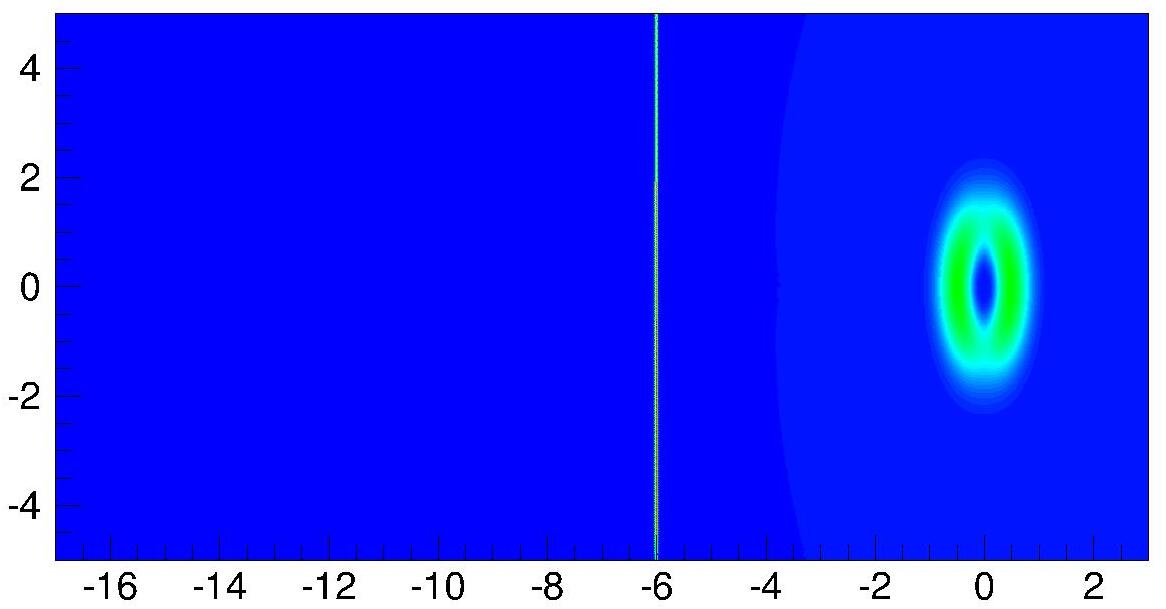}
		}
		\subfigure[$|\nabla p|~ \text{at} ~t=0$]{
			\label{fig:shockvortexlowp0}
			\includegraphics[width=0.45\textwidth]{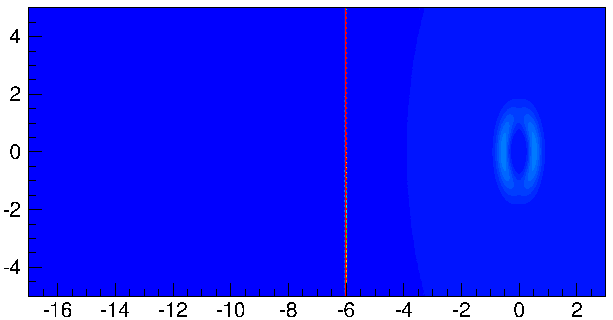}
		}
		\subfigure[$\log_{10}(1+|\nabla\rho|)~ \text{at} ~t=19$]{
			\label{fig:shockvortexlowrho19}
			\includegraphics[width=0.45\textwidth]{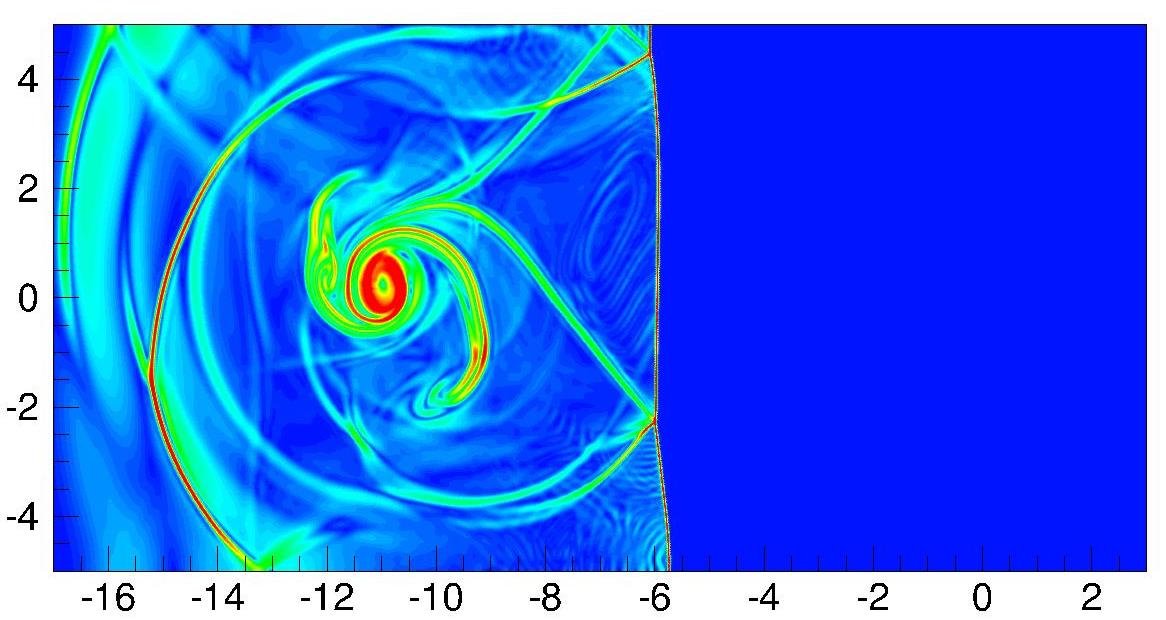}
		}
		\subfigure[$|\nabla p|~ \text{at} ~t=19$]{
			\label{fig:shockvortexlowp19}
			\includegraphics[width=0.45\textwidth]{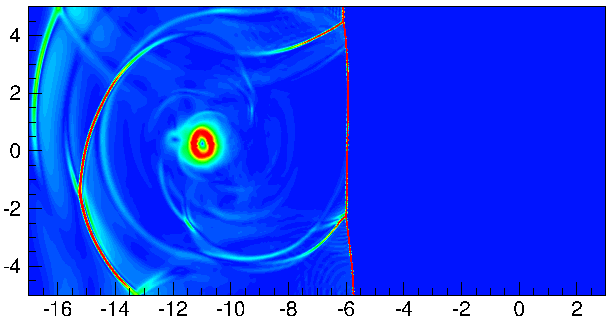}
		}
		\subfigure[Close-up~\text{of}~$\log_{10}(1+|\nabla\rho|)~ \text{at} ~t=19$]{
			\label{fig:shockvortexlowrhoup}
			\includegraphics[width=0.45\textwidth]{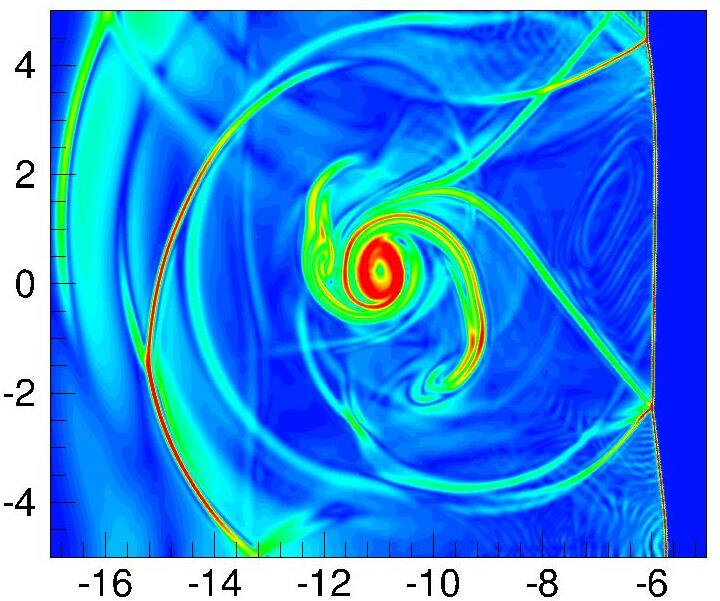}
		}
		\subfigure[Close-up~$\text{of}~|\nabla p|~ \text{at} ~t=19$]{
			\label{fig:shockvortexlowpup}
			\includegraphics[width=0.45\textwidth]{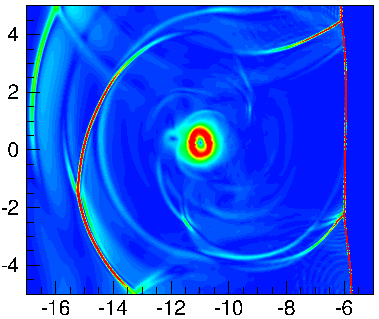}
		}
		\caption{Example \ref{shockvortex}: Snapshots has 50 contour lines equally distributed from 0 to 1 for $\log_{10}(1+|\nabla\rho|)$ (left) and  50 contour lines equally distributed from 0 to 20 for $|\nabla p|$  (right) with vortex strength $\epsilon=10.0828$ at different time.  A triangular mesh with $h=1/40$ is used.
		}
		\label{fig:shockvortexcontourlowrho}
	\end{figure}
	
\end{expl}

\begin{expl}[Shock diffraction problem]\label{shockdiffraction}\rm
	Shock wave diffraction at a sharp corner is a benchmark problem 
	to test numerical schemes on unstructured meshes in non-relativistic fluid dynamics; see, e.g., \cite{zhang2012,chen2017}. 
	It is interesting to extend this problem to the RHD case. 
	The computational domain is displayed in \figref{fig:shockdiffraction}\subref{fig:shockdiffractionmesh} 
    with a sample mesh ($h=1/10$).  
     Initially, along the segment $6\leq y \leq11$ at $x=3.4$, there is a shock wave with velocity of $0.8$. For the pre-shock regime, the undisturbed air has the density of $1.4$ and the pressure of $1$. 
     The post-shock state, which can be calculated by  the Rankine-Hugoniot jump condition and the Lax entropy condition, is 
     \begin{equation*}
     (\rho_0, {\bm v}_0,p_0) = (2.58962919872684,0.40445979062926, 0, 2.865544850466692).
     \end{equation*}
     Along the walls of the wedge, reflection boundary conditions are  applied. The inflow boundary condition is specified at 
     $\{ x=0, 6 \le y \le 11 \}$, and the outflow conditions are used on the right, the top, and the bottom boundaries.

	Figs.~\ref{fig:shockdiffraction}\subref{fig:shockdiffraction3rd}, \ref{fig:shockdiffraction}\subref{fig:shockdiffraction1st40}, and \ref{fig:shockdiffraction}\subref{fig:shockdiffraction1st160} show the contour map of the rest-mass density at $t = 8$. 
	We observe that
	a diffracted shock is generated, 
	and a vortex is produced near the wedge corner.  
	\figref{fig:shockdiffraction}\subref{fig:shockdiffraction3rd} gives 
	the numerical results computed by using our third-order PCP method 
	on the mesh with $h=1/40$. 
	For validation purpose, we also present in 
	\figref{fig:shockdiffraction}\subref{fig:shockdiffraction1st40} and \figref{fig:shockdiffraction}\subref{fig:shockdiffraction1st160}  the results obtained by the first-order HLL scheme (without spatial reconstruction) on two different meshes (with $h=1/40$ and $1/160$, respectively). 
	 As expected, our third-order method 
	 has better resolution than the first-order HLL scheme. The flow structures are correctly resolved by the proposed method, and are 
	 very close to those by the first-order HLL scheme on a much refined mesh ($h=1/160$).   

	\begin{figure}[htbp]
		\centering
		\subfigure[A sample mesh with $h=1/10$]{
			\label{fig:shockdiffractionmesh}
			\includegraphics[width=0.43\textwidth]{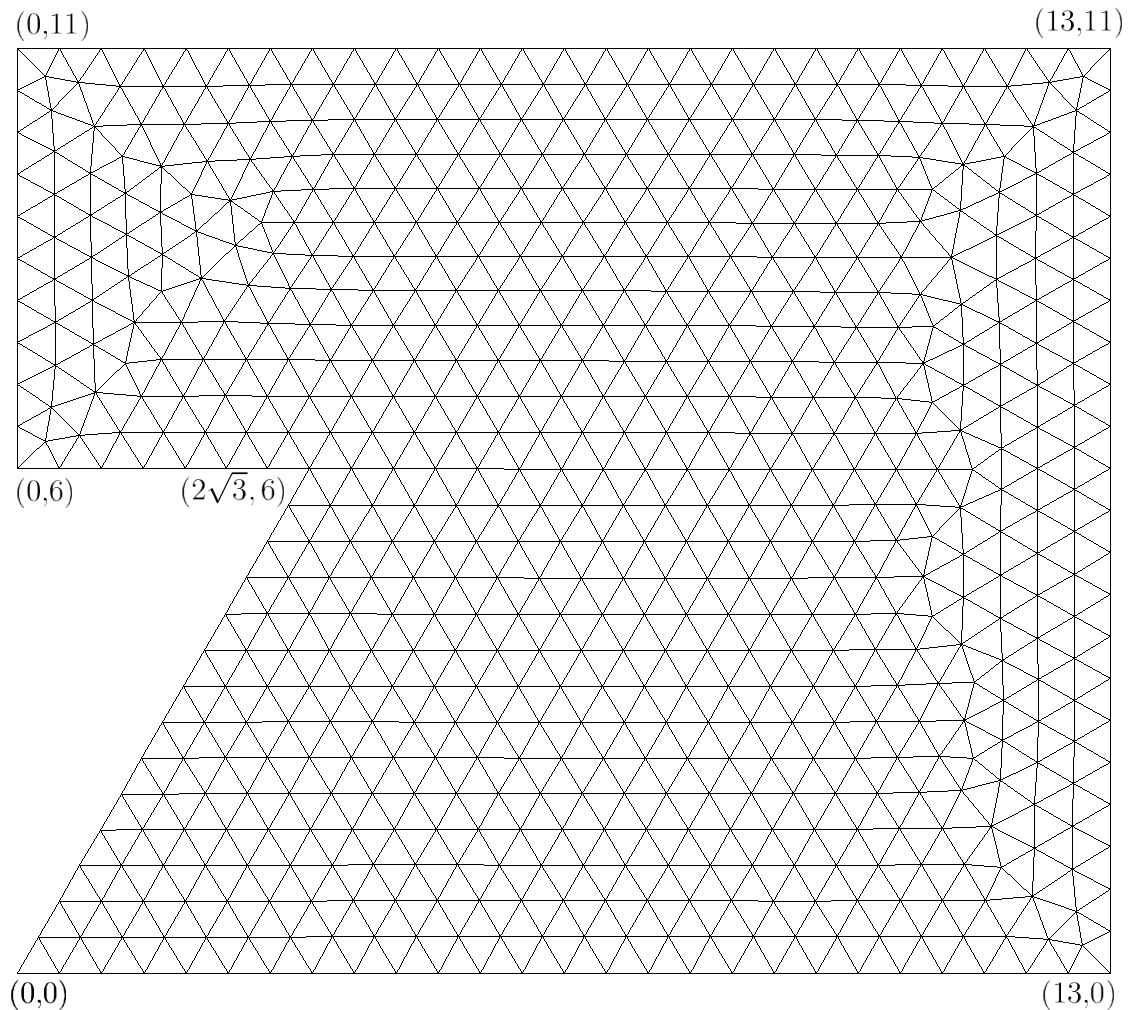}
		}
		\subfigure[Our scheme, $h=1/40$ ]{
			\label{fig:shockdiffraction3rd}
			\includegraphics[width=0.45\textwidth]{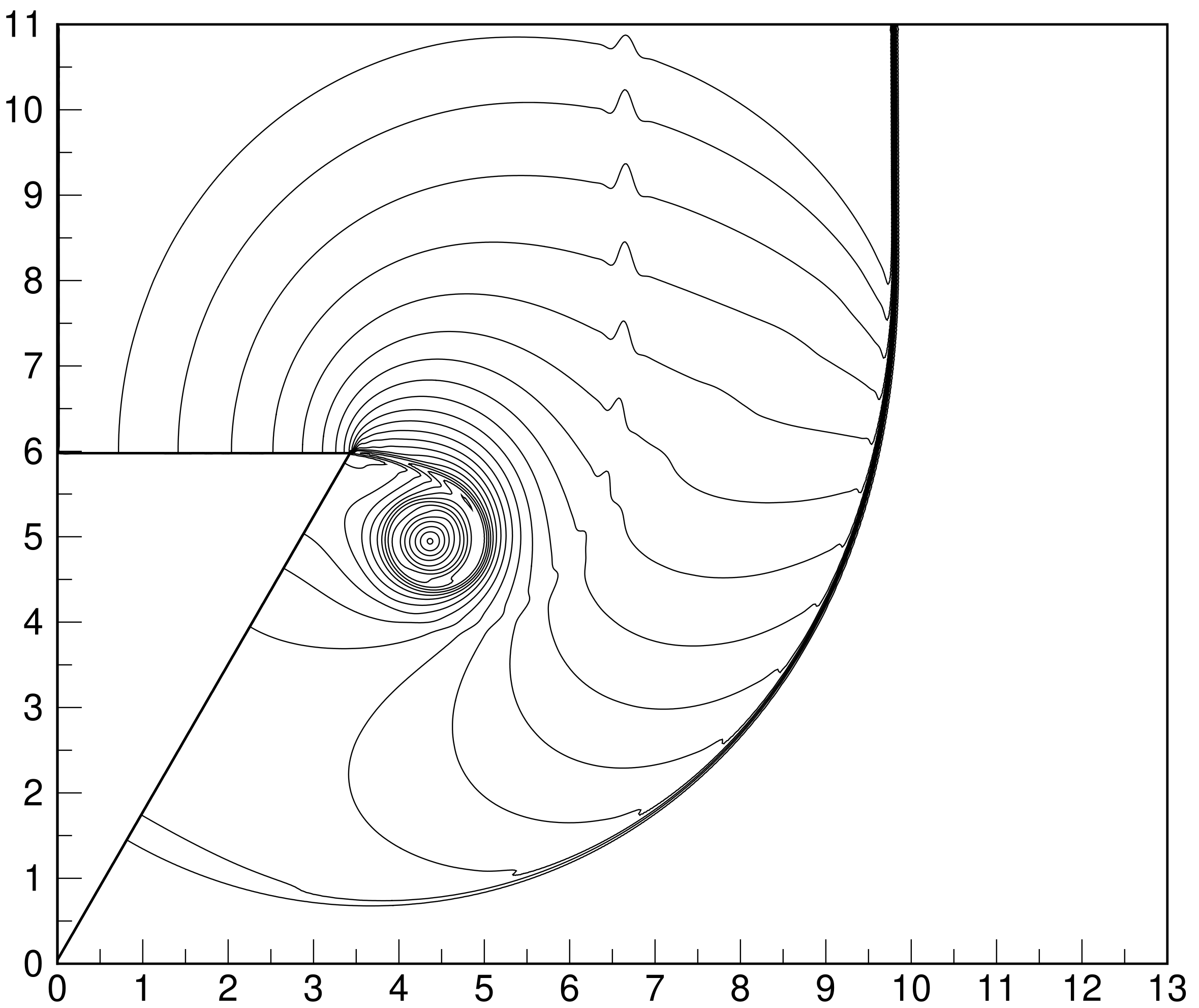}
		}
		\subfigure[First-order HLL scheme, $h=1/40$ ]{
			\label{fig:shockdiffraction1st40}
			\includegraphics[width=0.45\textwidth]{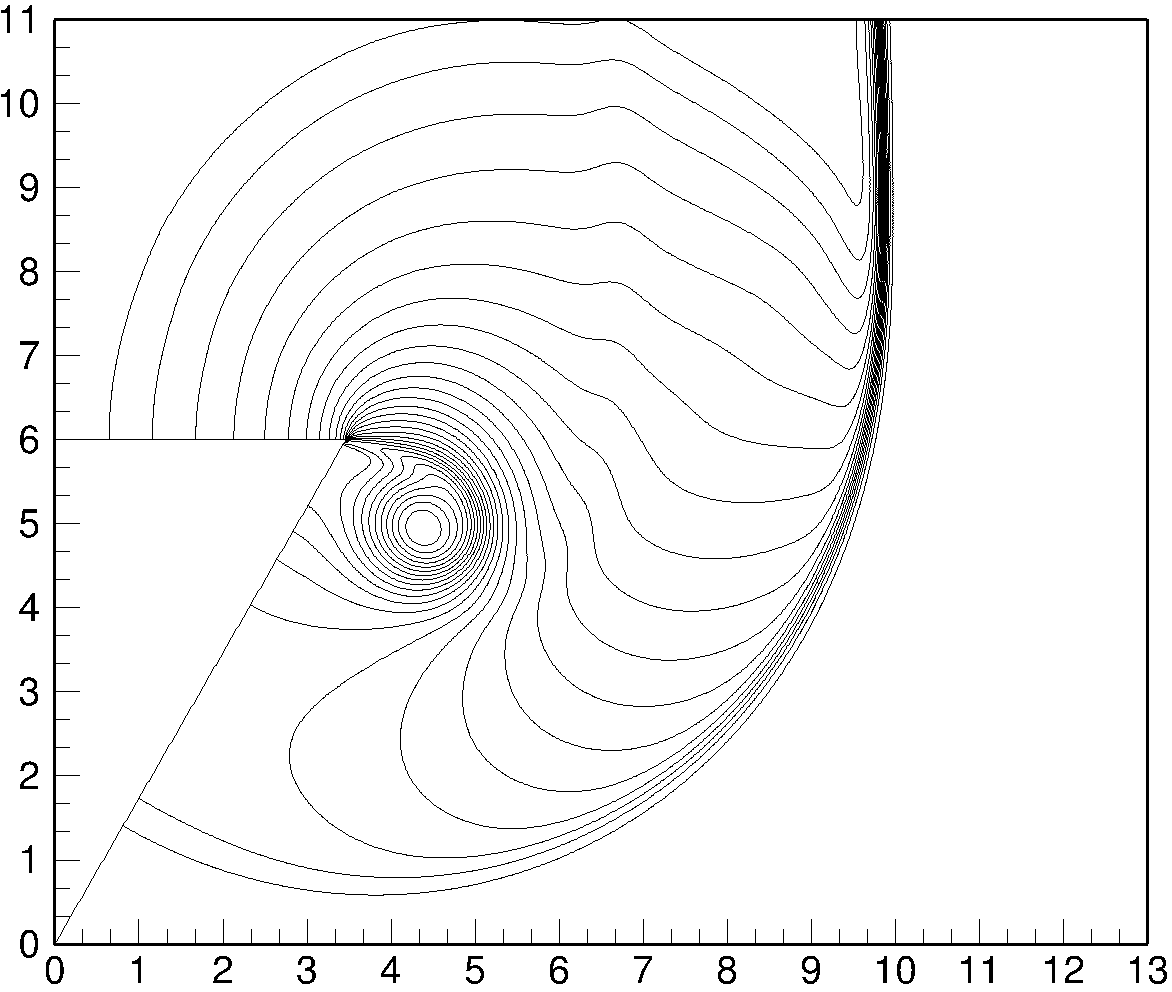}
		}
		\subfigure[First-order HLL scheme, $h=1/160$]{
			\label{fig:shockdiffraction1st160}
			\includegraphics[width=0.45\textwidth]{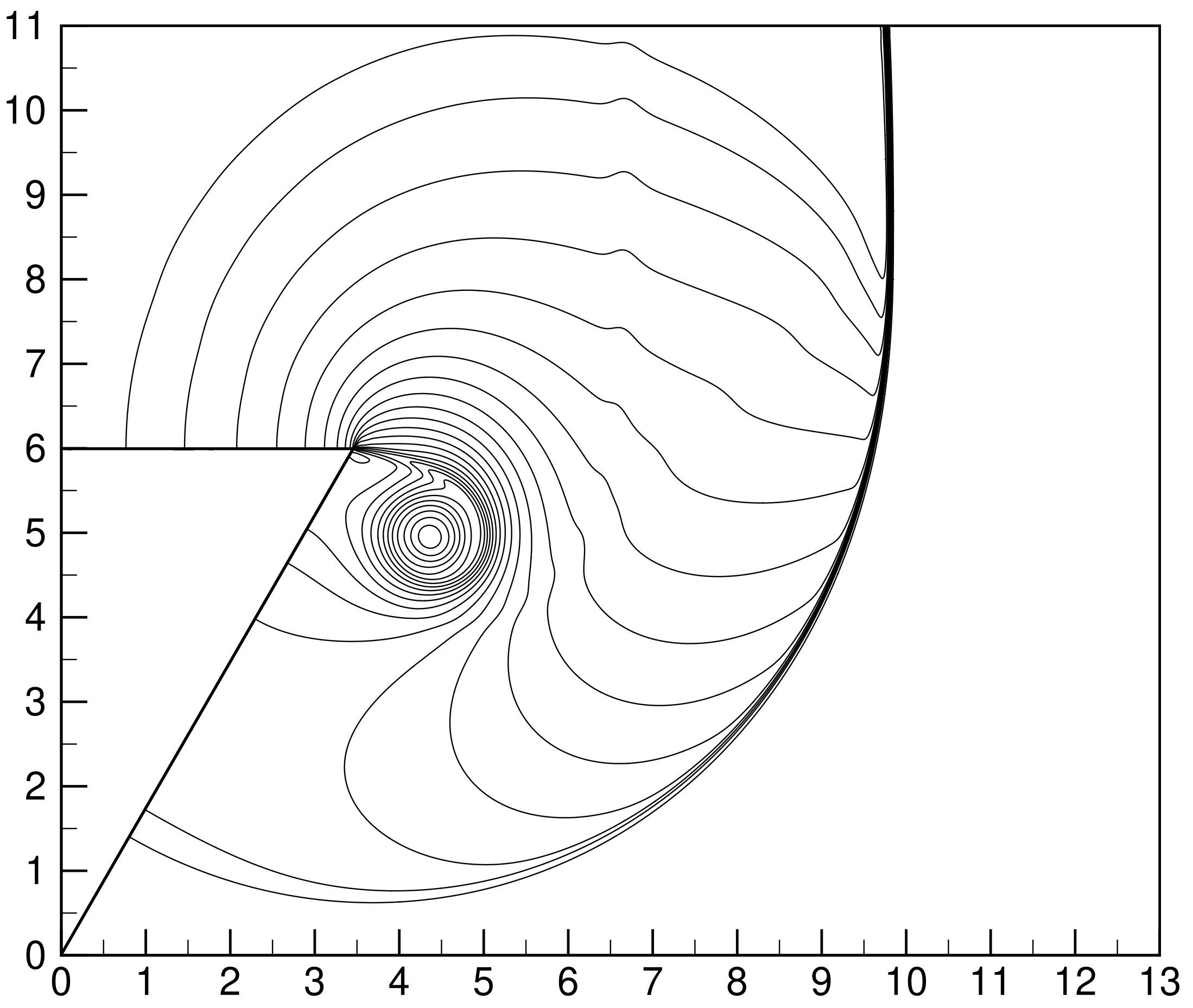}
		}
		\caption{Example \ref{shockdiffraction}: 30 equally spaced contour lines for density at $t=8$. 
		}
		\label{fig:shockdiffraction}
	\end{figure}
\end{expl}

	\begin{expl}[Axisymmetric relativistic jet]\label{jet}\rm
		In this example, we solve the axisymmetric RHD equations \eqref{eq:RHDcylind3D} 
		to simulate an axisymmetric relativistic jet, which is relevant to astrophysics and was well studied in, for example, \cite{marti1997morphology,2006Zhang,WuTang2015,QinShu2016}. The details of extending our scheme to the cylindrical RHD equations \eqref{eq:RHDcylind3D} have been discussed in Section \ref{sec:RHDAXIS}. We divide the computational domain $[0,15]\times[0,45]$ of the cylindrical coordinates $(r,z)$ into an unstructured triangular mesh with $h=1/25$. The initial states for the relativistic jet are
		\begin{eqnarray*}
			(\rho,v_z,v_r,p) = (1.0,0.0,0.0,1.70305\times 10^{-4}).
		\end{eqnarray*}
		A light jet beam is injected into the domain parallel to the axis of symmetry  (the $z$-axis) through the nozzle $(r\leq 1)$ of the bottom boundary $(z=0)$ with $\rho^b = 0.01, v_z^b = 0.99, v_r^b = 0$,  and $p^b = p$. Outflow boundary conditions are used on the domain boundaries, except at the symmetry axis ($r=0$ boundary) where the reflection conditions are imposed and at the nozzle where the inflow boundary conditions are imposed. The classic beam Mach number $M_b = v_z^b / c_s = 6$, and the corresponding relativistic Mach number $M_r:=M_bW_b/W_s$ is about $41.95$, 
		where $W_b=1/\sqrt{1-(v_z^b)^2}$ and $W_s=1/\sqrt{1-c^2_s}$ are respectively the Lorentz factors associated with the jet speed and the local sound speed. 
		
		\begin{figure}[h]\rm
			\centering
			\subfigure[$t=60$]{
				\label{fig:jet12}
				\includegraphics[width=0.3\textwidth]{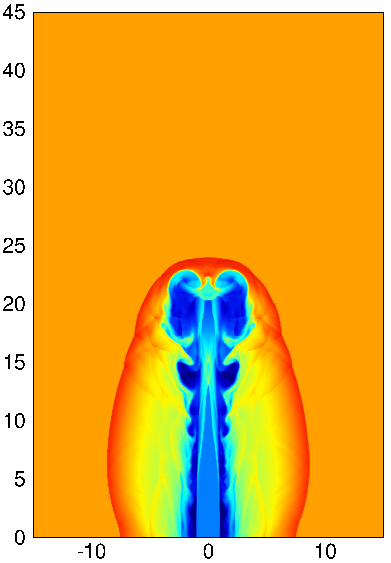}
			}
			\subfigure[$t=80$]{
				\label{fig:jet14}
				\includegraphics[width=0.3\textwidth]{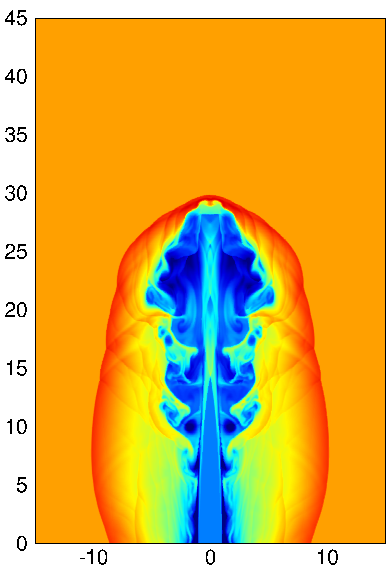}
			}
			\subfigure[$t=100$]{
				\label{fig:jet20}
				\includegraphics[width=0.3\textwidth]{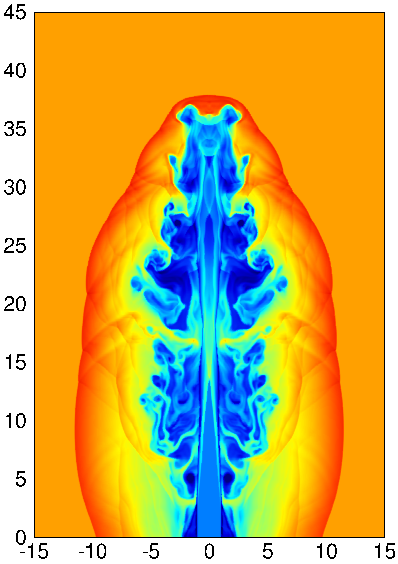}
			}
			\caption{Example \ref{jet}: The evolution of the rest-mass density logarithm $\ln \rho$ simulated by our scheme on a triangular mesh with $h=1/25$.}
			\label{3D_jet2}
		\end{figure}
		
		Fig.~\ref{3D_jet2} shows the schlieren images of the rest-mass density logarithm $\ln \rho$ at $t = 60, 80$, and $100$ obtained by our scheme. 
		As expected, we observe a bow shock formed at the jet head, and the Kelvin--Helmholtz instabilities develop. These typical jet flow structures are correctly resolved by our scheme in comparison with \cite{marti1997morphology,2006Zhang,tchekhovskoy2007wham,WuTang2015,QinShu2016}.  Moreover, there is no carbuncle in our result. 
		The proposed PCP limiter is also necessary for this challenging test: If the limiter is turned off, the evolved cell averages would violate the physical constraints, and the high-order finite volume code would break down within a few time steps.
	\end{expl}

\section{Conclusions}\label{sec:conclusions}
		In this paper, we have developed a third-order robust finite volume WENO method for the RHD equations on unstructured triangular meshes. 
		The method has two distinctive features: the provably PCP property 
		and a scaling-invariant property (homogeneity). 
		Due to the relativistic effects, the primitive quantities cannot be explicitly formulated in terms of the conservative variables, making the design and analysis of PCP schemes highly nontrivial. 
		Based on a novel quasilinear technique, 
		we have rigorously proven the PCP property of our method with the HLL flux. 
		In order to achieve high-order accuracy, we adopt the 
		efficient WENO reconstruction, recently proposed by Zhu and Qiu \cite{zhu2018new}. A modification to the nonlinear weights in the WENO method has been proposed, so that the modified nonlinear weights become scaling-invariant and that our method inherits the homogeneity of the exact evolution operator. Such scaling-invariance and homogeneity properties give our modified WENO method some advantages in resolving multi-scale wave structures. 
		We have also introduced three provable convergence-guaranteed iterative algorithms for the recovery of primitive quantities from admissible conservative variables. Extensive numerical experiments have demonstrated the robustness, accuracy, high resolution, the homogeneity, and the PCP property of the proposed method.

\begin{appendices}

\section{Eigenstructure of the rotated Jacobian matrix}\label{eigenstructure}
\begin{proposition}\label{prop111}
	If 	${\bf U}\in G_{u}$, then
	for any unit real vector ${\bf n}=(n_1,n_2)$, the Jacobian matrix $$\vec{A}_{\bf n}({\bf U}):= n_1 \frac{\partial\vec F_1(\vec{U})}{\partial {\bf U}} + n_2 \frac{\partial\vec F_2(\vec{U})}{\partial {\bf U} }$$
	has four real eigenvalues:
	\begin{align*}
		& \lambda_{\bf n}^{(1)} = \frac{v_{\bf n}(1-c_s^2)-c_s\gamma^{-1}\sqrt{1-v^2_{\bf n}-(\|{\bm v}\|^2-v^2_{\bf n})c^2_s}}{1-\|{\bm v}\|^2c^2_s},    \\
		& \lambda_{\bf n}^{(2)} = \lambda_{\bf n}^{(3)} = v_{\bf n},                                          \\
		& \lambda_{\bf n}^{(4)} = \frac{v_{\bf n}(1-c_s^2)+c_s\gamma^{-1}\sqrt{1-v^2_{\bf n}-(\|{\bm v}\|^2-v^2_{\bf n} )c^2_s}}{1-\|{\bm v}\|^2c^2_s},
	\end{align*}
	where $v_{\bf n} := \langle {\bm v}, {\bf n} \rangle = v_1 n_1 + v_2 n_2$ denotes the component of the fluid velocity in the direction of ${\bf n}$, and
	$c_s=\sqrt{\frac{\Gamma p}{\rho H}}$ denotes the sound speed for the ideal gas.
	The associated four real eigenvectors are
	\begin{align*}
		& {\bf R}_{\bf n}^{(1)}({\bf U})
		= \begin{pmatrix}
			\gamma ( v_{\bf n} \lambda_{\bf n}^{(1)} - 1 )
			\\
			H \gamma^2 \left(  \lambda_{\bf n}^{(1)} ( v_{\bf n} v_1 - n_1  ) + v_{\tau} n_2  \right)
			\\
			H \gamma^2 \left(  \lambda_{\bf n}^{(1)} ( v_{\bf n} v_2 - n_2  ) - v_{\tau} n_1  \right)
			\\
			H \gamma^2 ( v_{\bf n}^2 - 1 )
		\end{pmatrix},
		\qquad {\bf R}_{\bf n}^{(2)}({\bf U})
		= \begin{pmatrix}
			1
			\\
			\gamma v_1
			\\
			\gamma v_2
			\\
			\gamma
		\end{pmatrix}
		\\
		& {\bf R}_{\bf n}^{(4)}({\bf U})
		= \begin{pmatrix}
			\gamma ( v_{\bf n} \lambda_{\bf n}^{(4)} - 1 )
			\\
			H \gamma^2 \left(  \lambda_{\bf n}^{(4)} ( v_{\bf n} v_1 - n_1  ) + v_{\tau} n_2  \right)
			\\
			H \gamma^2 \left(  \lambda_{\bf n}^{(4)} ( v_{\bf n} v_2 - n_2  ) - v_{\tau} n_1  \right)
			\\
			H \gamma^2 ( v_{\bf n}^2 - 1 )
		\end{pmatrix},
		\qquad {\bf R}_{\bf n}^{(3)}({\bf U})
		= \begin{pmatrix}
			\gamma	v_\tau
			\\
			2H \gamma^2 v_\tau v_1 - H n_2
			\\
			2H \gamma^2 v_\tau v_2 + H n_1
			\\
			2H \gamma^2 v_\tau
		\end{pmatrix}
	\end{align*}
	with $v_\tau := v_2 n_1 - v_1 n_2$.
	The inverse of the right eigenvector matrix
	${\bf R}_{\bf n} ({\bf U}) := \left[ {\bf R}_{\bf n}^{(1)}, {\bf R}_{\bf n}^{(2)}, {\bf R}_{\bf n}^{(3)}, {\bf R}_{\bf n}^{(4)} \right]$ is given by
	\begin{equation*}
		{\bf R}_{\bf n}^{-1} ({\bf U}) =
		\begin{pmatrix}
			l_{1}^{(1)} & l_{2}^{(1)} & l_{3}^{(1)} & l_{4}^{(1)}
			\\
			\frac{1}{\rho c_s^2\eta}&
			\frac{v_{\bm n}(1+v_\tau^2\gamma^2)}{(1-v_{\bf n}^2)\gamma\rho Hc_s^2\eta} n_1
			- \frac{v_\tau\gamma}{\rho H c_s^2\eta} n_2
			&  	\frac{v_{\bm n}(1+v_\tau^2\gamma^2)}{(1-v_{\bf n}^2)\gamma\rho Hc_s^2\eta} n_2
			+ \frac{v_\tau\gamma}{\rho H c_s^2\eta} n_1
			&\frac{1+v_\tau^2\gamma^2}{(v_{\bf n}^2-1)\gamma\rho H c_s^2\eta}\\
			0
			&\frac{ -n_2 + v_{\bf n} v_2 }{H(1-v_{\bf n}^2)}
			&\frac{ n_1 - v_{\bf n} v_1 }{H(1-v_{\bf n}^2)}
			&\frac{v_\tau}{H(v_{\bf n}^2-1)}\\
			l_{1}^{(4)} & l_{2}^{(4)} & l_{3}^{(4)} & l_{4}^{(4)}
		\end{pmatrix},
	\end{equation*}
	where
	\begin{equation*}
		\begin{aligned}
			&l^{(z)}_1 = f_z \frac{v_{\bf n}  \lambda_{\bf n}^{(z)}-1}{\gamma(1+c^2_s\rho\eta)},\\
			&l^{(z)}_2 = f_z\left[\lambda_{\bf n}^{(z)}\left(\frac{ \|{\bm v}\|^2+\rho\eta}{1+c_s^2\rho\eta}n_1-v_\tau v_2 \right)-\frac{1+\rho\eta}{1+c^2_s\rho\eta} v_{\bf n} n_1 + v_\tau n_2  \right],\\
			&l^{(z)}_3 = f_z\left[\lambda_{\bf n}^{(z)}\left(\frac{ \|{\bm v}\|^2+\rho\eta}{1+c_s^2\rho\eta}n_2+v_\tau v_1 \right)-\frac{1+\rho\eta}{1+c^2_s\rho\eta} v_{\bf n} n_2 - v_\tau n_1  \right],\\
			&l^{(z)}_4 = f_z\left(v^2_\tau + \frac{1}{\gamma^2(1+c_s^2\rho\eta)}-v_{\bf n}(\lambda_{\bf n}^{(z)}-v_{\bf n})\frac{1+\rho\eta}{1+c_s^2\rho\eta}\right),
		\end{aligned}
	\end{equation*}
	with $\eta:=\frac{\partial e}{\partial p}=\frac{1}{ (\Gamma-1)\rho }$, and
	\begin{equation*}
		f_z = \frac{1+c_s^2\rho\eta}{2\rho \eta H(v_{\bf n}-\lambda_{\bf n}^{(z)})^2\gamma^2(v_{\bf n}^2+v_\tau^2c_s^2-1)},\qquad z=1, 4.
	\end{equation*}
\end{proposition}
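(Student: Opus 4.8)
The plan is to verify this eigenstructure by combining rotational invariance with a quasilinearization in the primitive variables, which circumvents the intractable task of differentiating the implicit flux $\vec F_i(\vec U)$ directly. First I would exploit the rotational symmetry of the RHD system: there is an orthogonal transformation $T(\vec n)$ acting on the momentum block of $\vec U$ (rotating $(m_1,m_2)$ so its first component aligns with $\vec n$) such that $\vec n\cdot\vec F(\vec U)=T^{-1}\vec F_1(T\vec U)$. Differentiating gives $\vec A_{\bf n}(\vec U)=T^{-1}\vec A_1(T\vec U)T$ with $\vec A_1:=\partial\vec F_1/\partial\vec U$, so the eigenvalues of $\vec A_{\bf n}$ coincide with those of $\vec A_1$ evaluated at the rotated state, while its eigenvectors are obtained by applying $T^{-1}$ to those of $\vec A_1$. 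This reduces everything to the one-dimensional case $\vec n=(1,0)$, after which the general formulas follow by rotating back and re-expressing the quantities in terms of $v_{\bf n}$, $v_\tau$, $n_1$, $n_2$.

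For the one-dimensional case I would avoid $\vec A_1$ in conservative form and instead pass to the primitive variables $\vec W=(\rho,{\bm v},p)^\top$. The map $\vec U=\vec U(\vec W)$ in \eqref{eq:UU} and the primitive-variable flux $\vec f_1(\vec W)$ are both explicit, so the Jacobians $\frac{\partial\vec U}{\partial\vec W}$ and $\frac{\partial\vec f_1}{\partial\vec W}$ can be written down by hand. Since $\vec A_1=\frac{\partial\vec f_1}{\partial\vec W}\big(\frac{\partial\vec U}{\partial\vec W}\big)^{-1}$, the matrix $\widetilde{\vec A}_1:=\big(\frac{\partial\vec U}{\partial\vec W}\big)^{-1}\vec A_1\frac{\partial\vec U}{\partial\vec W}=\big(\frac{\partial\vec U}{\partial\vec W}\big)^{-1}\frac{\partial\vec f_1}{\partial\vec W}$ is similar to $\vec A_1$ and hence shares its eigenvalues. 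I would then compute the characteristic polynomial of $\widetilde{\vec A}_1$ and show it factors as $(\lambda-v_1)^2$ times a quadratic whose two roots are $\lambda^{(1)}$ and $\lambda^{(4)}$; the relativistic sound speed $c_s=\sqrt{\Gamma p/(\rho H)}$ enters precisely through that quadratic, with the ideal-gas law \eqref{EOS} supplying $\eta=\partial e/\partial p=1/((\Gamma-1)\rho)$. Reality and distinctness of the acoustic roots follow from $\vec U\in G_u$: the discriminant equals $1-v_1^2-(\|{\bm v}\|^2-v_1^2)c_s^2\ge 1-\|{\bm v}\|^2>0$, using $\|{\bm v}\|<1$ together with $0<c_s<1$ (the latter a direct consequence of \eqref{EOS} and $\Gamma\le 2$, since $c_s^2=\Gamma p/(\rho+\tfrac{\Gamma}{\Gamma-1}p)<\Gamma-1\le 1$). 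This simultaneously establishes strict hyperbolicity.

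Next I would determine the right eigenvectors of $\widetilde{\vec A}_1$ in primitive variables, which are comparatively simple, map them to conservative variables via $\frac{\partial\vec U}{\partial\vec W}$, and apply $T^{-1}$ to obtain $\vec R_{\bf n}^{(k)}$ for general $\vec n$; I would check by inspection that the double eigenvalue $v_{\bf n}$ admits two independent eigenvectors, $\vec R_{\bf n}^{(2)}$ (entropy wave) and $\vec R_{\bf n}^{(3)}$ (shear wave), confirming diagonalizability. For the inverse matrix, rather than inverting the $4\times4$ eigenvector matrix symbolically, I would assemble the rows of $\vec R_{\bf n}^{-1}$ as the left eigenvectors (first in primitive variables, then transformed), normalized so that the biorthogonality relations hold; the factors $f_z$ and the entries $l_i^{(z)}$ are fixed exactly by this requirement. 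The proposition is then settled by directly verifying $\vec A_{\bf n}\vec R_{\bf n}^{(k)}=\lambda_{\bf n}^{(k)}\vec R_{\bf n}^{(k)}$ for each $k$ and $\vec R_{\bf n}^{-1}\vec R_{\bf n}=I$.

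The hard part will be the bookkeeping in this final verification rather than any conceptual step: the acoustic eigenvectors and, above all, the entries of $\vec R_{\bf n}^{-1}$ are heavy rational expressions in $(\rho,{\bm v},p)$, and confirming $\vec R_{\bf n}^{-1}\vec R_{\bf n}=I$ entry by entry requires repeated use of the defining relation $\gamma^2(\lambda_{\bf n}^{(z)}-v_{\bf n})^2(1-c_s^2)=(1-(\lambda_{\bf n}^{(z)})^2)c_s^2$ for the acoustic speeds (this is exactly the identity \eqref{key6681} used later in the proof of Lemma \ref{leam:keyIEQ}) to collapse the messy products. I would organize the computation by keeping $\lambda_{\bf n}^{(z)}$ symbolic and eliminating the square-root term through that quadratic relation, so that each matrix entry reduces to a rational function that simplifies to $0$ or $1$ as required.
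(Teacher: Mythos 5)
Your proposal is correct and takes essentially the same route as the paper: the paper's entire proof consists of reducing $\vec A_{\bf n}(\vec U)$ to the $x$-direction Jacobian $\partial\vec F_1/\partial\vec U$ via the rotational invariance of the RHD system \eqref{eq:RHD} and then citing \cite{zhao2014steger} for the one-dimensional eigenstructure. Your plan simply fills in the computation that the paper delegates to that reference---the primitive-variable quasilinearization of $\vec A_1$, the factored characteristic polynomial with the acoustic quadratic (equivalently \eqref{key6681}), and the biorthogonal normalization of the left eigenvectors---all of which is sound.
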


\begin{proof}
	The eigenvalues and eigenvectors of $\vec{A}_{\bf n}({\bf U})$ can be derived from 
	those of the matrix $\frac{\partial\vec F_1(\vec{U})}{\partial {\bf U}}$
	and  
	the 
	rotational invariance of the RHD system \eqref{eq:RHD}, by following \cite{zhao2014steger}.
\end{proof}

\end{appendices}

	\bibliographystyle{siamplain}
	\bibliography{references}

\end{document}